\documentclass[11pt,a4paper]{amsart}
\usepackage{amsmath,amssymb,color,multicol,setspace}
\usepackage[utf8,utf8x]{inputenc}

\usepackage{longtable}
\usepackage{xy,amscd} 

\usepackage{mathdots}
\usepackage{epsfig}
\usepackage{rotating}
\usepackage{xspace}
\usepackage{float}
\usepackage{enumitem} 
\usepackage{comment}
\xyoption{all}
\usepackage{tikz}
\usetikzlibrary{arrows,decorations.pathmorphing,decorations.pathreplacing,positioning,shapes.geometric,shapes.misc,decorations.markings,decorations.fractals,calc,patterns}

\newcommand{\red}{\color{red}}

\newtheorem{Lemma}{Lemma}[section]
\newtheorem{Theorem}[Lemma]{Theorem}
\newtheorem{Proposition}[Lemma]{Proposition}
\newtheorem{Corollary}[Lemma]{Corollary}

\theoremstyle{definition}
\newtheorem{Definition}[Lemma]{Definition}

\newtheorem{Remark}[Lemma]{Remark}

\newtheorem{Example}[Lemma]{Example}

\newtheorem{Algorithm}[Lemma]{Algorithm}

\numberwithin{equation}{section}

\newcommand{\id }{\mathrm{id}}

\DeclareMathOperator{\Hom}{Hom}

\DeclareMathOperator{\Dih}{Dih}

\newcommand{\re }{^\mathrm{re}}
\newcommand{\rer }[1]{(R\re)^{#1}}
\newcommand{\rsC }{\mathcal{R}}
\newcommand{\rfl }{\rho }
\newcommand{\Cm }{C}

\newcommand{\Cc }{\mathcal{C}}
\newcommand{\cm }{c}
\newcommand{\s }{\sigma }
\newcommand{\Wg }{\mathcal{W}}
\newcommand{\Ob }{\mathrm{Ob}}

\newcommand{\qs}{q}

\newcommand{\Ri}{D_p}

\title[Finite and infinite frieze patterns from $p$-angulations]
{Finite and infinite frieze patterns from $p$-angulations and a generalization of Weyl
groupoids}

\author{Michael Cuntz}
\address{Michael Cuntz, Leibniz Universit\"at Hannover,
Institut f\"ur Algebra, Zahlentheorie und Diskrete Mathematik,
Fakult\"at f\"ur Mathematik und Physik,
Welfengarten 1,
30167 Hannover, Germany}
\email{cuntz@math.uni-hannover.de}
\urladdr{http://www.iazd.uni-hannover.de/de/cuntz}

\author{Thorsten Holm}
\address{Thorsten Holm, Leibniz Universit\"at Hannover,
Institut f\"ur Algebra, Zahlentheorie und Diskrete Mathematik,
Fakult\"at f\"ur Mathematik und Physik,
Welfengarten 1,
30167 Hannover, Germany}
\email{holm@math.uni-hannover.de}
\urladdr{http://www.iazd.uni-hannover.de/de/holm}

\author{Peter J{\o}rgensen}
\address{Peter J{\o}rgensen, 
Department of Mathematics,
Aarhus University,
Ny Munkegade 118,
8000 Aarhus C, Denmark}
\email{peter.jorgensen@math.au.dk}
\urladdr{https://sites.google.com/view/peterjorgensen}

\keywords{Cartan matrix, Cartan graph, 
Chebyshev polynomial,
Dissection, Frieze pattern, Infinite frieze pattern,
Polygon, Weyl groupoid}

\subjclass[2020]{05E99, 13F60, 17B22, 
18B40, 51M20, 52B45}

\begin{document}

\begin{abstract}
A classic result of Conway and Coxeter on frieze patterns has recently been generalized 
by establishing for every integer $p\ge 3$ 
a bijection between $p$-angulations of regular polygons and a new class of frieze patterns, called frieze
patterns of type $\Lambda_p$.   
One of the features of Conway-Coxeter theory is a combinatorial procedure to obtain from the 
triangulation all entries of the corresponding frieze pattern. We first show how to extend this to dissections 
and to frieze patterns of type $\Lambda_p$, that is, we present a combinatorial algorithm, involving
Chebyshev polynomials, for obtaining
from a dissection all entries of the corresponding frieze pattern.  
As an application we obtain an explicit characterisation of frieze patterns of types $\Lambda_4$ and
$\Lambda_6$ in terms of all entries (not only the quiddity cycle), partially answering a question 
in an earlier paper. 
We then study infinite frieze patterns of type $\Lambda_p$, which appeared in a preprint by Banaian and Chen, 
generalizing the infinite frieze patterns 
of positive integers studied by Baur, Parsons and Tschabold. As our main result we obtain a 
combinatorial model for infinite frieze patterns of type $\Lambda_p$, that is, we show that these are 
in bijection with certain $p$-angulations of an infinite strip. This extends results by 
Baur, Parsons and Tschabold from $p=3$ to arbitrary $p\ge 3$, but we provide new insight even in the classic 
case in that our combinatorial model gives a bijection and also a more transparent constructive approach.
It is known that infinite frieze patterns 
of positive integers naturally appear in the
context of Weyl groupoids. In the final section 
we extend this connection to infinite frieze 
patterns of type $\Lambda_p$ for any $p\ge 3$ 
by introducing a generalization of Cartan 
graphs and Weyl 
groupoids called reflection groupoids. 
We show that, up to equivalence, there is a 1-1 correspondence 
between connected simply connected Cartan graphs of 
type $\Lambda_p$ of rank two with infinitely many vertices permitting a root system and 
infinite frieze patterns of type $\Lambda_p$.
\end{abstract}

\maketitle

\section{Introduction}

Triangulations and, more generally, dissections of polygons are ubiquitous objects in mathematics. One instance 
where triangulations appear is the theory of frieze patterns, where a classic result of Conway and Coxeter 
\cite{CC73} yields a bijection between triangulations of regular polygons and frieze patterns over positive
integers. Only recently, 
this beautiful theory could be generalized from triangulations to dissections of polygons: In \cite{HJ17} we show that
for every natural number $p\ge 3$ there is a bijection between $p$-angulations of polygons and a new class of frieze
patterns,
called frieze patterns of type $\Lambda_p$. These frieze patterns have entries in the ring of integers of the algebraic number field
$\mathbb{Q}(2\cos(\frac{\pi}{p}))$. 
In our paper \cite{HJ17} we actually show more: for every dissection (not only for $p$-angulations) one can 
naturally associate a frieze pattern. Namely, given a dissection, say with subpolygons of sizes $q_1,\ldots,q_r$, 
we label each $q_i$-gon by $\lambda_{q_i}=2\cos(\frac{\pi}{q_i})$. At each vertex of the polygon we then form the 
sum of the labels of the subpolygons attached to this vertex. Then the series of these sums forms the first
non-trivial row of the corresponding frieze pattern. This first row actually determines the entire frieze pattern
since the frieze patterns are shown to have no entries equal to zero. 

The combinatorial procedure described above only determines the entries in the first non-trivial row. For the classic
Conway-Coxeter frieze patterns there is also a nice combinatorial method to determine all entries of the corresponding
frieze patterns directly from the triangulation, see for instance \cite{BCI74}. 
As a useful tool we establish a combinatorial algorithm for producing all entries of the frieze 
patterns corresponding to arbitrary dissections. This is done in Section \ref{sec:algorithm}, see in particular
Algorithm \ref{alg:frieze}. This procedure needs a variation of the well-known Chebyshev polynomials 
of the second kind. We introduce these polynomials in Section \ref{sec:chebyshev} and recall some of their
fundamental properties needed later in the paper. 

In Section \ref{sec:lambda46} we apply the combinatorial algorithm from Section \ref{sec:algorithm}
to give a partial answer to a fundamental question posed in \cite{HJ17}, namely whether there is 
a useful characterisation of all entries of frieze patterns of type $\Lambda_p$. In Theorem \ref{thm:lambda46} 
we give such an answer for the cases $p=4$ and $p=6$, extending earlier work by Andritsch \cite{A20}. 
\medskip

\noindent
{\bf Theorem \ref{thm:lambda46}.} 
{\em Let $\mathcal{F}$ be a frieze pattern. Then the following statements are equivalent:
\begin{enumerate}
\item[{(a)}] $\mathcal{F}$ is a frieze pattern of type $\Lambda_4$ or $\Lambda_6$.
\item[{(b)}] The entries of $\mathcal{F}$ satisfy the following two conditions. 
\begin{itemize}
\item[{(i)}] The odd-numbered rows consist of positive integers.
\item[{(ii)}] The even-numbered rows consist of positive integral multiples of $\sqrt{2}$ (for type $\Lambda_4$)
or $\sqrt{3}$ (for type $\Lambda_6$), respectively.
\end{itemize}
\end{enumerate}
}

\medskip

In Sections \ref{sec:infiniteLambdap} and \ref{sec:combmodel} we introduce and study infinite
frieze patterns of type $\Lambda_p$. Infinite frieze patterns of positive integers have been introduced 
by Tschabold \cite{T15} and studied further by Baur, Parsons, Tschabold \cite{BPT16}. Infinite frieze patterns 
of type $\Lambda_p$ appeared already in the preprint by Banaian and Chen \cite{BC21}, where 
more generally infinite frieze patterns for dissections have been considered. In all this earlier work,
the most important goal has not been achieved, namely finding a bijection between these infinite 
frieze patterns and a combinatorial model (in the spirit 
of the classic Conway-Coxeter bijection between frieze patterns of positive integers and triangulations 
of polygons). Even for the classic case $p=3$, the model presented in \cite{BPT16} does not give 
a bijection. Our main result in Section \ref{sec:combmodel} provides such a bijection for 
infinite frieze patterns of type $\Lambda_p$, for any $p\ge 3$. 
\medskip

\noindent
{\bf Theorem \ref{thm:bijection}.}
{\em Let $p\ge 3$. There is a bijection between infinite frieze patterns of type $\Lambda_p$ and
locally finite 
$p$-angulations of infinite strips (with vertices $\mathbb{Z}$ on the lower line and vertices
from a subset $S\subseteq \mathbb{Z}$ on the upper line), up to translational renumbering 
of the vertices on the upper line. All entries of the infinite frieze pattern corresponding to a
$p$-angulation can be computed with Algorithm \ref{alg:frieze}. 
}
 \medskip
 
This main theorem and the results obtained along the way generalize results on infinite frieze
patterns over positive integers to infinite frieze patterns of type $\Lambda_p$ 
and provide new insight even in the classic case $p=3$. In particular, starting from an infinite 
frieze pattern of type $\Lambda_p$ we obtain an explicit
construction for the corresponding $p$-angulation of an infinite strip.  

As one consequence we obtain bounds for the entries of infinite frieze patterns of type $\Lambda_p$
(for finite frieze patterns of type $\Lambda_p$ this is a consequence of Algorithm 4.1, see Corollary 
\ref{cor:ge1}). 
\medskip

\noindent
{\bf Corollary \ref{cor:infge1}.}
{\em Let $\mathcal{F}$ be an infinite frieze pattern of type $\Lambda_p$. Then the entries of $\mathcal{F}$
belong to the set $\{1\}\cup [\lambda_p,\infty)$. 
}
\medskip

Note that this is obvious for $p=3$ (as $\lambda_3=1$), but a non-trivial statement for $p>3$. 
\medskip

As another consequence we get the analog of Theorem \ref{thm:lambda46} also for infinite frieze patterns
of type $\Lambda_4$ and $\Lambda_6$ in Corollary \ref{cor:inflambda46}.
\medskip

Weyl groupoids were first introduced under the name of arithmetic root systems by Heckenberger \cite{p-H-06}, \cite{p-H-09} in order to classify finite dimensional Nichols algebras of diagonal type. Understanding Nichols algebras is a step within the classification of pointed Hopf algebras proposed by Andruskiewitsch and Schneider \cite{AS02}.
After a system of axioms for Weyl groupoids was proposed in \cite{p-HY08} and \cite{p-CH09a}, Heckenberger and the first author gave a complete classification of finite Weyl groupoids in all ranks \cite{CH15}.

The definition of Weyl groupoids uses 
Cartan graphs, and one feature of these is
that Cartan matrices (known from Lie theory)
are attached to the vertices. 
It is known that Weyl groupoids of rank 2 
(admitting a root system) are closely related to 
finite and infinite frieze patterns of positive 
integers. In the final section of this article 
we generalize this to frieze patterns of type
$\Lambda_p$ for any integer $p\ge 3$. To this end,
we introduce generalized Cartan matrices of type
$\Lambda_p$ and from this Cartan graphs of
type $\Lambda_p$ (see Definition \ref{def:cartangraph}). This leads us 
to the new notion of reflection groupoids, 
which generalizes Weyl groupoids from $p=3$ to
arbitrary $p\ge 3$. As our main result in 
Section \ref{sec:groupoids} get a bijection 
between infinite frieze patterns of type $\Lambda_p$
and certain Cartan graphs (or reflection groupoids):

\medskip

\noindent
{\bf Theorem \ref{thm:cartanfriezep}.}
{\em 
There is a 1-1 correspondence between connected simply connected Cartan graphs of type $\Lambda_p$ of rank two with infinitely many 
vertices permitting a root system
up to equivalence and (tame) infinite frieze patterns of type $\Lambda_p$ up to equivalence.
}

\section{Frieze patterns} \label{sec:friezes}

Frieze patterns have been introduced by Coxeter \cite{Cox71}. We recall the definition here. 

\begin{Definition} \label{def:frieze}
Let $m\ge 0$ be an integer. 
A {\em Coxeter frieze pattern} or just {\em frieze pattern}
of height $m$ consists of $m+2$ infinite horizontal rows of positive real numbers, with an
offset between neighbouring rows. The top and bottom rows consist only of 1's. The other rows contain
positive real numbers such that each 'diamond' of four neighbouring numbers\,\,\,\,
$\begin{matrix} & b & \\ a & & d \\ & c & \end{matrix}$\,\,\,
satisfies the 'unimodular rule' $ad-bc=1$. 

The second row is called the {\em quiddity row} of the frieze pattern. 
\smallskip

It is convenient to label the entries in a frieze pattern of height $m$ in the following way: 
{\small
$$\begin{array}{ccccccccccccccccc}
& & & & 1 & & 1 & & {\red 1} & & 1 & & 1 & & & & \\
& & & & & f_{-1,m} & & {\red f_{0,m+1}} & & {\red f_{1,m+2}} & & f_{2,m+3} & & & \\
\ldots & & & & \iddots & & {\red \iddots} & & {\red \iddots}  & & {\red \ddots} & & \ddots & & & & \ldots \\
 & & & \iddots & & {\red \iddots} & & {\red \iddots}  & & & & {\red \ddots} & & \ddots & & &  \\
& & f_{-1,2} & & {\red f_{0,3}} & & {\red f_{1,4}} & &  & & & & {\red f_{m-1,m+2}} & & f_{m,m+3} & & \\
& f_{-1,1} & & {\red f_{0,2}} & & {\red f_{1,3}} & & &  & & & & & {\red f_{m,m+2}} & & f_{m+1,m+3} & \\
1 & & {\red 1} & & {\red 1} & & {\red 1} & {\red \ldots} & & {\red \ldots} & & {\red \ldots} & {\red 1} & & {\red 1} & & 1\\
\end{array}
$$
}
\end{Definition}

\begin{Remark} \label{rem:frieze}
Frieze patterns can be defined with more general entries, not only for positive real numbers, but the above definition 
covers all the frieze patterns dealt with in this paper. Since the entries of our frieze patterns are non-zero, the quiddity 
row uniquely determines the entire frieze pattern, by repeated application of the unimodular rule. Frieze patterns with
non-zero entries have a glide symmetry (see \cite[Equation (6.1)]{Cox71}), 
a fundamental domain is given by a triangular region, see the 
red area in the above figure in Definition \ref{def:frieze}. 
\end{Remark}

\begin{Example} \label{ex:friezes}
We give some examples of frieze patterns of height 2. (We show a finite part, the frieze patterns
are repeated periodically with period 5.)

The first one is a frieze pattern with (positive) integral entries. 
\begin{figure}[H]
\begingroup
\[
  \vcenter{
  \xymatrix @-0.5pc @!0 {
& 1 & & 1 & & 1 & & 1 & & 1 & & 1 & & 1 \\
2 & & 1 & & 3 & & 1 & & 2 & & 2 & & 1 & & 3 \\
& 1 & & 2 & & 2 & & 1 & & 3 & & 1 & & 2 \\
1 & & 1 & & 1 & & 1 & & 1 & & 1 & & 1 & & 1 \\
        }
   }
\]
\endgroup
\end{figure}
The second example has entries in $\mathbb{Z}[\sqrt{2}]$. 
\begin{figure}[H]
\begingroup
\[
  \vcenter{
  \xymatrix @-0.1pc @!0 {
    & ~1~ & & 1 & & ~1~ & & 1 & & ~1~ & & 1 & & ~1~ \\
~\sqrt{2}~ & & ~\sqrt{2}~ & & ~1+\sqrt{2}~ & & ~1~ & & ~1+\sqrt{2}~ & & ~\sqrt{2}~ & & ~\sqrt{2}~ & & ~1+\sqrt{2}~ \\
& 1 & & 1+\sqrt{2} & & \sqrt{2} & & \sqrt{2} & & 1+\sqrt{2} & & 1 & & 1+\sqrt{2} \\
1 & & 1 & & 1 & & 1 & & 1 & & 1 & & 1 & & 1 \\
        }
   }
\]
\endgroup
\end{figure}
\end{Example}

A classic result of Conway and Coxeter \cite{CC73} shows that, for any $n\ge 3$, there is a bijection between triangulations
of a regular $n$-gon and frieze patterns of height $n-3$ with positive integral entries. Moreover, given a triangulation, the
quiddity row of the corresponding frieze pattern is obtained by counting at each vertex of the polygon
the number of triangles attached to the vertex.  
\smallskip

In recent work, we were able to generalize the classic Conway-Coxeter theory from triangulations to 
$p$-angulations for any $p\ge 3$ \cite{HJ17}. We briefly recall the relevant definitions and results here. 

For any natural number $p\ge 2$ let $\lambda_p=2\cos(\frac{\pi}{p})$. 
In particular, $\lambda_2=0$, $\lambda_3=1$, $\lambda_4=\sqrt{2}$,
$\lambda_5 =\frac{1+\sqrt{5}}{2}$, the golden ratio, $\lambda_6=\sqrt{3}$, etc. 

\begin{Definition}(\cite[Definition 0.2]{HJ17}) \label{def:lambdap}
Let $p\ge 3$ be an integer. A frieze pattern is {\em of type $\Lambda_p$} if the quiddity row consists of 
(necessarily positive) integral multiples of $\lambda_p$.
\end{Definition}

Note that the classic Conway-Coxeter frieze patterns corresponding to triangulations are precisely the 
frieze patterns of type $\Lambda_3$. 
Our generalization of the Conway-Coxeter result reads as follows.

\begin{Theorem}(\cite[Theorem A]{HJ17}) \label{thm:HJ-ThmA}
Let $p\ge 3$ and $n\ge 3$ be integers. There is a bijection between
$p$-angulations of a regular $n$-gon and frieze patterns of type $\Lambda_p$ with height $n-3$. 
\end{Theorem}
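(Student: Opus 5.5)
The plan is the classical Conway--Coxeter strategy, with triangles replaced by $p$-gons and the number $1$ replaced by $\lambda_p$ at the glueing steps. First construct the map from $p$-angulations to frieze patterns, then the inverse map, by induction on $n$.

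\emph{From $p$-angulations to friezes.} Given a $p$-angulation of the $n$-gon, put at each vertex $i$ the value $c_i=\lambda_p\cdot(\text{number of }p\text{-gons at }i)$. Let $\eta(c)=\begin{pmatrix} c & -1\\ 1 & 0\end{pmatrix}$. A sequence $(c_1,\dots,c_n)$ is the quiddity row of a frieze pattern of height $n-3$ with non-zero entries precisely when
\[ \eta(c_1)\cdots\eta(c_n)=-\mathrm{id} \]
and all the ``continuant'' entries $f_{i,j}$ with $i+1<j<i+n-1$ are non-zero (here positive). Indeed, the entries $f_{i,j}$ are the top-left entries of the partial products, and the unimodular rule is the $2\times 2$ determinant identity.

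For a single $p$-gon, $c_i=\lambda_p$ for all $i$. Then $\eta(\lambda_p)$ is conjugate to a rotation by $\pi/p$, so $\eta(\lambda_p)^p=-\mathrm{id}$. Moreover the entries are the Chebyshev-type values $\sin(k\pi/p)/\sin(\pi/p)>0$ for $1\le k\le p-1$, with value $1$ at the ends.

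Now glue along a diagonal: a $p$-gon is attached to an edge of an $(n-p+2)$-gon. The local relation $\eta(a)\,\eta(\lambda_p)^{p-2}\,\eta(b)=-\eta(a+\lambda_p)\,\eta(b+\lambda_p)$ shows that the product condition is preserved. This identity follows from $\eta(\lambda_p)^{p-2}=-\eta(\lambda_p)^{-2}$ and a direct computation. By induction, every $p$-angulation gives a quiddity cycle with product $-\mathrm{id}$. Positivity of all entries also follows by induction: each entry $f_{i,j}$ of the glued frieze is a combination of entries of the two pieces with nonnegative coefficients.

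\emph{From friezes to $p$-angulations.} This direction is the main obstacle. Given a frieze of type $\Lambda_p$ with quiddity row $c_i=a_i\lambda_p$ and height $n-3$, I would show the following.
\begin{itemize}
\item[(1)] For $n>p$ there exist $p-2$ consecutive vertices with $c_i=\lambda_p$ (an ``ear''), such that the entries $f_{i-1,i+p-2}=1$.
\item[(2)] Cutting off this ear, i.e.\ replacing the $p-2$ values by nothing and decreasing the two neighbours by $\lambda_p$, gives again a frieze of type $\Lambda_p$ with positive entries.
\end{itemize}
For (1), the first task is a small-value lemma: every entry of a frieze of type $\Lambda_p$ lies in the ring $\mathbb{Z}[\lambda_p]$, and the entries in row $k$ lie in a fixed coset determined by parity. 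The positive such values below a threshold are exactly $1$ and values $\ge\lambda_p$. Using this, I would show that some diagonal segment $f_{i,i+1},f_{i,i+2},\dots$ reproduces the Chebyshev values $1,\lambda_p,\dots,\sin(k\pi/p)/\sin(\pi/p),\dots$ until it reaches $1$ again. That $1$ is the diagonal cutting off a $p$-gon.

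Concretely, pick an entry equal to $1$ in an interior row, which exists by a counting argument analogous to the Conway--Coxeter existence of a quiddity entry $1$. I would attempt this via Galois conjugates: the product $-\mathrm{id}$ is preserved under the Galois action, and positivity of all conjugates fails unless the structure is a genuine $p$-angulation. I expect this existence-of-a-minimal-entry step to be the hardest part. Once the ear exists, the inverse of the gluing identity shows that the reduced sequence still multiplies to $-\mathrm{id}$. The entries of the smaller frieze are a subset of the original entries, hence positive.

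Induction then yields a $p$-angulation with the given quiddity row. The two constructions are mutually inverse because each frieze is determined by its quiddity row (Remark~\ref{rem:frieze}), and a $p$-angulation is determined by its vertex counts. The latter also follows by ear-removal induction, since the ears are detected by runs of $c_i=\lambda_p$.
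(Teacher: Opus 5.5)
The paper does not prove this statement. It quotes it from \cite[Theorem A]{HJ17} and uses it as a black box, so your argument has to stand on its own.

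\textbf{Forward direction.} This part is essentially sound; positivity via nonnegative combinations is exactly Step (iii) of Algorithm~\ref{alg:frieze}. However, your gluing relation is misstated: as written it already fails for $p=3$, $a=b=0$, where the two sides differ in the $(2,2)$ entry. The relation that attaching a $p$-gon to an edge with end values $a,b$ actually requires has no sign:
\[
\eta(a+\lambda_p)\,\eta(\lambda_p)^{p-2}\,\eta(b+\lambda_p)=\eta(a)\,\eta(b).
\]
This one does follow from $\eta(\lambda_p)^{p-2}=-\eta(\lambda_p)^{-2}$. Your ear-removal step (2) is also fine.

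\textbf{The gap is step (1).} The existence of an ear is the whole content of the converse, and none of the tools you propose can deliver it.
\begin{itemize}
\item The ``small-value lemma'' is false as an arithmetic statement for $p\notin\{3,4,6\}$. The entries of a row lie in $\mathbb{Z}[\lambda_p^2]$ or in $\lambda_p\mathbb{Z}[\lambda_p^2]$, and $\mathbb{Z}[\lambda_p^2]$ is then dense in $\mathbb{R}$; for $p=5$, $\lambda_5^2-2=\lambda_5-1\approx 0.618$. So no integrality or coset consideration excludes values in $(0,1)\cup(1,\lambda_p)$.
\item The true statement that entries lie in $\{1\}\cup[\lambda_p,\infty)$ is Corollary~\ref{cor:ge1}. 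It is derived from the combinatorial model, so using it here would be circular.
\item The Galois idea has no content. Conjugates of friezes coming from genuine $p$-angulations are not positive either: for $p=5$, $\lambda_5\mapsto -1/\lambda_5$ makes the whole quiddity row negative.
\item An arbitrary interior entry $1$ is not what you need. You need a $1$ sitting over $p-2$ consecutive quiddity entries equal to $\lambda_p$.
\end{itemize}

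\textbf{What is missing} is a growth argument that combines positivity with $c_i\in\mathbb{Z}_{>0}\lambda_p$, generalising the Conway--Coxeter one. Here is one way to do it.
\begin{itemize}
\item Fix $i$ and put $d_k=f_{i,i+k}$. Then $d_0=0$, $d_1=1$, $d_{k+1}=c_{i+k}d_k-d_{k-1}$, $d_k>0$ for $1\le k\le n-1$, and $d_n=0$.
\item Set $r_k=d_{k+1}/d_k$. Then $r_0=\infty$, $r_k=c_{i+k}-1/r_{k-1}$, and $r_{n-1}=0$.
\item Let $x_k=V_k(\lambda_p)/V_{k-1}(\lambda_p)$. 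These decrease from $x_0=\infty$ through $x_1=\lambda_p$ to $x_{p-2}=1/\lambda_p$ and $x_{p-1}=0$.
\item For $0\le k\le p-2$, the map $r\mapsto \lambda_p-1/r$ sends $[x_k,\infty]$ into $[x_{k+1},\infty]$. For $m\ge 2$, the map $r\mapsto m\lambda_p-1/r$ sends $[1/\lambda_p,\infty]$ into $[\lambda_p,\infty]$.
\item Hence, if $c_{i+1},\dots,c_{i+n-1}$ contained no $p-2$ consecutive entries equal to $\lambda_p$, every $r_k$ would be $\ge 1/\lambda_p$. This contradicts $r_{n-1}=0$.
\item A run $c_{j+1}=\dots=c_{j+p-2}=\lambda_p$ gives $f_{j,j+p-1}=V_{p-2}(\lambda_p)=1$ by Proposition~\ref{prop:chebyshev}(iv).
\item When $n>p$, positivity of $f_{j,j+p}=c_{j+p-1}-\lambda_p$ and of $f_{j-1,j+p-1}=c_j-\lambda_p$ is exactly what lets you cut off the ear.
\item The same argument shows $n\ge p$, and that $n=p$ forces the constant row $\lambda_p$. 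This supplies the base case that your induction also leaves open.
\end{itemize}
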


Moreover, given a $p$-angulation of a regular $n$-gon, there is again a simple method to determine the quiddity row of the 
corresponding frieze pattern. Namely, each $p$-angle in the $p$-angulation is labelled by $\lambda_p$. 
For each vertex of the $n$-gon, compute the sum of the labels of the attached $p$-angles. The sequence of these
sums (repeated periodically) gives the quiddity row of the corresponding frieze pattern.

\begin{Example}
We consider a quadrangulation of a hexagon. The above procedure leads to the quiddity row
$(\ldots,\sqrt{2},\sqrt{2},2\sqrt{2}, \sqrt{2},\sqrt{2},2\sqrt{2},\ldots)$ and the corresponding frieze pattern 
of type $\Lambda_4$ has the form
$$\begin{matrix} 
& 1 & & 1 & & {\red 1} & & 1 & & 1 & & 1 & & 1 & & 1 & \\
& & \sqrt{2} & & {\red \sqrt{2}} & & {\red 2\sqrt{2}} & & \sqrt{2} & & \sqrt{2} & & 2\sqrt{2} & & \sqrt{2} &  & \\
~\ldots~ & ~3~ & & ~{\red 1}~ & & ~{\red 3}~ & & ~{\red 3}~ & & ~1~ & & ~3~ & & ~3~ & & ~1~ & ~\ldots~ \\
& & {\red \sqrt{2}} & & {\red \sqrt{2}} & & {\red 2\sqrt{2}} & & {\red \sqrt{2}} & & \sqrt{2} & & 2\sqrt{2} & & \sqrt{2} &  &\\
& {\red 1} & & {\red 1} & & {\red 1} & & {\red 1} & & {\red 1} & & 1 & & 1 & & 1  &\\
\end{matrix}
$$
where we have again highlighted a fundamental region in red.
\end{Example}

The results in \cite{HJ17} even yield more general frieze patterns than the ones in Theorem \ref{thm:HJ-ThmA}. We can 
associate a frieze pattern to an arbitrary polygon dissection, not only to $p$-angulations, see \cite[Construction 0.3]{HJ17}. 
Let $\mathcal{T}$ 
be a dissection of a regular $n$-gon $P$, with vertices denoted $0,1,2,\ldots,n-1$ in counterclockwise order. 
We label each subpolygon in this dissection with $\lambda_q$, where $q$
is the number of vertices of the subpolygon. Note that each triangle is labelled by 1, each quadrangle by 
$\sqrt{2}$, each pentagon by $\frac{1+\sqrt{5}}{2}$ etc. 
Then for each vertex $i$ of $P$ we add the labels of all subpolygons adjacent to $i$, and call this sum
$c_i$. Then \cite[Theorem B]{HJ17} states that $(c_0,c_1,\ldots,c_{n-1})$, repeated periodically, is the quiddity
row of a frieze pattern $F_{\mathcal{T}}$ (of height $n-3$) corresponding to the dissection $\mathcal{T}$. The entries in this frieze pattern 
are in the ring of algebraic integers $\mathcal{O}_K$
of the field 
$K=\mathbb{Q}(\lambda_{p_1},\ldots,\lambda_{p_s})$ where $p_1,\ldots,p_s$ are the sizes of the subpolygons of the dissection $\mathcal{T}$. Moreover, all entries in these frieze patterns
are $\ge 1$, see \cite[Lemma 3.3]{HJ17}. 

\begin{Example} \label{ex:frieze46}
(1) The second frieze pattern in Example \ref{ex:friezes} is of the form $F_{\mathcal{T}}$ where $\mathcal{T}$
is a dissection of a pentagon into a triangle and a quadrangle. 
\smallskip

\noindent
(2) 
The following is a frieze pattern over $\mathbb{Z}[\sqrt{2},\sqrt{3}]$, the ring of integers of the algebraic number field 
$\mathbb{Q}(\lambda_4,\lambda_6)=\mathbb{Q}(\sqrt{2},\sqrt{3})$. It comes from a dissection of an octagon
into a quadrangle and a hexagon, by the above construction from \cite{HJ17}.

\begin{figure}[H]
\begingroup
\[
  \vcenter{
  \renewcommand{\objectstyle}{\scriptstyle}
  \xymatrix @-0.0pc @!0 {
     1 && 1 && 1 && {\red 1} && 1 && 1 && 1 && 1 && 1  \\    
    & \sqrt{3} && \sqrt{3} && {\red \sqrt{3}} && {\red \sqrt{2}+\sqrt{3}} && \sqrt{2} && \sqrt{2} && \sqrt{2}+
   \sqrt{3} && \sqrt{3} & \\
     2 && 2 && {\red 2} && {\red 2+\sqrt{2}\sqrt{3}} && {\red 1+\sqrt{2}\sqrt{3}} && 1 && 1+\sqrt{2}\sqrt{3} && 2+\sqrt{2}\sqrt{3} && 2\\
    & \sqrt{3} && {\red \sqrt{3}} && {\red 2\sqrt{2}+\sqrt{3}} && {\red 2\sqrt{2}+\sqrt{3}} && {\red \sqrt{3}} && \sqrt{3} && 2\sqrt{2}+\sqrt{3} && 2\sqrt{2}+\sqrt{3} \\
     1+\sqrt{2}\sqrt{3} && {\red 1} && {\red 1+\sqrt{2}\sqrt{3}} && {\red 2+\sqrt{2}\sqrt{3}} && {\red 2} && {\red 2} && 2 && 2+\sqrt{2}\sqrt{3} && 1+\sqrt{2}\sqrt{3} \\   
   & {\red \sqrt{2}} && {\red \sqrt{2}} && {\red \sqrt{2}+\sqrt{3}} && {\red\sqrt{3}} && {\red \sqrt{3}} && {\red \sqrt{3}} && \sqrt{3} && \sqrt{2}+\sqrt{3} &\\   
    {\red 1} && {\red 1} && {\red 1} && {\red 1} && {\red 1} && {\red 1} && {\red 1} && 1 && 1 \\   
        }
   }
\]
\endgroup
\label{fig:sqrt3_frieze}
\end{figure}

\end{Example}

\section{Chebyshev polynomials} \label{sec:chebyshev}

We consider in this section a series of polynomials and review some of their fundamental properties. 
These polynomials are closely related to the famous Chebyshev polynomials of the second kind. 
The relevance of these polynomials in the context of frieze patterns has been observed before, see 
the paper by Dupont \cite{Dup12} for more details. The following definition also appears in 
\cite[Definition 6]{BC21}.

\begin{Definition} \label{def:chebyshev}
We define recursively the following polynomials over the real numbers:
$$V_{-1}(x)=0,\,V_0(x)=1,\mbox{~~and~~}
V_{n+1}(x) = x\cdot V_n(x) - V_{n-1}(x)\mbox{~~for $n\ge 0$}.
$$
\end{Definition}

For instance, we have $V_1(x)=x$, $V_2(x)=x^2-1$, $V_3(x)=x^3-2x$, $V_4(x)=x^4-3x^2+1$,
$V_5(x)= x^5-4x^3+x$ etc.

\begin{Remark} \label{rem:roots}
We have $V_n(x) = U_n(x/2)$ where $U_n$ are the Chebyshev polynomials of the second kind.
Moreover, from a well-known formula for the roots of the Chebyshev polynomials one gets
$$V_n(x) = \prod_{k=1}^n \left(x-2\cos\left(\frac{\pi}{n+1}\cdot k\right)\right)
$$
for every $n\in \mathbb{N}$. 
\end{Remark}

\begin{Remark} \label{rem:Euclidlength}
For $p\ge 3$ let $P$ be a regular $p$-gon with Euclidean side lengths $1=V_0(\lambda_p)$. 
By elementary trigonometry the Euclidean length of any diagonal of combinatorial length 2 is 
$\lambda_p=V_1(\lambda_p)$. Inductively, now consider diagonals of combinatorial lengths 
$i-1,i,i+1$. If the Euclidean lengths of the former two are $V_{i-1}(\lambda_p)$ and $V_i(\lambda_p)$,
then the Euclidean length $\alpha$ of the diagonal with combinatorial length $i+1$ satisfies by Ptolemy's theorem that 
$$\lambda_p V_i(\lambda_p) = V_{i-1}(\lambda_p) \cdot 1 + \alpha\cdot 1 =
 V_{i-1}(\lambda_p) + \alpha.
 $$
 Hence we get
 $$\alpha = \lambda_p V_i(\lambda_p) - V_{i-1}(\lambda_p) = V_{i+1}(\lambda_p).
 $$
 It follows that all values $V_1(\lambda_p),V_2(\lambda_p),\ldots,V_{p-3}(\lambda_p)$ are
 greater than or equal to $\lambda_p$, because these are Euclidean lengths of diagonals of
 combinatorial lengths $\ge 2$. 
\end{Remark}

We summarize some known facts about the polynomials $V_n$ which will be needed in this paper. These all follow from 
well-known properties of Chebyshev polynomials. For completeness we include a proof here.

\begin{Proposition} \label{prop:chebyshev}
With the above notation the following holds.
\begin{enumerate}
\item[{(i)}] $V_n(x)^2 - V_{n-1}(x)V_{n+1}(x) = 1$ for all $n\ge 0$. 
\item[{(ii)}] $V_n(\lambda_{n+1})=0$ for all $n\ge 1$.
\item[{(iii)}] $V_n(\lambda_n)=-1$ for all $n\ge 2$.
\item[{(iv)}] $V_{n}(\lambda_{n+2})=1$ for all $n\ge 0$. 
\end{enumerate}
\end{Proposition}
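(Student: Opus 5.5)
For (i) I would argue by induction on $n$. At $n=0$ the left-hand side is $V_0^2-V_{-1}V_1=1-0=1$. For the inductive step, assume $V_n^2-V_{n-1}V_{n+1}=1$. The recursion gives $V_{n+2}=xV_{n+1}-V_n$, so
$$V_{n+1}^2-V_nV_{n+2}=V_{n+1}^2-xV_nV_{n+1}+V_n^2 = V_n^2 - V_{n+1}(xV_n-V_{n+1}) = V_n^2-V_{n+1}V_{n-1}=1,$$
where the middle step rewrites $xV_n - V_{n+1}$ as $V_{n-1}$, again by the recursion. An equivalent route is to note that $\det\begin{pmatrix}V_n&V_{n-1}\\V_{n+1}&V_n\end{pmatrix}$ is preserved by multiplication with $\begin{pmatrix}x&-1\\1&0\end{pmatrix}$, which has determinant $1$.

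For (ii)–(iv) I would use the trigonometric closed form. Put $x=2\cos\theta$ with $\theta\in(0,\pi)$. Then
$$V_n(2\cos\theta)=\frac{\sin((n+1)\theta)}{\sin\theta}.$$
This holds for $n=-1$ and $n=0$, and the addition identity $\sin((n+2)\theta)+\sin(n\theta)=2\cos\theta\,\sin((n+1)\theta)$ shows the formula satisfies the same recursion, so it holds for all $n\ge -1$. (Remark \ref{rem:roots} gives another way to see (ii).) Now take $\theta=\pi/m$, so that $x=\lambda_m$.
\begin{itemize}
\item For (ii), set $m=n+1$ with $n\ge 1$. Then $(n+1)\theta=\pi$, and $\theta=\pi/(n+1)$ lies in $(0,\pi)$, so $\sin\theta\neq 0$ and $V_n(\lambda_{n+1})=0$.
\item For (iii), set $m=n\ge 2$. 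Then $\sin((n+1)\pi/n)=\sin(\pi+\pi/n)=-\sin(\pi/n)$, which gives $V_n(\lambda_n)=-1$.
\item For (iv), set $m=n+2$ with $n\ge 0$. Then $\sin((n+1)\pi/(n+2))=\sin(\pi-\pi/(n+2))=\sin(\pi/(n+2))$, which gives $V_n(\lambda_{n+2})=1$.
\end{itemize}

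There is no real obstacle. The only care needed is the range of $n$, so that $\sin\theta\ne 0$; this is exactly why each statement carries its lower bound on $n$. If one prefers to avoid trigonometry, (iii) and (iv) can instead be obtained from (ii) and (i), using a symmetry $V_{m-1-k}(\lambda_m)=V_{k-1}(\lambda_m)$ proved by downward induction. The sine formula is more direct, so I would use it.
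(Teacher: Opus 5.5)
Your proof is correct. Part (i) is the same induction as in the paper: you pass from $n$ to $n+1$ and collapse $xV_n-V_{n+1}$ to $V_{n-1}$ in one step, where the paper expands twice.

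For (ii)--(iv) your route is different. The paper proves these as follows:
\begin{itemize}
\item (ii) comes from the product formula $V_n(x)=\prod_{k=1}^n\bigl(x-2\cos\frac{k\pi}{n+1}\bigr)$ of Remark~\ref{rem:roots}, which the paper quotes as well known.
\item For (iii), it evaluates (i) at $\lambda_n$ to get $V_n(\lambda_n)^2=1$. It then fixes the sign by checking that exactly one factor of that product is negative.
\item (iv) is then pure algebra from (i) at index $n+1$ together with (ii) and (iii).
\end{itemize}
You instead derive $V_n(2\cos\theta)=\sin((n+1)\theta)/\sin\theta$ from the recursion itself and read off all three values by evaluation. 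This makes your argument self-contained and handles the sign in (iii) without any factor counting. The paper's argument rests on the cited root formula, which in fact follows from your closed form: $V_n$ is monic of degree $n$ and vanishes at the $n$ distinct points $2\cos\frac{k\pi}{n+1}$. The paper's route has the advantage of reusing Remark~\ref{rem:roots}, which it needs again later, e.g.\ for the positivity of $V_n(2\lambda_p)$ in Example~\ref{ex:lambda4}.

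One caveat about your closing aside: (iii) and (iv) do not follow from (i) and (ii) alone. The only information these give about the point $\lambda_m$ is that it is a root of $V_{m-1}$. From that you get just $V_{m-2}(\lambda_m)^2=1$ and $V_{m-2}(\lambda_m)V_m(\lambda_m)=-1$. But for $m\ge 3$ the root $x=2\cos\frac{2\pi}{m}$ of $V_{m-1}$ satisfies the same relations and has $V_{m-2}(x)=-1$.

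Relatedly, the symmetry $V_{m-1-k}(\lambda_m)=V_{k-1}(\lambda_m)$ at $k=1$ is precisely (iv). So any induction proving it needs (iv) as a base case. In the paper, Proposition~\ref{prop:symmetry} is deduced from (iv), not conversely. Some sign information is unavoidable here, either your sine formula or the paper's factor count. Since your actual proof uses the sine formula, this does not affect its correctness.
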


\begin{proof}
(i) We prove this by induction. 
For $n=0$ we have by Definition \ref{def:chebyshev} 
$$V_0(x)^2- V_{-1}(x)V_1(x) = 1 - 0\cdot x = 1,$$
as claimed. Now let $n>0$. Then we use the recursive formula from Definition \ref{def:chebyshev} and the
induction hypothesis and obtain that
\begin{eqnarray*}
V_n(x)^2 - V_{n-1}(x)V_{n+1}(x) & = & V_n(x)^2 - V_{n-1}(xV_n(x)-V_{n-1}(x)) \\
& = & V_n(x)^2 - xV_{n-1}(x)V_{n}(x) + V_{n-1}(x)^2 \\
& = & V_n(x)^2 - xV_{n-1}(x)V_{n}(x) + (1+V_{n-2}(x) V_{n}(x)) \\
& = & V_n(x) (xV_{n-1}(x)-V_{n-2}(x)) - xV_{n-1}(x)V_{n}(x) + 1+V_{n-2}(x) V_{n}(x) \\
& = & 1
\end{eqnarray*}
and this proves the inductive step.
\smallskip

\noindent
(ii) From the formula in Remark \ref{rem:roots} it is clear that $\lambda_{n+1}=2\cos(\frac{\pi}{n+1})$ is a root of
$V_n(x)$ for $n\ge 1$. 
\smallskip

\noindent
(iii) Evaluating the formula in (i) at $x=\lambda_n$ and using part (ii) yields
that $V_n(\lambda_n)^2 = 1$ for $n\ge 2$, that is, $V_n(\lambda_n)\in \{\pm 1\}$. 
For determining the correct sign, we consider the factorization
$$V_n(x) = \prod_{k=1}^n \left(x- 2\cos\left(\frac{k\pi}{n+1}\right)\right)
$$
from Remark \ref{rem:roots}.
We evaluate at $\lambda_n$ and consider the sign of each factor. For $k=1$, the factor
$\lambda_n - 2\cos(\frac{\pi}{n+1})$ is negative. We claim that for all $k=2,\ldots,n$
the corresponding factor is positive, and this proves part (iii). So let $2\le k\le n$. 
Since $n\ge 2$ we have $kn> n+1 > n$, and thus $\frac{k\pi}{n+1} > \frac{\pi}{n} > \frac{\pi}{n+1}$. 
Since the cosine is strictly decreasing in the interval $[0,\pi]$ we deduce that 
$$2\cos\left(\frac{k\pi}{n+1}\right) < \lambda_n < 2\cos\left(\frac{\pi}{n+1}\right)
$$
and the claim follows.
\smallskip

\noindent
(iv) For any $n\ge 0$ we get the following series of equations by using parts (i), (ii)
and (iii): 
\begin{eqnarray*}
1 & = & V_{n+1}(\lambda_{n+2})^2 - V_n(\lambda_{n+2})V_{n+2}(\lambda_{n+2}) 
 \,\, = \,\, 0 - V_n(\lambda_{n+2})V_{n+2}(\lambda_{n+2}) \\
& = & -V_n(\lambda_{n+2})\cdot (-1) \,\, = \,\, V_n(\lambda_{n+2})
\end{eqnarray*}
proving part (iv).
\end{proof}

As a consequence we get the following symmetry property for our Chebyshev polynomials.

\begin{Proposition} \label{prop:symmetry}
For every integer $q\ge 2$ we have
$$V_j(\lambda_q) = V_{(q-2)-j}(\lambda_q)\mbox{~~~~~~~for all $j=0,1,\ldots,q-2$}.
$$
\end{Proposition}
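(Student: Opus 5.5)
The plan is to show that, at $x=\lambda_q$, the sequence $(V_j(\lambda_q))_j$ read backwards from index $q-2$ satisfies the same three-term recursion and the same two initial values as the sequence read forwards. Uniqueness of solutions of a second-order linear recurrence then forces the two to agree.

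Fix $q\ge 2$ and put $x=\lambda_q$. Define $W_j := V_{(q-2)-j}(\lambda_q)$ for $j=-1,0,\ldots,q-1$.

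First, the initial values. We have $W_{-1}=V_{q-1}(\lambda_q)=0$ by Proposition \ref{prop:chebyshev}(ii), since $q-1\ge 1$. We also have $W_0=V_{q-2}(\lambda_q)=1$ by Proposition \ref{prop:chebyshev}(iv), applied with $n=q-2\ge 0$.

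Second, the recursion. The defining relation $V_{m+1}(x)=xV_m(x)-V_{m-1}(x)$ is symmetric in $m+1$ and $m-1$, so it can be read backwards as
$$V_{m-1}(x)=xV_m(x)-V_{m+1}(x).$$
Setting $m=(q-2)-j$ gives $W_{j+1}=\lambda_q W_j - W_{j-1}$ for $0\le j\le q-2$. Here the indices $m\pm1$ stay in the range $-1,\ldots,q-1$, where the recursion is valid (we need $m\ge 0$, which holds).

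Third, the conclusion. The sequences $(W_j)$ and $(V_j(\lambda_q))$ satisfy the same recurrence with the same values at $j=-1$ and $j=0$. Induction on $j$ therefore gives $W_j=V_j(\lambda_q)$ for all $j=0,\ldots,q-2$, which is exactly the claim.

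The only real obstacle is bookkeeping: checking that every index used stays inside the range where the recursion and Proposition \ref{prop:chebyshev} apply. The edge case $q=2$ is trivial: there the statement reads $V_0=V_0$, and consistently $\lambda_2=0$ with $V_1(0)=0$ and $V_0(0)=1$.
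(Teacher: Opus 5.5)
Your proof is correct and is essentially the paper's argument: the paper also inducts on $j$ using the backward-read recursion, with Proposition \ref{prop:chebyshev}(ii) and (iv) supplying the start values. The only cosmetic difference is that you anchor at $j=-1,0$ (using $W_{-1}=V_{q-1}(\lambda_q)=0$), whereas the paper checks $j=0$ and $j=1$ directly, the latter from the same two facts.
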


\begin{proof}
We prove the statement by induction on $j$. 

For $j=0$ we have
$V_0(\lambda_q) = 1 = V_{q-2}(\lambda_q)$
by Definition \ref{def:chebyshev} and Proposition \ref{prop:chebyshev}\,(iv).

For $j=1$ we use Definition \ref{def:chebyshev} and Proposition \ref{prop:chebyshev}\,(ii),(iv) to obtain
$$V_{q-3}(\lambda_q) = \lambda_q V_{q-2}(\lambda_q) - V_{q-1}(\lambda_q) = \lambda_q = V_1(\lambda_q).
$$ 
Now let $j\ge 2$. Using Definition \ref{def:chebyshev} and the induction hypothesis (for two previous values) we obtain 
\begin{eqnarray*}
V_{(q-2)-j}(\lambda_q) & = & \lambda_q V_{(q-2)-(j-1)}(\lambda_q) - V_{(q-2)-(j-2)}(\lambda_q) \\
& = & \lambda_q V_{j-1}(\lambda_q) - V_{j-2}(\lambda_q) \\
& = & V_j(\lambda_q)
\end{eqnarray*}
which proves the inductive step. 
\end{proof}

\section{Computing the entries of the frieze patterns} \label{sec:algorithm}

By definition, the entries of a frieze pattern have to satisfy the unimodular rule. However, the entries of 
a frieze pattern (with non-zero entries) 
satisfy many more conditions, the so-called {\em Ptolemy relations}. We briefly recall
this viewpoint here; see for instance \cite[Section 2]{HJ17} for more details.  
Let $F$ be a frieze pattern of height $n-3$, and denote the entries by $f_{i,j}$, as in Definition \ref{def:frieze}. 
Note that the indices appearing in the triangular fundamental region (highlighted in red in Definition \ref{def:frieze})
are in bijection with the diagonals of a regular $n$-gon (where the 1's at the bottom correspond to the 
edges of the polygon). Since the frieze pattern has a glide symmetry, this viewpoint implies that 
a frieze pattern of height $n-3$ can also be described as an assignment of (positive real) numbers
to the edges and diagonals of an $n$-gon $P$ such that edges are mapped to 1 and for each pair $(i,j)$ and $(k,\ell)$ of crossing diagonals, the Ptolemy relation 
$$f_{i,j}f_{k,\ell} = f_{i,k}f_{j,\ell} + f_{i,\ell}f_{k,j}
$$
holds (for a proof see for instance \cite[Theorem 3.3]{CHJ20}).
Note that the unimodular rule is just a special case of the Ptolemy relations, namely for the crossing diagonals
$(i,j)$ and $(i+1,j+1)$. 
\smallskip

Let $\mathcal{T}$ be a dissection of a polygon $P$. 
In Section \ref{sec:friezes} we have summarized the results from \cite{HJ17} showing how to obtain a frieze
pattern $F_{\mathcal{T}}$ corresponding to $\mathcal{T}$. More precisely, the quiddity row of $F_{\mathcal{T}}$
is specified (summing the labels of subpolygons at each vertex of $P$), and the results of \cite{HJ17} imply
that this is indeed the quiddity row of a frieze pattern (i.e. by repeatedly applying the unimodular rule 
one obtains a closing row of 1's).  

\smallskip

The aim of this section is to provide a fairly simple procedure, performed on the dissection $\mathcal{T}$, 
for computing all entries of the corresponding frieze pattern $F_{\mathcal{T}}$. That is,
we introduce a combinatorial algorithm which produces for every pair $(i,j)$ of vertices 
of the $n$-gon $P$ a (positive real) number $c_{i,j}$. We then go on to show that the numbers $c_{i,j}$
coincide with the entries $f_{i,j}$ of the frieze pattern $F_{\mathcal{T}}$ from \cite{HJ17}. An alternative algorithm,
using matchings, appears in \cite[Section 6]{BC21}.

\begin{Algorithm} \label{alg:frieze}
Let $P$ be a dissected polygon. For any vertex $i$ of $P$ perform the following steps:
\begin{enumerate}
\item[{(i)}] Put label 0 at vertex $i$. 
\item[{(ii)}] If $Q$ is a $q$-gon in the dissection which is adjacent to $i$, and $\ell$ is a vertex of $Q$, then put label
$V_{d-1}(\lambda_q)$ at vertex $\ell$, where $d$ is the number of edges in $Q$ from $i$ to $\ell$. 
\item[{(iii)}] Let $Q$ be a $q$-gon in the dissection for which labels $a$ and $b$ at the ends $s$ and $t$ of an edge
have already been assigned.
For every vertex $k$ of $Q$ put label 
$$aV_{q-d-2}(\lambda_q) + bV_{d-1}(\lambda_q)$$
at vertex $k$ where $d$ is the number of edges in $Q$ from $s$ to $k$ (on the path not going through $t$).
\end{enumerate}
Eventually, every vertex $j$ of $P$ gets assigned a label, denoted $c_{i,j}$. 
\end{Algorithm}

\begin{Remark}
Note that in Step (ii) of Algorithm \ref{alg:frieze} it doesn't matter whether the distance $d$ is taken clockwise
or anticlockwise because by Proposition \ref{prop:symmetry} we have 
$$V_{d-1}(\lambda_q) = V_{q-2-(d-1)}(\lambda_q) = V_{q-d-1}(\lambda_q).
$$
Similarly, in Step (iii) the roles of $a$ and $b$ can be interchanged since
$$b V_{q-(q-d-1)-2}(\lambda_q) + a V_{q-d-2}(\lambda_q) = b V_{d-1}(\lambda_q)+aV_{q-d-2}(\lambda_q).
$$
\end{Remark}

\begin{Theorem} \label{thm:BCIdissections}
Let $\mathcal{T}$ be a dissection of a regular $n$-gon $P$. 
Then for all vertices $i$ and $j$ of $P$,
the numbers $c_{i,j}$ computed with Algorithm \ref{alg:frieze} coincide with the entries in the 
frieze pattern $F_{\mathcal{T}}$ from \cite{HJ17}, that is, $c_{i,j}=f_{i,j}$.  
\end{Theorem}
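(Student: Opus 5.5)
Fix a vertex $i$ of $P$ and regard $j\mapsto f_{i,j}$ as a labelling of the vertices of $P$, with $f_{i,i}=0$. The aim is to show that this labelling satisfies the same local rules (ii) and (iii) that define $c_{i,\cdot}$ in Algorithm \ref{alg:frieze}. Since the dual graph of the dissection is a tree, the algorithm reaches every subpolygon, and each label is forced once the labels at the two ends of one edge are known. So checking the rules proves $c_{i,j}=f_{i,j}$; as a by-product it also shows that the algorithm is well defined.

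The key tool is a local lemma. Let $Q$ be a $q$-gon of $\mathcal{T}$ with vertices $s=v_0,v_1,\dots,v_{q-1}=t$ in order, so that $st$ is an edge of $Q$. For any vertex $x$ of $P$ not lying strictly inside the region cut off by $Q$ on the side of a given edge, we want
$$f_{x,v_d}=f_{x,s}\,V_{q-d-2}(\lambda_q)+f_{x,t}\,V_{d-1}(\lambda_q).$$
In particular, taking $x$ with $f_{x,s}=0$, i.e.\ $x=s$, and noting $f_{s,t}=1$ because $st$ is an edge of $Q$, this gives $f_{s,v_d}=V_{d-1}(\lambda_q)$, which is rule (ii).

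I would prove the lemma in two steps.

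\emph{Step 1: the polygon's own diagonals.} We need $f_{v_a,v_b}=V_{b-a-1}(\lambda_q)$ for vertices of $Q$. Cut $P$ along the sides of $Q$. Then $Q$ is glued to dissected polygons, each attached along one side of $Q$. Frieze entries restricted to a sub-polygon $R$ of a dissection agree with the frieze of the induced dissection of $R$; this holds because gluing frieze patterns along a common edge is compatible with the construction in \cite{HJ17}. I would cite \cite{HJ17} for this, or else prove it through the matrix product description of entries.

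Granting this, $f_{v_{k-1},v_{k+1}}$ is computed inside the fan consisting of $Q$ together with the pieces glued at $v_k$. Its value is the quiddity contribution at $v_k$ from the sub-dissection containing $Q$ but not the pieces outside $v_{k-1}v_{k+1}$. I expect this to give $\lambda_q$ once one argues via the unimodular rule within $Q$ alone, where the quiddity is $\lambda_q$ at each vertex. Then the three-term recursion $f_{v_0,v_{k+1}}=\lambda_q f_{v_0,v_k}-f_{v_0,v_{k-1}}$ follows from the Ptolemy relation for the crossing diagonals $v_0v_k$ and $v_{k-1}v_{k+1}$. This reproduces Definition \ref{def:chebyshev}.

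\emph{Step 2: from $Q$ to an external vertex $x$.} Apply the Ptolemy relation to the crossing pair $(x,v_d)$ and $(s,t)$, or to suitable crossing pairs. This expresses $f_{x,v_d}$ linearly in $f_{x,s}$ and $f_{x,t}$. The coefficients are determined by $Q$'s internal entries, and by Proposition \ref{prop:chebyshev}(i) and Proposition \ref{prop:symmetry} they evaluate to $V_{q-d-2}(\lambda_q)$ and $V_{d-1}(\lambda_q)$. Concretely, use the Ptolemy relation on the quadrilateral $x,s,v_d,t$:
$$f_{x,v_d}\,f_{s,t}=f_{x,s}\,f_{v_d,t}+f_{x,t}\,f_{s,v_d},$$
with $f_{s,t}=1$, $f_{v_d,t}=V_{q-d-2}(\lambda_q)$ and $f_{s,v_d}=V_{d-1}(\lambda_q)$. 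This is exactly rule (iii), and the proof is then finished by induction along the dual tree starting at $i$.

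The main obstacle is Step 1, namely that $f$ restricted to $Q$ is the "Euclidean" frieze of the undissected $q$-gon with quiddity $\lambda_q$, as in Remark \ref{rem:Euclidlength}, despite the pieces glued on. I would first try to handle this by induction on the number of subpolygons. Remove an ear piece $R$ attached along a diagonal $uv$. The frieze of $\mathcal{T}$ restricted to $P\setminus R$ should be the frieze of the smaller dissection, because the quiddity changes exactly by the contribution of $R$ at $u$ and $v$, and gluing is compatible with \cite[Theorem B]{HJ17}.
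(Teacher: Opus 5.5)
Your proposal is correct in outline but organised differently from the paper's proof. The paper inducts on the number of tiles. It removes an ear $P_1$ with diagonal $(s,t)$, applies the induction hypothesis to the remaining polygon $P'$, and reads off the entries inside $P_1$ from the single-polygon computation. For $i\in P'$ it then uses one Ptolemy relation for $(i,\ell_1),(s,t)$, followed by the three-term recurrence along $\ell_1,\dots,\ell_{q-2}$. You instead fix $i$, prove a local lemma for every tile, and propagate along the dual tree. Your Step 2, a single Ptolemy relation for the crossing pair $(x,v_d),(s,t)$, gives rule (iii) for every $d$ at once. It also handles all starting vertices uniformly and shows the algorithm is well defined. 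The cost is that you need the restriction property for an arbitrary tile $Q$, whereas the paper needs it only for the complement of one ear per step. The paper uses this ear version without comment when it invokes the induction hypothesis on $P'$, so on this point your proposal is no less complete than the paper.

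Two tidying remarks. First, state the hypothesis of your local lemma as ``$x$ and $Q$ lie on opposite sides of $st$''. This is what the dual-tree propagation provides, and for $x$ beyond another side of $Q$ the formula fails.

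Second, drop the ``fan'' discussion in Step 1. In $Q$ together with the pieces glued along $v_{k-1}v_k$ and $v_kv_{k+1}$, the vertices $v_{k-1},v_{k+1}$ are not neighbours, so $f_{v_{k-1},v_{k+1}}$ is not a quiddity entry there. Instead apply the restriction property to $R=Q$ itself; Step 1 is then just the paper's base case.

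The restriction property follows elementarily by iterated ear removal, without appealing to gluing results of \cite{HJ17}; a leaf of the dual tree outside $R$ always exists when $R\neq P$. Let $E$ be an ear with diagonal $uv$, whose interior vertices have quiddity $\lambda_q$.
\begin{itemize}
\item The three-term recurrence forces $f_{u,v}=V_{q-2}(\lambda_q)=1$ by Proposition~\ref{prop:chebyshev}(iv).
\item The restricted values are positive, satisfy all Ptolemy relations, and equal $1$ on the edges of $P\setminus E$.
\item Let $w$ be the neighbour of $u$ in $E$. The Ptolemy relation for $u^-,u,w,v$ gives $f_{u^-,v}=f_{u^-,w}-f_{w,v}$, with $f_{w,v}=V_{q-3}(\lambda_q)=\lambda_q$. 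So the quiddity at $u$, and likewise at $v$, drops by exactly the contribution of $E$.
\item A frieze pattern with non-zero entries is determined by its quiddity row.
\end{itemize}
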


\begin{proof}
We argue inductively via the number of subpolygons in the dissection $\mathcal{T}$. 

First, let $\mathcal{T}$ consist of a single $q$-gon, with vertices $1,2,\ldots,q$ in counterclockwise order. 
The entries in the frieze pattern $F_{\mathcal{T}}$ satisfy $f_{i,i+1}=1=V_0(\lambda_q)=c_{i,i+1}$ and 
$f_{i,i+2}=\lambda_q=V_1(\lambda_q)=c_{i,i+2}$
(these are the entries in the quiddity row), see Algorithm \ref{alg:frieze}\,(ii). Moreover, since $F_{\mathcal{T}}$ is a frieze pattern 
by \cite{HJ17}, the entries 
satisfy all Ptolemy relations. Start at some vertex, say at vertex 1. For the crossing diagonals $(1,3)$ and $(2,4)$
the Ptolemy relations yield
$$f_{1,4} = f_{1,3}f_{2,4} - 1 = \lambda_q V_1(\lambda_q) - V_0(\lambda_q) = V_2(\lambda_q).
$$
Inductively, assume that we have already shown that $f_{1,k}= V_{k-2}(\lambda_q)$ for all $2\le k\le j$. Then
we consider the crossing diagonals $(1,j)$ and $(j-1,j+1)$. The Ptolemy relations in this case, together with
the induction hypothesis, give
$$c_{1,j+1} = c_{j-1,j+1}c_{1,j} -c_{1,j-1} =  \lambda_q V_{j-2}(\lambda_q) - V_{j-3}(\lambda_q) = V_{j-1}(\lambda_q),
$$ 
as desired. This shows that in the case of a single $q$-gon we indeed have $f_{i,j}=c_{i,j}$ for all $1\le i,j\le q$,
see Step (ii) of Algorithm \ref{alg:frieze}.
\smallskip

Now we consider an arbitrary dissection $\mathcal{T}$ of some regular polygon $P$. Take a subpolygon 
$P_1$ in this dissection whose 
boundary contains only one diagonal from $P$ and all other edges of the subpolygon are also edges of $P$. 
(Clearly, such a subpolygon exists, in the case of triangulations this has sometimes be called an 'ear'.) 
Denote the number of vertices of $P_1$ by $q$. 
Let $P'$ be the polygon obtained from $P$ by deleting the $q$-gon $P_1$. Inductively, we can assume that
for every pair of vertices $(i,j)$ inside the smaller polygon $P'$ we have $f_{i,j}=c_{i,j}$. Moreover,
by the argument above for a single $q$-gon, we can assume that $f_{u,v}=c_{u,v}$ for all pairs of vertices
inside the $q$-gon $P_1$. 

Denote the 
two endpoints of the unique edge belonging to $P'$ and to $P_1$ by $s$ and $t$. Let $\ell_1,\ldots,
\ell_{q-2}$ the other vertices of the $q$-gon, as in the following figure.
\begin{center}
\begin{tikzpicture}[auto]
    \node[name=s, draw, shape=regular polygon, regular polygon sides=50, minimum size=3cm] {};
    \draw[thick] (s.corner 20) to (s.corner 40);
   \draw[shift=(s.corner 1)] node[above] 
   {{\small $i$}};
  \draw[shift=(s.corner 20)]  node[left] 
  {{\small $s$}};
  \draw[shift=(s.corner 40)]  node[right]  
  {{\small $t$}};
  \draw[shift=(s.corner 23)]  node[below]  
  {{\small $\ell_1$}};
  \draw[shift=(s.corner 27)]  node[below] 
  {{\small $\ell_2$}};
  \draw[shift=(s.corner 36)]  node[right] 
  {{\small $\ell_{q-2}$}};
   \end{tikzpicture}
\end{center}
Let $i$ be a vertex of $P'$. 
In the large polygon $P$, we consider the crossing diagonals $(i,\ell_1)$ and $(s,t)$. The frieze pattern $F_{\mathcal{T}}$
satisfies all Ptolemy relations, so we have
$$f_{i,\ell_1}\cdot f_{s,t} = f_{i,s}\cdot f_{t,\ell_1} + f_{i,t}\cdot f_{s,\ell_1}.
$$
By induction and the case of a single $q$-gon, we know that $f_{s,t}=1$ and $f_{i,s}=c_{i,s}$ and $f_{i,t}=c_{i,t}$
and $f_{t,\ell_1}=V_1(\lambda_q)=\lambda_q$ and $f_{s,\ell_1}=V_0(\lambda_q)=1$.
Thus, together with Proposition \ref{prop:symmetry}, we get
$$f_{i,\ell_1} = c_{i,s}\cdot V_1(\lambda_q) + c_{i,t}\cdot V_0(\lambda_q)
= c_{i,s}\cdot V_{q-3}(\lambda_q) + c_{i,t}\cdot V_0(\lambda_q).
$$
According to Step (iii) of Algorithm \ref{alg:frieze} we can conclude that $f_{i,\ell_1}=c_{i,\ell_1}$. 

We then proceed inductively along the vertices of the $q$-gon $P_1$. Suppose that we have already shown that
$f_{i,\ell_k} = c_{i,\ell_k}$ for all $1\le k\le j$. Then we consider in the large polygon $P$ the crossing 
diagonals $(i,\ell_j)$ and $(\ell_{j-1},\ell_{j+1})$. The Ptolemy relations for the frieze pattern $F_{\mathcal{T}}$ 
yield
$$f_{i,\ell_{j+1}} = f_{\ell_{j-1},\ell_{j+1}}\cdot f_{i,\ell_j} - f_{i,\ell_{j-1}}
= \lambda_q c_{i,\ell_j} - c_{i,\ell_{j-1}}.
$$
By Step (iii) of Algorithm \ref{alg:frieze} this takes the following form.
\begin{eqnarray*}f_{i,\ell_{j+1}} & = & \lambda_q (c_{i,s}V_{q-j-2}(\lambda_q) + c_{i,t}V_{j-1}(\lambda_q))
- (c_{i,s}V_{q-j-1}(\lambda_q) + c_{i,t}V_{j-2}(\lambda_q)) \\
& = & c_{i,s}( \lambda_q V_{q-j-2}(\lambda_q) - V_{q-j-1}(\lambda_q)) 
+ c_{i,t} ( \lambda_q V_{j-1}(\lambda_q) - V_{j-2}(\lambda_q)) \\
& = & c_{i,s} V_{q-j-3}(\lambda_q) + c_{i,t}V_j(\lambda_q) \\
& = & c_{i,\ell_{j+1}}
\end{eqnarray*}
This shows that indeed for every pair of vertices $(i,j)$ of $P$ we have $f_{i,j}=c_{i,j}$, as claimed. 
\end{proof}

\begin{Corollary} \label{cor:ge1}
Let $\mathcal{T}$ be a dissection of a regular $n$-gon $P$, and let $p\ge 3$ be the smallest size of a polygon
appearing in $\mathcal{T}$. Then all entries in the frieze pattern $F_{\mathcal{T}}$ corresponding to $\mathcal{T}$ 
belong to the set $\{1\}\cup [\lambda_p,\infty)$. 
\end{Corollary}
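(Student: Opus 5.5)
We will prove the statement by running Algorithm \ref{alg:frieze} from a fixed vertex $i$ and showing, by induction along the order in which the algorithm assigns labels, that every label $c_{i,j}$ with $j\neq i$ lies in $\{1\}\cup[\lambda_p,\infty)$. By Theorem \ref{thm:BCIdissections} this gives the claim for all entries $f_{i,j}$. The basic inputs are Remark \ref{rem:Euclidlength} and Proposition \ref{prop:symmetry}. For a $q$-gon, the values $V_0(\lambda_q),\dots,V_{q-2}(\lambda_q)$ are the Euclidean lengths of the sides and diagonals of a regular $q$-gon with unit sides. Hence they are all $\ge 1$. Moreover, $V_0(\lambda_q)=V_{q-2}(\lambda_q)=1$, and every $V_k(\lambda_q)$ with $1\le k\le q-3$ is $\ge\lambda_q$. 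Since $\lambda_q=2\cos(\pi/q)$ is increasing in $q$ and $q\ge p$, all these intermediate values are $\ge\lambda_p$.

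\emph{Step (ii).} For a $q$-gon $Q$ adjacent to $i$, the labels $V_{d-1}(\lambda_q)$ with $1\le d\le q-1$ are equal to $1$ when $d=1$ or $d=q-1$, and are $\ge\lambda_q\ge\lambda_p$ otherwise. So these labels are fine.

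\emph{Step (iii).} Here $Q$ is a $q$-gon with an edge $\{s,t\}$ whose endpoints already carry labels $a,b$, and these lie on the side of $Q$ containing $i$. By induction we may assume $a,b\in\{0\}\cup\{1\}\cup[\lambda_p,\infty)$, with $a=0$ or $b=0$ only if $s=i$ or $t=i$. However, step (iii) is only used for polygons not adjacent to $i$, since those adjacent to $i$ are handled by step (ii). So $a,b\ge1$. The new label at a vertex $k\neq s,t$ is
\[
aV_{q-d-2}(\lambda_q)+bV_{d-1}(\lambda_q),\qquad 1\le d\le q-2 .
\]
Both coefficients are $\ge 1$, because $0\le q-d-2\le q-3$ and $0\le d-1\le q-3$ fall in the range where $V_k(\lambda_q)\ge1$. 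Therefore the label is at least $a+b\ge 2$. It remains to see that $2\ge\lambda_p$, which is immediate since $\lambda_p=2\cos(\pi/p)<2$. So all labels produced in step (iii) are even $\ge2>\lambda_p$.

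The only point needing care, and the one I expect to be the main obstacle, is the claim that the labels $a,b$ are positive (i.e.\ nonzero) whenever step (iii) is applied. This requires checking the structure of the algorithm. The dissection's dual graph is a tree. Polygons containing $i$ are treated in step (ii), and every other polygon $Q$ is reached through a unique diagonal $\{s,t\}$ separating it from $i$; this diagonal does not contain $i$ (otherwise $Q$ would be adjacent to $i$). Hence both $s,t\neq i$, and by induction along the tree their labels are $\ge1$.

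Finally, the label at $i$ itself is $0$, which corresponds to the boundary zeros rather than to frieze entries, so it does not contribute. Thus every entry $f_{i,j}$ with $j\neq i$ lies in $\{1\}\cup[\lambda_p,\infty)$, and in fact entries with $j$ not in a polygon adjacent to $i$ are $\ge2$.
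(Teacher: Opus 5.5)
Your proof is correct and follows essentially the same route as the paper: via Theorem \ref{thm:BCIdissections}, entries at vertices of a common subpolygon are handled by Remark \ref{rem:Euclidlength}, and entries produced by Step (iii) of Algorithm \ref{alg:frieze} are bounded below using that the coefficients $V_k(\lambda_q)$, $0\le k\le q-3$, are at least $1$. The differences are minor refinements: you only need $a,b\ge 1$ to get $a+b\ge 2>\lambda_p$ (the paper assumes $a\ge\lambda_p$, $b\ge 1$ and gets $\lambda_p+1$), and you justify explicitly, via the dual tree, why $s,t\neq i$ so that the labels $a,b$ are nonzero, a point the paper leaves implicit.
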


\begin{proof} 
Let $f_{i,j}$ be an arbitrary entry of $F_{\mathcal{T}}$. Theorem \ref{thm:BCIdissections} tells us that 
$f_{i,j}$ can be computed inductively by Algorithm \ref{alg:frieze}. If $i$ and $j$ are in the same $q$-gon, 
then Remark \ref{rem:Euclidlength} implies that $f_{i,j}=1$ or $f_{i,j}\ge \lambda_q\ge \lambda_p$
(the last inequality holds by definition of $p$). If $i$ and $j$ are not in a same subpolygon, then 
inductively we can assume for the last application of 
Step (iii) of Algorithm \ref{alg:frieze} that $a\ge \lambda_p$ and $b\ge 1$ (or vice versa),
and then 
$$f_{i,j} = aV_{q-d-2}(\lambda_q) + bV_{d-1}(\lambda_q) \ge 
\lambda_p+1> \lambda_p$$
which proves the claim in the corollary. 
\end{proof}


\section{Frieze patterns of types $\Lambda_4$ and $\Lambda_6$}
\label{sec:lambda46}

Frieze patterns of type $\Lambda_p$ are defined in terms of their quiddity rows (see Definition 
\ref{def:lambdap}). However, it would be desirable to characterise frieze patterns in terms of properties of
all their entries, like in the case $p=3$ where the frieze patterns of type $\Lambda_3$ are 
precisely the frieze patterns with positive integral entries considered by Conway-Coxeter. 
It is an open question whether there is a similarly useful characterisation of frieze patterns of
type $\Lambda_p$ for $p\ge 4$, see \cite[Section 7]{HJ17}. As an application of the counting 
procedure for dissections in Algorithm \ref{alg:frieze} we present in this section such a characterisation 
for the cases $p=4$ and $p=6$. Frieze patterns of type $\Lambda_4$ and $\Lambda_6$ have 
also been considered by Andritsch \cite{A20}, but without providing a complete characterisation.
\medskip

We recall that $\lambda_4=2\cos(\frac{\pi}{4})= \sqrt{2}$ and 
$\lambda_6=2\cos(\frac{\pi}{6})= \sqrt{3}$. Hence frieze patterns of types $\Lambda_4$ and
$\lambda_6$ are defined by their quiddity rows consisting of positive integral multiplies of
$\sqrt{2}$ and $\sqrt{3}$, respectively. 

In a frieze pattern, we number the row of 1s as the first row, the quiddity row as second row and so on.

\begin{Theorem} \label{thm:lambda46}
Let $\mathcal{F}$ be a frieze pattern. Then the following statements are equivalent:
\begin{enumerate}
\item[{(a)}] $\mathcal{F}$ is a frieze pattern of type $\Lambda_4$ or $\Lambda_6$.
\item[{(b)}] The entries of $\mathcal{F}$ satisfy the following two conditions. 
\begin{itemize}
\item[{(i)}] The odd-numbered rows consist of positive integers.
\item[{(ii)}] The even-numbered rows consist of positive integral multiples of $\sqrt{2}$ (for type $\Lambda_4$)
or $\sqrt{3}$ (for type $\Lambda_6$), respectively.
\end{itemize}
\end{enumerate}
\end{Theorem}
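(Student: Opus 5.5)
The direction (b)$\Rightarrow$(a) is immediate. Row 2 is an even-numbered row, so by (ii) the quiddity row consists of positive integral multiples of $\sqrt{2}$ (resp.\ $\sqrt{3}$). This is exactly Definition \ref{def:lambdap} for $p=4$ (resp.\ $p=6$).

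For (a)$\Rightarrow$(b) I would use the dissection model. By Theorem \ref{thm:HJ-ThmA}, $\mathcal{F}=F_{\mathcal{T}}$ for a $p$-angulation $\mathcal{T}$ of an $n$-gon, where $p\in\{4,6\}$. By Theorem \ref{thm:BCIdissections}, the entry $f_{i,j}$ equals the label $c_{i,j}$ produced by Algorithm \ref{alg:frieze}. The entry $f_{i,j}$ lies in row $j-i+1$, since $f_{i,i+1}=1$ is in row 1 and $f_{i,i+2}$ in row 2. So it suffices to show the following parity statement: $c_{i,j}$ is a positive integer if $j-i$ is odd, and a positive integer multiple of $\lambda_p$ if $j-i$ is even. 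Positivity is already given by \cite[Lemma 3.3]{HJ17} (all entries are $\ge 1$). It therefore remains to show $c_{i,j}\in\mathbb{Z}$, resp.\ $c_{i,j}\in\mathbb{Z}\lambda_p$, and a positive element of $\mathbb{Z}\lambda_p$ is automatically a positive integral multiple.

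First I would record the Chebyshev values. For $p=4$: $V_0,V_1,V_2=1,\sqrt{2},1$. For $p=6$: $V_0,\dots,V_4=1,\sqrt{3},2,\sqrt{3},1$. In both cases $V_k(\lambda_p)\in\mathbb{Z}$ for even $k$ and $V_k(\lambda_p)\in\mathbb{Z}\lambda_p$ for odd $k$. Since $\lambda_p^2\in\mathbb{Z}$, the set $R=\mathbb{Z}\cup\mathbb{Z}\lambda_p$ behaves like a $\mathbb{Z}/2$-graded ring.

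The key structural fact is that $p$ is even, so every polygon of $\mathcal{T}$ has even size. The vertices of $P$ can then be $2$-coloured by the parity of their label. Because all subpolygons have an even number of vertices, the combinatorial distance $d$ inside any subpolygon between two vertices has the same parity as their index difference in $P$; this is the step I expect to need a careful argument. One way to see it is by induction on ears: removing a $p$-gon ear with $p$ even preserves the parity of index differences between the remaining vertices.

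With the parity fact in hand, I would induct along Algorithm \ref{alg:frieze} with fixed $i$.
\begin{itemize}
\item Step (ii) gives $c_{i,\ell}=V_{d-1}(\lambda_p)$ with $d\equiv \ell-i \pmod 2$. So $d$ odd gives an integer and $d$ even gives a multiple of $\lambda_p$, as required.
\item Step (iii) concerns an edge $s,t$, which has odd index difference. Suppose inductively that $a=c_{i,s}$ and $b=c_{i,t}$ have the predicted opposite types. The new label is $aV_{p-d-2}+bV_{d-1}$. The indices $p-d-2$ and $d-1$ have opposite parity because $p$ is even. A bookkeeping check then shows that both summands lie in the same graded piece, namely the one predicted by the parity of $k-i\equiv (s-i)+d$.
\end{itemize}
The main obstacle is this parity bookkeeping together with the global parity claim. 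Once those are in place, the result follows.
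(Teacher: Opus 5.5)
Your proposal is correct and follows essentially the paper's own route. You realize $\mathcal{F}$ as $F_{\mathcal{T}}$ for a $p$-angulation via Theorem~\ref{thm:HJ-ThmA}, compute its entries with Algorithm~\ref{alg:frieze} via Theorem~\ref{thm:BCIdissections}, and induct over the gluing step to show that integers and multiples of $\lambda_p$ alternate with parity. Your $\mathbb{Z}/2$-grading replaces the paper's explicit formulas (such as $a\sqrt{2}+b$ and $a+b\sqrt{2}$). You also make explicit a fact the paper uses tacitly: every side of an even-sized subpolygon joins vertices with odd index difference, so the distance inside a subpolygon has the same parity as the row. One small slip: $f_{i,j}$ lies in row $j-i$, not $j-i+1$, as your own examples $f_{i,i+1}$ and $f_{i,i+2}$ show, and $j-i$ is the indexing your parity statement actually uses.
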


\begin{proof}
First assume that a frieze pattern satisfies properties (i) and (ii). In particular, by (i), the quiddity row
consists of positive integral multiplies of $\sqrt{2}$ or $\sqrt{3}$. Then by Definition \ref{def:lambdap},
the frieze pattern is of type $\Lambda_4$ or $\Lambda_6$. 
\smallskip

Conversely, suppose that we have a frieze pattern $\mathcal{F}$ of type $\Lambda_4$ or $\Lambda_6$. 
By Theorem \ref{thm:HJ-ThmA} there exists a corresponding 4-angulation or 6-angulation of a polygon
such that the entries in the quiddity row are of the form $a_v\sqrt{2}$ or $a_v\sqrt{3}$ where $a_v$ is the 
number of quadrangles or hexagons attached at the vertex $v$ of the polygon. 
Theorem \ref{thm:BCIdissections} tells us that we can compute the entries of $\mathcal{F}$
by using Algorithm \ref{alg:frieze}. 

We now consider the cases $p=4$ and $p=6$ separately. 

Let $p=4$ and let $P$ be a polygon dissected into quadrangles. For any vertex $i$ the values $f_{i,j}$
in the frieze pattern $\mathcal{F}$ are inductively determined by Algorithm \ref{alg:frieze}. In any 
quadrangle attached to $i$ the neighbouring vertices get label $V_0(\sqrt{2})=1$, the remaining vertex 
gets label $V_1(\sqrt{2})=\sqrt{2}$. Note that the labels are positive integers or integral multiples 
of $\sqrt{2}$, depending on the distance to $i$ being odd or even (i.e. the corresponding entries in
$\mathcal{F}$ appearing in odd-numbered or even-numbered rows). When inductively a further 
quadrangle is glued and values $a$ and $b$ at one of the edges of the quadrangle are already known, we
can hence assume that $a$ is a positive integer and $b$ a multiple of $\sqrt{2}$, or vice versa. 
By symmetry, let us assume that $a$ is a positive integer. 
The two new labels appearing in the quadrangle are given by 
$a\sqrt{2}+b$ and $a+b\sqrt{2}$, thus the first one is an integral multiple of $\sqrt{2}$ and the other 
is a positive integer. By induction, this proves that the labels from positive integers and integral
multiples of $\sqrt{2}$ alternate with the distance from the starting vertex $i$. This means that 
 properties (i) and (ii) are satisfied for the frieze pattern $\mathcal{F}$. 
 \smallskip
 
Let $p=6$ and let $P$ be a polygon dissected into hexagons.
For any vertex $i$ the values $f_{i,j}$
in the frieze pattern $\mathcal{F}$ are inductively determined by Algorithm \ref{alg:frieze}. In any 
hexagon attached to $i$ the neighbouring vertices get label $V_0(\sqrt{2})=1$, the other vertices 
get labels $V_1(\sqrt{3})=\sqrt{3}$, $V_2(\sqrt{3})=\sqrt{3}^2-1= 2$, 
$V_3(\sqrt{3})= \sqrt{3}^3-2\sqrt{2}=\sqrt{3}$. Again, the labels are positive integers or integral multiples 
of $\sqrt{3}$, depending on the distance to $i$ being odd or even. When inductively a further 
hexagon is glued and values $a$ and $b$ at one of the edges of the quadrangle are already known, we
can (by symmetry) assume that $a$ is a positive integer and $b$ a multiple of $\sqrt{3}$. 
The new labels of the vertices in the glued hexagon are then given by 
$aV_3(\sqrt{3})+bV_0(\sqrt{3})=a\sqrt{3}+b$,
$aV_2(\sqrt{3})+bV_1(\sqrt{3})=2a+b\sqrt{3}$,
$aV_1(\sqrt{3})+bV_2(\sqrt{3})=a\sqrt{3}+2b$, and
$aV_0(\sqrt{3})+bV_3(\sqrt{3})=a+b\sqrt{3}$.
Note that these values are alternating between integral multiples of $\sqrt{3}$ and positive integers,
depending on the distance of the vertices to the starting vertex being odd or even. Inductively, 
we can deduce that properties (i) and (ii) are satisfied for the frieze pattern $\mathcal{F}$. 
\end{proof}

Theorem \ref{thm:lambda46} gives a partial answer to a question in \cite[Section 7, Question (i)]{HJ17}.
It remains open to find a similar characterisation of frieze patterns of type $\Lambda_p$
for $p\not\in \{3,4,6\}$.

\begin{Remark} The proof of Theorem \ref{thm:lambda46} actually gives more precise 
information about the integers appearing in the odd-numbered rows of a frieze pattern $\mathcal{F}$ of type
$\Lambda_4$ or $\Lambda_6$. 

If $\mathcal{F}$ is of type $\Lambda_4$ then the odd-numbered rows consists of odd positive integers. 

If $\mathcal{F}$ is of type $\Lambda_6$ then the row with odd number $2k+1$ consists of 
positive integers which are congruent to $(-1)^k$ modulo 3. 

We leave the straightforward verification of these facts to the reader.   
\end{Remark}


\section{Infinite frieze patterns of type $\Lambda_p$}
\label{sec:infiniteLambdap}

Infinite frieze patterns of positive integers have been introduced by Tschabold \cite{T15}, see
also \cite{BPT16}. We generalize this notion by introducing infinite frieze patterns of
type $\Lambda_p$ for any $p\ge 3$ (where the case $p=3$ yields the infinite frieze patterns of
positive integers).

\begin{Definition} \label{def:infinitefrieze}
An {\em infinite frieze pattern} $\mathcal{F}$ is an array $(f_{i,j})_{i,j\in \mathbb{Z}, j - i\ge 1}$
of infinitely many infinite horizontal rows of positive real numbers, with an
offset between neighbouring rows, and with a bottom row consisting of entries
$f_{i,i+1}=1$ for $i\in \mathbb{Z}$. Sometimes it is useful to include a further bottom row of entries $f_{i,i}=0$.
The other rows contain
positive real numbers such that each diamond of four neighbouring numbers\,\,\,\,
$\begin{matrix} & b & \\ a & & d \\ & c & \end{matrix}$\,\,\,
satisfies the unimodular rule $ad-bc=1$.

\end{Definition}

An infinite frieze pattern can be visualized as in the following figure.
$$\begin{array}{ccccccccccccc}
& & \iddots & & \iddots & & \iddots & & \iddots & & \iddots & & \\
\cdots & & f_{-2,2} & & f_{-1,3} & & f_{0,4} & & f_{1,5} & & f_{2,6} & & \cdots \\
& & & f_{-1,2} & & f_{0,3} & & f_{1,4} & & f_{2,5} & & & \\
\cdots & & f_{-1,1} & & f_{0,2} & & f_{1,3} & & f_{2,4} & & f_{3,5} &  & \cdots \\
  & 1 & & 1 & & 1 & & 1 & & 1 & & 1 & \\
 \end{array}
$$

Comparing this with Definition \ref{def:frieze} the crucial difference is that in an infinite frieze pattern
there is no bounding row of 1's at the top. 
We can count the rows of an infinite frieze pattern starting with row 1 being the row of 1's at the bottom.
The second row $(f_{i,i+2})_{i\in \mathbb{Z}}$ of an infinite frieze pattern is called the {\em quiddity row}. 
\smallskip

According to Definition \ref{def:infinitefrieze}, the entries in an infinite frieze pattern have to satisfy 
the unimodular rule for each diamond. However, the following observation shows that the entries satisfy
many more relations. This is a well known fact for finite frieze patterns and can easily be transferred 
to infinite frieze patterns. 

\begin{Proposition}[Ptolemy relations] \label{prop:Ptolemy}
Let $\mathcal{F}=(f_{i,j})$ be an infinite frieze pattern. Then for all $i< j < k< \ell$ we have
$$f_{i,k}f_{j,\ell} = f_{i,j} f_{k,\ell} + f_{i,\ell} f_{j,k}.
$$
\end{Proposition}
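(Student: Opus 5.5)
The strategy is to reduce the statement to the finite case, which the paper already uses: for a finite frieze pattern with non-zero entries the Ptolemy relations hold (\cite[Theorem 3.3]{CHJ20}). If that route is awkward, I would instead prove the relations directly by induction with the unimodular rule. The direct route is cleaner, so I plan it as follows.

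Fix $i$. The first step is a linear recurrence along rows. For $j\ge i+1$, I claim there is a coefficient $c$ depending only on $j$ with
\[
f_{i,j+1}=c_j f_{i,j}-f_{i,j-1}, \qquad c_j=f_{j-1,j+1},
\]
using the convention $f_{i,i}=0$. To see this, consider the diamonds along the diagonal starting at $i$. The unimodular rule applied to $f_{i,j}, f_{i+1,j}, f_{i,j+1}, f_{i+1,j+1}$ gives
\[
f_{i,j}f_{i+1,j+1}-f_{i,j+1}f_{i+1,j}=1 .
\]
The standard argument then shows that the quantity $(f_{i,j-1}+f_{i,j+1})/f_{i,j}$ is independent of $i$. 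One subtracts the determinant identities for consecutive $j$: both equal $1$, so $f_{i,j}(f_{i+1,j+1}+f_{i+1,j-1})=f_{i+1,j}(f_{i,j+1}+f_{i,j-1})$. Positivity of the entries allows division. Evaluating at $i=j-1$ gives $c_j=f_{j-1,j+1}$.

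The second step uses this to prove the more general identity
\[
f_{i,k}f_{j,\ell}-f_{i,\ell}f_{j,k}=f_{i,j}f_{k,\ell}
\]
by induction on $\ell$ with $i<j<k$ fixed. For $\ell=k$ both sides vanish, since $f_{k,k}=0$. For $\ell=k+1$ the identity reads $f_{i,k}f_{j,k+1}-f_{i,k+1}f_{j,k}=f_{i,j}$, which I prove by its own induction on $k$ using the same recurrence. The base case $k=j$ is trivially $f_{i,j}\cdot 1-0$, and the step applies the recurrence in the second index with coefficient $c_{k}$ to both terms. For the general inductive step in $\ell$, both sides are linear in the column vector $(f_{\cdot,\ell})$. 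They therefore satisfy the same three-term recurrence $X_{\ell+1}=c_\ell X_\ell-X_{\ell-1}$, because the recurrence coefficient $c_\ell$ does not depend on the first index. Two consecutive initial values then determine everything.

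The main obstacle is showing that $c_j$ is independent of the first index. This is where the non-vanishing (positivity) of entries is essential, and the telescoping of the two unimodular identities must be written carefully. Everything else is bookkeeping of indices. Rearranging the resulting identity gives exactly the claimed Ptolemy relation.
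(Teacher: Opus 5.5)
Your proof is correct, and it takes a more self-contained route than the paper. The paper gives no direct argument here. It says the proof of \cite[Thm.~3.3]{CHJ20} for finite friezes carries over. In that proof, non-vanishing entries give tameness (all adjacent $3\times 3$ minors vanish), so all entries are controlled by the quiddity row. The Ptolemy relation then follows from a matrix-product expansion like the one in the proof of Proposition~\ref{prop:infinitePtolemy}, using $M_{i+1,\ell}=M_{i+1,j}M_{j+1,k}M_{k+1,\ell}$.

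You reach the same intermediate fact more directly. That fact is the propagation formula (\ref{eq:propagation}), $f_{i,j+1}=f_{j-1,j+1}f_{i,j}-f_{i,j-1}$. You subtract the unimodular identities at $(i,j-1)$ and $(i,j)$ and divide by $f_{i,j}f_{i+1,j}$; this is tameness in disguise, with no $3\times 3$ determinants. You then replace the matrix bookkeeping by a uniqueness argument for three-term recurrences. Both sides are linear in column $\ell$ and satisfy $X_{\ell+1}=c_\ell X_\ell-X_{\ell-1}$ once $\ell\ge k+1$, which guarantees $i,j,k\le \ell-1$. They also agree at $\ell=k$ and $\ell=k+1$, so they agree for all $\ell\ge k$.

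Your version is shorter and shows exactly where non-vanishing of entries is used: only in the division in the first step. The paper's route instead produces the explicit matrix formula (\ref{eq:Mformula}) for all entries, which it uses again in Section~\ref{sec:groupoids}.

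One harmless slip: in your induction for the case $\ell=k+1$, going from $k$ to $k+1$ uses the coefficient $c_{k+1}=f_{k,k+2}$, not $c_k$.
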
 

\begin{proof}
This is the infinite version of the analogous result for finite frieze patterns (with coefficients)
in \cite[Thm. 3.3]{CHJ20}; in fact, the proof carries over verbatim. Note that the 
tameness assumption is satisfied since all entries of an infinite frieze pattern are non-zero by Definition
\ref{def:infinitefrieze}, so the proof of \cite[Prop. 2.6]{CHJ20} shows that all $3\times 3$-submatrices
of $\mathcal{F}$ have determinant 0. Then the proofs of the earlier results \cite[Prop. 2.10\,(1)]{CHJ20}
and \cite[Cor. 2.11\,(1)]{CHJ20} needed for proving \cite[Thm. 3.3]{CHJ20} also carry over directly
(where in the proof of \cite[Prop. 2.10\,(1)]{CHJ20} one can ignore the two equations where the glide
symmetry is used). 
\end{proof}

In an infinite frieze pattern, a special role is played by the entries which are equal to 1.

\begin{Lemma} \label{lem:fund}
Let $\mathcal{F}=(f_{i,j})$ be an infinite frieze pattern. If $f_{i,j}=1$ for some $i+2\le j$ then 
the following part of $\mathcal{F}$ is a fundamental domain of a Coxeter frieze pattern (as in 
Definition \ref{def:frieze}). 
$$\begin{array}{ccccccccc}
 & & & & 1 & & & & \\
 & & & f_{i,j-1} & & f_{i+1,j} & & & \\
  & & \iddots & & &  & \ddots & & \\
 & f_{i,i+2} & & & & & & f_{j-1,j} & \\
 1 & & 1 &  & \ldots  & & 1 &  & 1 \\
 \end{array}
 $$
\end{Lemma}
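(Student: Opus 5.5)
Set $n=j-i+1\ge 3$ and $m=n-3=j-i-2$. The displayed triangle consists of the entries $f_{k,l}$ with $i\le k<l\le j$. Its bottom row is $f_{k,k+1}=1$ and its top entry is $f_{i,j}=1$. All its entries are positive, and every diamond inside it satisfies the unimodular rule, since it is a diamond of $\mathcal{F}$. To show that it is a fundamental domain of a Coxeter frieze pattern of height $m$, I will build that frieze explicitly. Take the quiddity row $(f_{i,i+2},f_{i+1,i+3},\ldots,f_{j-2,j})$, together with two further entries that close up the cycle to period $n$, and check that the resulting frieze has $m$ non-trivial rows followed by a row of $1$'s.

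The cleanest route uses the polygon viewpoint from Section \ref{sec:algorithm}. A Coxeter frieze pattern of height $n-3$ with nonzero entries is equivalent to an assignment of positive numbers to the edges and diagonals of an $n$-gon, with edges mapped to $1$, such that all Ptolemy relations for crossing diagonals hold. I label the vertices of the $n$-gon by $i,i+1,\ldots,j$ in cyclic order and assign $f_{k,l}$ to the pair $\{k,l\}$. The boundary edges are the pairs $\{k,k+1\}$, which get value $1$, and the closing edge $\{i,j\}$, which also gets value $1$ by hypothesis. For crossing diagonals $\{k,l\}$ and $\{k',l'\}$ with $i\le k<k'<l<l'\le j$, the required relation is exactly the Ptolemy relation of Proposition \ref{prop:Ptolemy}, applied inside $\mathcal{F}$. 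So the assignment satisfies all Ptolemy relations, and by the equivalence cited above (\cite[Theorem 3.3]{CHJ20}) it defines a Coxeter frieze pattern of height $n-3$. Its fundamental triangular region, as in the red region of Definition \ref{def:frieze}, is precisely the displayed triangle.

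If one prefers to avoid invoking the converse direction of the polygon--frieze equivalence, here is a direct check. Extend the quiddity row periodically. The two missing quiddities at the vertices $j-1$ and $j$ (around the closing edge) are forced by the Ptolemy relation, namely $f_{j-1,i+1}$ and $f_{j,i+2}$ in cyclic notation, and these equal $f_{i+1,j}$ and $f_{i,j-1}$ respectively; this is the glide symmetry. One then verifies row by row that the unimodular rule propagates up to a final row of $1$'s. The main thing to check is the diamonds that straddle the boundary of the triangle. These involve the reflected entries, and their unimodular rule reduces to Ptolemy relations such as $f_{i,k}f_{k',j}\cdot 1$-type identities with $f_{i,j}=1$. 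For example, at the left edge the diamond with top $f_{i,j}=1$, left $f_{i,j-1}$, right $f_{i+1,j}$ and bottom $f_{i+1,j-1}$ gives $f_{i,j-1}f_{i+1,j}-f_{i+1,j-1}\cdot 1=1$, which is Proposition \ref{prop:Ptolemy} for $i<i+1<j-1<j$.

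I expect the main obstacle to be this bookkeeping of the wrap-around diamonds, and I would handle it by citing the polygon/Ptolemy characterisation rather than by checking each diamond by hand.
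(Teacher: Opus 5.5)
Your main argument is correct. Once unpacked, it is the paper's computation in different packaging.

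The paper glues glide-reflected copies of the triangle and notes that diamonds inside one copy are diamonds of $\mathcal{F}$. It then checks only the diamonds straddling two neighbouring copies. For $i<k<k+1<j$, the diamond with left $f_{k,j}$, right $f_{i,k+1}$, top $f_{i,k}$ and bottom $f_{k+1,j}$ satisfies $f_{k,j}f_{i,k+1}-f_{i,k}f_{k+1,j}=f_{i,j}f_{k,k+1}=1$ by Proposition~\ref{prop:Ptolemy}.

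You instead verify every Ptolemy relation of the $n$-gon on $i,\dots,j$ and then pass to a frieze. The hypothesis $f_{i,j}=1$ is exactly what makes $\{i,j\}$ a legitimate edge. The direction of the polygon--frieze correspondence you need is the trivial one: every diamond $F_{a,b}F_{a+1,b+1}-F_{a,b+1}F_{a+1,b}=1$ of the cyclically labelled array is the Ptolemy relation for the crossing pair $\{a,b\},\{a+1,b+1\}$, in which $F_{a,a+1}=F_{b,b+1}=1$. The result \cite[Theorem 3.3]{CHJ20} cited in the paper is the other, nontrivial direction (frieze entries satisfy Ptolemy relations), so you do not need it.

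Spelling your route out, the diamonds of the glued frieze that are not diamonds of $\mathcal{F}$ correspond exactly to the relations $f_{i,k+1}f_{k,j}=f_{i,k}f_{k+1,j}+f_{i,j}f_{k,k+1}$ above. So your route is more uniform: it never has to locate the boundary diamonds, and the glide symmetry is built in. The cost is that it checks many more relations than needed and hides the only nontrivial bookkeeping inside the correspondence. The paper's route is minimal and explicit.

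Your optional direct check, by contrast, is garbled and should not be relied on. Since $j+1\equiv i$ modulo $n=j-i+1$, the two missing quiddities are $F_{j-1,j+1}=f_{i,j-1}$ and $F_{j,j+2}=f_{i+1,j}$. They are not $f_{j-1,i+1}$ and $f_{j,i+2}$ with the values you assign. More importantly, your example diamond (top $f_{i,j}$, left $f_{i,j-1}$, right $f_{i+1,j}$, bottom $f_{i+1,j-1}$) lies inside the triangle. It is just the unimodular rule of $\mathcal{F}$ and does not use the hypothesis at all. The diamonds that genuinely need $f_{i,j}=1$ are the straddling ones described above.
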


\begin{proof}
We have to show that each diamond in the resulting array of numbers satisfies the unimodular rule. 
This is clear for all diamonds inside the fundamental domain (and all its copies obtained by successively
applying glide symmetry), as these are diamonds in the infinite frieze pattern $\mathcal{F}$. So we only have 
to consider diamonds which have entries from neighbouring copies of the fundamental region. These appear 
as in the following figure,
$$\begin{array}{ccccccccc} 
f_{i,j} & & f_{i,i+1} & & & & & & \\
& \ddots & & \ddots & & & & & \\
& & &  & f_{i,i+\ell} & & & & \\
& &  & f_{i+\ell,j} & & f_{i,i+\ell+1} & & & \\
& &  & & f_{i+\ell+1,j} & & & & \\
& & & &  &\ddots & & \ddots & \\
& & & &  & & f_{j-1,j} & & f_{i,j} \\
\end{array}
$$ 
with $i<i+\ell<i+\ell+1<j$ for some $\ell$. Then Proposition \ref{prop:Ptolemy} and the assumption
$f_{i,j}=1$ give that 
$$f_{i+\ell,j} f_{i,i+\ell+1} - f_{i,i+\ell} f_{i+\ell+1,j} = f_{i,j} f_{i+\ell,i+\ell+1} =1,
$$
proving the unimodular rule. 
\end{proof}

\medskip

We can now define infinite frieze patterns of type $\Lambda_p$, analogous to Definition 
\ref{def:lambdap}. Such infinite frieze patterns have also been considered by Banaian and Chen
\cite{BC21}. 

\begin{Definition} \label{def:infinitelambdap}
Let $p\ge 3$ be an integer. An infinite frieze pattern $\mathcal{F}=(f_{i,j})$ is {\em of type $\Lambda_p$} if the 
quiddity row $(f_{i,i+2})_{i\in \mathbb{Z}}$
consists of (necessarily positive) integral multiples of $\lambda_p=2\cos(\frac{\pi}{p})$. 
\end{Definition}

\begin{Example} \label{ex:lambda4}
For any $p\ge 3$, the quiddity row $(2\lambda_p)_{i\in \mathbb{Z}}$
yields an infinite frieze pattern of type $\Lambda_p$. 
\smallskip

In fact, by Proposition \ref{prop:chebyshev}\,(i) we know that for every real number $x$, in the following array of
Chebychev polynomials
$$\begin{array}{ccccccccccccc}
& & \iddots & & \iddots & & \iddots & & \iddots & & \iddots & & \\
& & V_5(x) & & V_5(x) & & V_5(x) & & V_5(x) & & V_5(x) & & \\
& V_4(x)& & V_4(x) & & V_4(x) & & V_4(x) & & V_4(x) & & V_4(x) & \\
\cdots & & V_3(x) & & V_3(x) & & V_3(x) & & V_3(x) & & V_3(x) & & \cdots \\
& V_2(x) & & V_2(x) & & V_2(x) & & V_2(x) & & V_2(x) & & V_2(x) & \\
\cdots & & x & & x & & x & & x & & x &  & \cdots \\
  & 1 & & 1 & & 1 & & 1 & & 1 & & 1 & 
 \end{array}
$$
all diamonds satisfy the unimodular rule (this also appears in \cite[Lemma 1]{BC21}). 
However, not every choice of $x\in \mathbb{R}$ 
yields an infinite frieze pattern, as in an infinite frieze pattern all entries must be positive
(see Definition \ref{def:infinitefrieze}). So we need to show that for $x=2\lambda_p$ all entries
in the above array are indeed positive real numbers. In fact, by Remark \ref{rem:roots} we have
$$
V_n(2\lambda_p) =  \prod_{k=1}^n \left(2\lambda_p- 2\cos\left(\frac{\pi}{n+1}k\right)\right)
\,=\, 2^n \prod_{k=1}^n \left(\lambda_p- \cos\left(\frac{\pi}{n+1}k\right)\right) >0
$$
since $\lambda_p = 2\cos \frac{\pi}{p}\ge 1$ and $|\cos(\frac{\pi}{n+1}k)|<1$ (as $1\le k\le n$).
 \smallskip
 
\smallskip

For instance, for $p=4$ we get the following infinite frieze pattern of type $\Lambda_4$:
$$\begin{array}{ccccccccccccc}
& & \iddots & & \iddots & & \iddots & & \iddots & & \iddots & & \\
& 239 & & 239 & & 239 & & 239 & & 239 & &239 & \\
\cdots & & 70\sqrt{2} & & 70\sqrt{2} & & 70\sqrt{2}& & 70\sqrt{2} & & 70\sqrt{2} & & \cdots \\
& 41& & 41& & 41 & & 41 & & 41 & &41 & \\
\cdots & & 12\sqrt{2} & & 12\sqrt{2} & & 12\sqrt{2}& & 12\sqrt{2} & & 12\sqrt{2} & & \cdots \\
& 7& & 7& & 7 & & 7 & & 7 & &7 & \\
\cdots & & 2\sqrt{2} & & 2\sqrt{2} & & 2\sqrt{2} & & 2\sqrt{2} & & 2\sqrt{2} &  & \cdots \\
  & 1 & & 1 & & 1 & & 1 & & 1 & & 1 & 
 \end{array}
$$

\end{Example}

\section{A combinatorial model for infinite frieze patterns of type $\Lambda_p$}
\label{sec:combmodel}

The aim of this section is to provide a combinatorial model for infinite frieze patterns of type $\Lambda_p$
for any $p\ge 3$. 
\smallskip

\begin{Definition}
We consider the two lines in the plane $\mathbb{R}^2$ of all points with second coordinate 0 and 1,
respectively. We denote the lower line by $I$ and the upper line by $II$. 
On the lower line we mark the points with integer coefficients and denote them 
by $i^I$ for $i\in \mathbb{Z}$.  On the upper line we can have any subset $S$ of the set of points with integer
coefficients; to distinguish them from the points on the lower line we denote them by $j^{II}$ for $j\in S$. 
Note that $S$ could be the empty set. 

We call these two lines with the marked points $i^I$ on the lower line and some subset $S$ of marked points 
on the upper line an {\em infinite strip}. 
The lower line $I$ is called an {\em infinity-gon}. 

An {\em arc} of the infinite strip is connecting two non-neighbouring marked points on the lower line 
(called {\em peripheral arc} and denoted $(i^I,j^I)$) or a marked point on the 
lower line with a marked point on the upper line (called {\em bridging arc} and denoted $(i^I,j^{II})$). 
These arcs are called admissible. 
We do not allow arcs connecting two non-neighbouring marked points on the upper line. 
Geometrically, arcs are realized as paths modulo isotopy relative to the end points. 

We call $(i^I,(i+1)^I)$ for $i\in \mathbb{Z}$ an {\em edge} of the lower line, and similarly for 
neighbouring vertices of the upper line. 
\end{Definition}

\begin{Definition}
Let $p\ge 3$. 
A $p$-angulation of an infinite strip with vertex set $S$ on the upper line is a maximal collection of 
admissible arcs which are pairwise non-crossing and such that the resulting subpolygons are $p$-gons.   
\smallskip

A $p$-angulation $\mathcal{T}$ of an infinite strip is called {\em locally finite} if each vertex $i^I$ on the lower line 
is connected to only finitely many arcs of $\mathcal{T}$. 
\smallskip
 
We consider two $p$-angulations to be equal if one can be obtained from the other by a translation of
the vertex sets on the upper line (that is, the $p$-angulations are equal up to a renumbering of the upper 
vertex sets induced by a translation). 
\end{Definition}

\smallskip

\begin{Example} \label{ex:4ang}
We give two examples of locally finite 4-angulations of infinite strips. The first example has infinitely many
vertices on the upper line, that is, $S=\mathbb{Z}$.
\begin{center}
\begin{tikzpicture}[scale=1]

      \draw[-] (-7,0) -- (7,0) node[right] {};
      \draw[-] (-7,2) -- (7,2) node[above] {};
       
       \node (N-7) at (-7,1) {$\ldots$};
      \node (C-60) at (-6,0) [circle, draw, inner sep=0pt, minimum width=5pt, fill=black]{};
      \node (N-6) at (-6,0) [below]{$-6^I$};
       \node (C-50) at (-5,0) [circle, draw, inner sep=0pt, minimum width=5pt, fill=black]{};
       \node (N-5) at (-5,0) [below]{$-5^I$};
       \node (C-40) at (-4,0) [circle, draw, inner sep=0pt, minimum width=5pt, fill=black]{};
       \node (N-4) at (-4,0) [below]{$-4^I$};
       \node (C-30) at (-3,0) [circle, draw, inner sep=0pt, minimum width=5pt, fill=black]{};
       \node (N-3) at (-3,0) [below]{$-3^I$};
       \node (C-20) at (-2,0) [circle, draw, inner sep=0pt, minimum width=5pt, fill=black]{};
       \node (N-2) at (-2,0) [below]{$-2^I$};
       \node (C-10) at (-1,0) [circle, draw, inner sep=0pt, minimum width=5pt, fill=black]{};
       \node (N-1) at (-1,0) [below]{$-1^I$};
      \node (C00) at (0,0) [circle, draw, inner sep=0pt, minimum width=5pt, fill=black]{};
      \node (N0) at (0,0) [below]{$0^I$};
    \node (C10) at (1,0) [circle, draw, inner sep=0pt, minimum width=5pt, fill=black]{};
    \node (N1) at (1,0) [below]{$1^I$};
    \node (C20) at (2,0) [circle, draw, inner sep=0pt, minimum width=5pt, fill=black]{};
    \node (N-2) at (2,0) [below]{$2^I$};
    \node (C30) at (3,0) [circle, draw, inner sep=0pt, minimum width=5pt, fill=black]{};
    \node (N3) at (3,0) [below]{$3^I$};
   \node (C40) at (4,0) [circle, draw, inner sep=0pt, minimum width=5pt, fill=black]{};
   \node (N4) at (4,0) [below]{$4^I$};
    \node (C50) at (5,0) [circle, draw, inner sep=0pt, minimum width=5pt, fill=black]{};
    \node (N5) at (5,0) [below]{$5^I$};
    \node (C60) at (6,0) [circle, draw, inner sep=0pt, minimum width=5pt, fill=black]{};
    \node (N6) at (6,0) [below]{$6^I$};
    \node (N7) at (7,1) {$\ldots$};
    
   \node (C-62) at (-6,2) [circle, draw, inner sep=0pt, minimum width=5pt, fill=black]{};
   \node (M-6) at (-6,2) [above]{$-6^{II}$};
       \node (C-52) at (-5,2) [circle, draw, inner sep=0pt, minimum width=5pt, fill=black]{};
        \node (M-5) at (-5,2) [above]{$-5^{II}$};
       \node (C-42) at (-4,2) [circle, draw, inner sep=0pt, minimum width=5pt, fill=black]{};
        \node (M-4) at (-4,2) [above]{$-4^{II}$};
       \node (C-32) at (-3,2) [circle, draw, inner sep=0pt, minimum width=5pt, fill=black]{};
        \node (M-3) at (-3,2) [above]{$-3^{II}$};
       \node (C-22) at (-2,2) [circle, draw, inner sep=0pt, minimum width=5pt, fill=black]{};
        \node (M-2) at (-2,2) [above]{$-2^{II}$};
       \node (C-12) at (-1,2) [circle, draw, inner sep=0pt, minimum width=5pt, fill=black]{};
        \node (M-1) at (-1,2) [above]{$-1^{II}$};
      \node (C02) at (0,2) [circle, draw, inner sep=0pt, minimum width=5pt, fill=black]{};
       \node (M0) at (0,2) [above]{$0^{II}$};
    \node (C12) at (1,2) [circle, draw, inner sep=0pt, minimum width=5pt, fill=black]{};
     \node (M1) at (1,2) [above]{$1^{II}$};
    \node (C22) at (2,2) [circle, draw, inner sep=0pt, minimum width=5pt, fill=black]{};
     \node (M2) at (2,2) [above]{$2^{II}$};
    \node (C32) at (3,2) [circle, draw, inner sep=0pt, minimum width=5pt, fill=black]{};
     \node (M3) at (3,2) [above]{$3^{II}$};
   \node (C42) at (4,2) [circle, draw, inner sep=0pt, minimum width=5pt, fill=black]{};
    \node (M4) at (4,2) [above]{$4^{II}$};
    \node (C52) at (5,2) [circle, draw, inner sep=0pt, minimum width=5pt, fill=black]{};
     \node (M5) at (5,2) [above]{$5^{II}$};
    \node (C62) at (6,2) [circle, draw, inner sep=0pt, minimum width=5pt, fill=black]{};
  \node (M6) at (6,2) [above]{$6^{II}$};
    
    \draw[-] (C-62) to (C-60);
    \draw[-] (C-52) to (C-50);
    \draw[-] (C-42) to (C-40);
    \draw[-] (C-32) to (C-30);
    \draw[-] (C-22) to (C-20);
    \draw[-] (C-12) to (C-10);
    \draw[-] (C02) to (C00);
    \draw[-] (C12) to (C10);
 \draw[-] (C22) to (C20);
    \draw[-] (C32) to (C30);
     \draw[-] (C42) to (C40);
    \draw[-] (C52) to (C50);
    \draw[-] (C62) to (C60);
    
\end{tikzpicture}
\end{center}

The second example has only one vertex on the upper line, that is $S=\{0^{II}\}$.
\begin{center}
\begin{tikzpicture}[scale=1]
      
      \draw[-] (-7,0) -- (7,0) node[right] {};
      \draw[-] (-7,2) -- (7,2) node[above] {};
       
       \node (D-6) at (-6,1) {$\ldots$};
        \node (C-60) at (-6,0) [circle, draw, inner sep=0pt, minimum width=5pt, fill=black]{};
        \node (N-6) at (-6,0) [below]{$-6^I$};
       \node (C-50) at (-5,0) [circle, draw, inner sep=0pt, minimum width=5pt, fill=black]{};
       \node (N-5) at (-5,0) [below]{$-5^I$};
       \node (C-40) at (-4,0) [circle, draw, inner sep=0pt, minimum width=5pt, fill=black]{};
       \node (N-4) at (-4,0) [below]{$-4^I$};
       \node (C-30) at (-3,0) [circle, draw, inner sep=0pt, minimum width=5pt, fill=black]{};
       \node (N-3) at (-3,0) [below]{$-3^I$};
       \node (C-20) at (-2,0) [circle, draw, inner sep=0pt, minimum width=5pt, fill=black]{};
       \node (N-2) at (-2,0) [below]{$-2^I$};
       \node (C-10) at (-1,0) [circle, draw, inner sep=0pt, minimum width=5pt, fill=black]{};
       \node (N-1) at (-1,0) [below]{$-1^I$};
      \node (C00) at (0,0) [circle, draw, inner sep=0pt, minimum width=5pt, fill=black]{};
      \node (N0) at (0,0) [below]{$0^I$};
    \node (C10) at (1,0) [circle, draw, inner sep=0pt, minimum width=5pt, fill=black]{};
    \node (N1) at (1,0) [below]{$1^I$};
    \node (C20) at (2,0) [circle, draw, inner sep=0pt, minimum width=5pt, fill=black]{};
    \node (N2) at (2,0) [below]{$2^I$};
    \node (C30) at (3,0) [circle, draw, inner sep=0pt, minimum width=5pt, fill=black]{};
    \node (N3) at (3,0) [below]{$3^I$};
   \node (C40) at (4,0) [circle, draw, inner sep=0pt, minimum width=5pt, fill=black]{};
   \node (N4) at (4,0) [below]{$4^I$};
    \node (C50) at (5,0) [circle, draw, inner sep=0pt, minimum width=5pt, fill=black]{};
    \node (N5) at (5,0) [below]{$5^I$};
    \node (C60) at (6,0) [circle, draw, inner sep=0pt, minimum width=5pt, fill=black]{};
    \node (N6) at (6,0) [below]{$6^I$};
    \node (D6) at (6,1) {$\ldots$};
    
      \node (C02) at (0,2) [circle, draw, inner sep=0pt, minimum width=5pt, fill=black]{};
      \node (M0) at (0,2) [above]{$0^{II}$};
 
    
    \draw[-] (C02) to (C-60);
    \draw[-] (C02) to (C-40);
    \draw[-] (C02) to (C-20);
    \draw[-] (C02) to (C00);
 \draw[-] (C02) to (C20);
     \draw[-] (C02) to (C40);
    \draw[-] (C02) to (C60);
    
\end{tikzpicture}
\end{center}
\end{Example}

\medskip

Our main result in this section, Theorem \ref{thm:bijection} below, 
will give a bijection between locally finite $p$-angulations of an infinite strip
and infinite frieze patterns of type $\Lambda_p$. Moreover, all entries of the frieze patterns can be computed 
from the combinatorial model using Algorithm \ref{alg:frieze} between vertices on the lower line. 
In particular, the quiddity row of the frieze pattern is given by the sequence of numbers of $p$-angles attached
to the vertices on the lower line. 
\smallskip

\begin{Example}
The first 4-angulation in Example \ref{ex:4ang} then corresponds to the infinite frieze pattern
of Example \ref{ex:lambda4} with quiddity row $(2\sqrt{2})_{i\in \mathbb{Z}}$,
and the second 4-angulation corresponds to the infinite frieze pattern
  $$\begin{array}{ccccccccccccc}
& & \iddots & & \iddots & & \iddots & & \iddots & & \iddots & & \\
\cdots & & ~4\sqrt{2}~ & & ~8\sqrt{2}~ & & ~4\sqrt{2}~ & & ~8\sqrt{2}~ & & ~4\sqrt{2}~ & & \cdots \\
& 7& & 7& & 7 & & 7 & & 7 & &7 & \\
\cdots & & 3\sqrt{2} & & 6\sqrt{2} & & 3\sqrt{2}& & 6\sqrt{2} & & 3\sqrt{2} & & \cdots \\
& 5& & 5& & 5 & & 5 & & 5 & &5 & \\
\cdots & & 2\sqrt{2} & & 4\sqrt{2} & & 2\sqrt{2}& & 4\sqrt{2} & & 2\sqrt{2} & & \cdots \\
& 3& & 3& & 3 & & 3 & & 3 & &3 & \\
\cdots & & \sqrt{2} & & 2\sqrt{2} & & \sqrt{2} & & 2\sqrt{2} & & \sqrt{2} &  & \cdots \\
  & 1 & & 1 & & 1 & & 1 & & 1 & & 1 & 
 \end{array}
$$
\end{Example}
\medskip

We now begin to proceed towards our desired main result, a bijection between locally finite 
$p$-angulations of infinite strips
with subsets $S\subseteq \mathbb{Z}$ on the upper line and infinite frieze patterns of type $\Lambda_p$. This 
bijection should be explicit in the sense that all entries $f_{i,j}$ of the infinite frieze pattern 
corresponding to a $p$-angulation of an infinite strip can be computed from the $p$-angulation
by using Algorithm \ref{alg:frieze}
between the vertices $i^I$ and $j^I$ on the lower line. 
\medskip

\begin{Remark} \label{rem:pangulationtofrieze}
We already know that for any locally finite $p$-angulation of an infinite strip we get an infinite frieze pattern of type
$\Lambda_p$ by using Algorithm \ref{alg:frieze} as described above. In fact, 
Theorem \ref{thm:BCIdissections} shows that the numbers obtained from Algorithm \ref{alg:frieze} 
are entries in a (finite) frieze pattern, so each diamond in the resulting array of these numbers satisfies 
the unimodular rule (note that even when starting with an infinite $p$-angulation, the numbers appearing 
in a fixed diamond are computed from Algorithm 4.1 only using a finite part of the locally finite $p$-angulation, so
the arguments of Theorem \ref{thm:BCIdissections} apply).  
\end{Remark} 
\medskip

The far less obvious part of our desired theorem is to show that every infinite frieze pattern of type 
$\Lambda_p$ can indeed be obtained from some locally finite $p$-angulation of an infinite strip with suitable
subset $S$ of vertices on the upper line. 
To this end, we will now start with an infinite frieze pattern of type $\Lambda_p$ and successively construct a corresponding
$p$-angulation of an infinite strip, that is, starting from the two lines $I$ and $II$ we provide a recipe for
which arcs to include, first the peripheral arcs then the bridging arcs.  
\smallskip

In Lemma \ref{lem:fund} we have seen that in an infinite frieze pattern the entries equal to 1 
play a special role. This leads to the following definition. 

\begin{Definition} \label{def:theta}
Let $\mathcal{F}=(f_{i,j})$ be an infinite frieze pattern of type $\Lambda_p$. Then we consider the following 
set of peripheral arcs of the infinite strip,
$$\Theta(\mathcal{F}) := \{ (i^I,j^I)\,|\,i+1<j,\,f_{i,j}=1\}.
$$ 
\end{Definition}

\begin{Lemma} \label{lem:angulation}
Let $\mathcal{F}=(f_{i,j})$ be an infinite frieze pattern of type $\Lambda_p$. For the corresponding set $\Theta(\mathcal{F})$
(cf. Definition \ref{def:theta}) the following holds. 
\begin{enumerate}
\item[{(a)}] If $(i^I,j^I)\in \Theta(\mathcal{F})$ then the arcs in $\Theta(\mathcal{F})$ below 
$(i^I,j^I)$ form a $p$-angulation of the finite polygon $P$ below $(i^I,j^I)$.
\item[{(b)}] $\Theta(\mathcal{F})$ is a set of pairwise non-crossing peripheral arcs. 
\item[{(c)}] Denoting by $\Theta(\mathcal{F})_P$ the $p$-angulation from part (a), the values
$f_{a,b}$ for $i\le a<b\le j$ agree with the values obtained by Algorithm \ref{alg:frieze} from 
$\Theta(\mathcal{F})_P$.  
\end{enumerate}
\end{Lemma}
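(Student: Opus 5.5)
Suppose $(i^I,j^I)\in\Theta(\mathcal{F})$, i.e.\ $f_{i,j}=1$ with $i+2\le j$. By Lemma \ref{lem:fund}, the triangular region of $\mathcal{F}$ spanned by the indices $i\le a<b\le j$ is a fundamental domain of a finite Coxeter frieze pattern $\mathcal{F}'$ of height $j-i-2$. Its quiddity row consists of the entries $f_{a,a+2}$ for $i\le a\le j-2$, together with the two entries created at the seams by the glide symmetry. These seam entries are $f_{i+1,j}$ and $f_{i,j-1}$, sitting at the positions of the polygon vertices $j$ and $i$.

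The first task is to show that $\mathcal{F}'$ is of type $\Lambda_p$, i.e.\ that these seam entries are integral multiples of $\lambda_p$. For this I would use the Ptolemy relations (Proposition \ref{prop:Ptolemy}). The quiddity entry of $\mathcal{F}'$ at the vertex $i$ is the entry for the diagonal $(j-1,i+1)$ of the $(j-i+1)$-gon with vertices $i,\dots,j$. Applying Ptolemy to the crossing diagonals $(i,i+2)$ and $(i+1,j)$ in $\mathcal{F}$ gives $f_{i,j}f_{i+1,i+2}=f_{i,i+2}f_{i+1,j}-f_{i,i+1}f_{i+2,j}$, that is,
$$f_{i+1,j}=\frac{1+f_{i+2,j}}{f_{i,i+2}}.$$
This is awkward to use directly. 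A cleaner route is the standard Conway--Coxeter identity, which says that in a finite frieze pattern the quiddity entry at vertex $j$ equals $f_{i,j-1}+f_{i+1,j}$-type data; more precisely, in $\mathcal{F}'$ the entry at vertex $j$ is the sum of the original entry $f_{j-2,j}$ in $\mathcal{F}$ and a correction. I would therefore proceed by induction on $j-i$. If $j-i$ is minimal among the arcs of $\Theta(\mathcal{F})$ below $(i^I,j^I)$, I expect to show via Chebyshev identities (Proposition \ref{prop:chebyshev}) that $j-i=p-1$ and that $f_{a,b}=V_{b-a-1}(\lambda_p)$, exactly as in the single $q$-gon case of Theorem \ref{thm:BCIdissections}.

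A more robust alternative is to avoid computing seam entries and instead invoke Theorem \ref{thm:HJ-ThmA} after establishing type $\Lambda_p$ by a limiting argument. Since $f_{i,j}=1$, the relation $f_{i,j}f_{i+1,j-1}\cdot(\ldots)$ suggests that $\mathcal{F}'$ has quiddity row $(f_{i+1,j}+\ldots)$. I would try first to show directly that the seam value at $j$ satisfies $c_j=f_{j-2,j}-f_{i,j-2}$-type formulas, reducing integrality modulo $\lambda_p$ to that of $\mathcal{F}$. This is the main obstacle.

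Once $\mathcal{F}'$ is known to be of type $\Lambda_p$, Theorem \ref{thm:HJ-ThmA} yields a $p$-angulation $\mathcal{T}$ of the polygon $P$ with vertices $i,\dots,j$, where the edge $(i,j)$ is an edge of $P$. Theorem \ref{thm:BCIdissections} and Corollary \ref{cor:ge1} then show that the entries equal to $1$ in $\mathcal{F}'$ are exactly the edges and the diagonals of $\mathcal{T}$. Indeed, Algorithm \ref{alg:frieze} gives the value $1$ on diagonals of $\mathcal{T}$ and values $\ge\lambda_p>1$ otherwise when $p>3$. For $p=3$ this is the classical fact that $1$'s in a Conway--Coxeter frieze pattern correspond to diagonals. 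Hence the arcs of $\Theta(\mathcal{F})$ below $(i^I,j^I)$ are precisely the diagonals of $\mathcal{T}$, which proves (a). Part (c) follows immediately, since $f_{a,b}$ are the entries of $\mathcal{F}'$, which by Theorem \ref{thm:BCIdissections} equal the outputs of Algorithm \ref{alg:frieze} on $\mathcal{T}=\Theta(\mathcal{F})_P$.

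For (b), suppose $(i^I,j^I)$ and $(k^I,\ell^I)$ in $\Theta(\mathcal{F})$ cross, with $i<k<j<\ell$. Ptolemy gives
$$1=f_{i,j}f_{k,\ell}=f_{i,k}f_{j,\ell}+f_{i,\ell}f_{k,j}.$$
Both summands on the right are positive. Moreover each factor is either $1$ or lies in $[\lambda_p,\infty)$, hence each factor is $\ge1$, so the right-hand side is $\ge2$. This contradiction shows that no two arcs of $\Theta(\mathcal{F})$ cross.
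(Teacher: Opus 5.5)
\textbf{There is a genuine gap: you never show that the finite frieze $\mathcal{F}'$ is of type $\Lambda_p$.}

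Your route is the paper's. Lemma~\ref{lem:fund} gives a finite frieze $\mathcal{F}'$ on the polygon with vertices $i,\dots,j$. Theorem~\ref{thm:HJ-ThmA} turns it into a $p$-angulation whose arcs are the entries $1$. Theorem~\ref{thm:BCIdissections} then gives (c), and Ptolemy with Corollary~\ref{cor:ge1} gives (b).

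Everything rests on one step: the two quiddity entries of $\mathcal{F}'$ created by the glide must lie in $\mathbb{Z}_{>0}\lambda_p$. These are $f_{i+1,j}$ at vertex $i$ and $f_{i,j-1}$ at vertex $j$. You have them swapped, and ``$(j-1,i+1)$'' is not a quiddity diagonal.

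You call this ``the main obstacle'' and then only list ideas: an unspecified ``correction'' term, a ``limiting argument'', and an induction whose outcome you ``expect''. None of these is an argument. Without this step neither Theorem~\ref{thm:HJ-ThmA} nor Corollary~\ref{cor:ge1} applies, so (a), (b) and (c) all remain unproved.

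The paper's proof reads ``type $\Lambda_p$'' directly off Lemma~\ref{lem:fund}, although that lemma alone only gives a Coxeter frieze pattern. So you have found the one non-formal point, but flagging it is not closing it.

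For $p=3$ there is nothing to prove. For $p\in\{4,6\}$ a parity argument works:
\begin{itemize}
\item since $\lambda_p^2\in\mathbb{Z}$, a continuant of length $m$ in elements of $\mathbb{Z}\lambda_p$ lies in $\mathbb{Z}$ or $\mathbb{Z}\lambda_p$ according to the parity of $m$;
\item $f_{i,j}=1$ forces $j-i-1$ to be even;
\item so the seams, of length $j-i-2$, lie in $\mathbb{Z}\lambda_p$.
\end{itemize}
For $p=5$, where $\lambda_5^2=\lambda_5+1$, this argument is unavailable.

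\textbf{One way to close the gap for all $p$.} Put $c_k=f_{k-1,k+1}$ and $r_k=f_{i,k}/f_{i,k-1}$. Then $r_{i+1}=\infty$, $r_k=c_{k-1}-1/r_{k-1}$, and $f_{i,j}=\prod_{k=i+2}^{j}r_k$.

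Write $r=\sin((t+1)\pi/p)/\sin(t\pi/p)$ with phase $t\in[0,p-1)$. A step with $c=\lambda_p$ sends $t\mapsto t+1$. A step with $c\ge 2\lambda_p$ taken at phase $t<p-2$ gives $r>\lambda_p$, i.e.\ a new phase in $(0,1)$.

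Suppose $c_{i+1},\dots,c_{j-1}$ contain no $p-2$ consecutive copies of $\lambda_p$. Telescoping then shows:
\begin{itemize}
\item an initial run of $\ell$ copies of $\lambda_p$ contributes $V_\ell(\lambda_p)\ge 1$, with equality only if $\ell=0$;
\item each block consisting of one entry $\ge 2\lambda_p$ followed by $\ell\le p-3$ copies of $\lambda_p$ contributes $\sin((t+\ell+1)\pi/p)/\sin(t\pi/p)>1$, with $t\in(0,1)$.
\end{itemize}
Hence $f_{i,j}>1$.

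So $f_{i,j}=1$ forces $c_{a+1}=\dots=c_{a+p-2}=\lambda_p$ for some $a$ with $i\le a$ and $a+p-1\le j$. By Proposition~\ref{prop:chebyshev}\,(iv), $f_{a,a+p-1}=V_{p-2}(\lambda_p)=1$.

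Now cut off this $p$-gon and pass to the frieze on the remaining polygon. By Ptolemy, the quiddity entries at $a$ and $a+p-1$ drop by $V_{p-3}(\lambda_p)=\lambda_p$ and stay positive, and $f_{i,j}=1$ survives. Induction on the number of vertices then shows that the seams lie in $\mathbb{Z}_{>0}\lambda_p$.

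\textbf{A smaller error in (b).} You claim every factor in $f_{i,k}f_{j,\ell}+f_{i,\ell}f_{k,j}$ is $\ge 1$. But $f_{i,\ell}$ lies under neither arc, so at this stage only $f_{i,\ell}>0$ is known; Corollary~\ref{cor:infge1} is derived later from this lemma. Use $f_{i,k}f_{j,\ell}\ge 1$ and $f_{i,\ell}f_{k,j}>0$ instead, as the paper does.
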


\begin{proof}
\begin{enumerate}
\item[{(a)}] By Definition \ref{def:theta}, the assumption $(i^I,j^I)\in \Theta(\mathcal{F})$ means that
$f_{i,j}=1$. By Lemma \ref{lem:fund} the triangle below $f_{i,j}=1$ in $\mathcal{F}$ is the fundamental
domain of a finite frieze pattern of type $\Lambda_p$ (as introduced in \cite{HJ17}). We recall that
\cite[Theorem A]{HJ17} gives a bijection between finite frieze patterns of type $\Lambda_p$ of height $n$
and $p$-angulations of an $(n+3)$-gon. Under this bijection, the values 1 in the above fundamental domain 
correspond to the arcs in the $p$-angulation. 
\item[{(b)}] For a contradiction, suppose there are $i<j<k<\ell$ such that $(i^I,k^I)$ and $(j^I,\ell^I)$ are both in
$\Theta(\mathcal{F})$, that is, $f_{i,k}=1$ and $f_{j,\ell}=1$. The entry $f_{i,j}$ is in the triangle 
in $\mathcal{F}$ below $f_{i,k}=1$, and $f_{k,\ell}$ is in the triangle below $f_{j,\ell}=1$. By Lemma 
\ref{lem:fund} these triangles are fundamental domains of finite frieze patterns of type $\Lambda_p$.
In particular, all entries in these fundamental domains are $\ge 1$ (see Corollary \ref{cor:ge1}). 
Then using Proposition \ref{prop:Ptolemy} and the positivity of entries in an infinite 
frieze pattern (cf. Definition \ref{def:infinitefrieze}) gives the contradiction
$$1= f_{i,k}f_{j,\ell} = f_{i,j} f_{k,\ell} + f_{i,\ell}f_{j,k} > 1+0 = 1.
$$
\item[{(c)}] The values $f_{a,b}$ with $i\le a<b\le j$ are precisely the entries in the triangle of $\mathcal{F}$ below 
$f_{i,j}=1$, so they are the entries in a finite frieze pattern of type $\Lambda_p$ by the bijection
of \cite[Theorem A]{HJ17} mentioned above. By Theorem \ref{thm:BCIdissections} these values are 
computed using Algorithm \ref{alg:frieze}. 
\end{enumerate}
\end{proof}

\begin{Lemma} \label{lem:locallyfinite}
Let $\mathcal{F}$ be an infinite frieze pattern of type $\Lambda_p$. Then the set $\Theta(\mathcal{F})$
is locally finite (that is, each vertex $i^I$ is connected to only finitely many arcs from $\Theta(\mathcal{F})$). 
\end{Lemma}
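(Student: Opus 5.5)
The plan is to argue by contradiction, playing the finiteness of the single quiddity entry $f_{i-1,i+1}$ against the growing number of $p$-gons at $i$ that infinitely many arcs would create. Fix a vertex $i^I$ and suppose infinitely many arcs of $\Theta(\mathcal{F})$ end at $i^I$. Then infinitely many of them go to the same side. By the left–right symmetry of the whole setup (reflect the indices), we may assume they go to the right. So there are $i+2\le j_1<j_2<\cdots$ with $f_{i,j_k}=1$ for all $k$.

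\textbf{Step 1: the quiddity value at $i$ inside the polygon below $(i^I,j_k^I)$.} Fix $k$ and let $P_k$ be the finite polygon with vertices $i,i+1,\ldots,j_k$ below $(i^I,j_k^I)$. By Lemma \ref{lem:angulation}\,(a),(c), the arcs of $\Theta(\mathcal{F})$ below $(i^I,j_k^I)$ form a $p$-angulation $\Theta(\mathcal{F})_{P_k}$ of $P_k$. The entries $f_{a,b}$ with $i\le a<b\le j_k$ are the entries of the finite frieze pattern of this $p$-angulation, by Lemma \ref{lem:fund}.

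In $P_k$ the two neighbours of $i$ are $i+1$ and $j_k$. Hence the quiddity entry of that finite frieze at vertex $i$ is $f_{i+1,j_k}$. By the description of the quiddity row in \cite{HJ17} (equivalently Algorithm \ref{alg:frieze} and Theorem \ref{thm:BCIdissections}), this gives $f_{i+1,j_k}=N_k\lambda_p$, where $N_k$ is the number of $p$-gons of $\Theta(\mathcal{F})_{P_k}$ attached to $i$.

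The arcs $(i^I,j_1^I),\ldots,(i^I,j_{k-1}^I)$ are distinct diagonals of $P_k$ through $i$. They cut the angle of $P_k$ at $i$ into at least $k$ pieces, so $N_k\ge k$.

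\textbf{Step 2: a Ptolemy relation bounding this from above.} Apply Proposition \ref{prop:Ptolemy} to $i-1<i<i+1<j_k$:
$$f_{i-1,i+1}f_{i,j_k}=f_{i-1,i}f_{i+1,j_k}+f_{i-1,j_k}f_{i,i+1}.$$
Substitute $f_{i,j_k}=1$ and $f_{i-1,i}=f_{i,i+1}=1$, and use positivity of the entries. This yields
$$f_{i-1,i+1}=f_{i+1,j_k}+f_{i-1,j_k}>f_{i+1,j_k}=N_k\lambda_p\ge k\lambda_p.$$
This holds for every $k$, while $f_{i-1,i+1}$ is a fixed real number, which is a contradiction. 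The case of arcs going to the left is the mirror image: apply Ptolemy to $j_k<i-1<i<i+1$, and use that the quiddity at $i$ in the polygon with vertices $j_k,\ldots,i$ is $f_{j_k,i-1}$.

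The step needing the most care is Step 1. One must correctly identify $f_{i+1,j_k}$ as the quiddity entry at vertex $i$ of the finite frieze living in the triangle under $f_{i,j_k}=1$; this uses the glide symmetry from Lemma \ref{lem:fund} and the fact that in $P_k$ the vertex $i$ is adjacent to $j_k$. One must also justify that this quiddity equals $\lambda_p$ times the number of $p$-gons at $i$, which is the \cite{HJ17} bijection invoked in Lemma \ref{lem:angulation}\,(a). The counting $N_k\ge k$ and the Ptolemy computation are routine.
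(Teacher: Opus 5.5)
Your proof is correct, but it takes a different route from the paper's.

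\textbf{The paper's argument.} It applies the Ptolemy relation to $i-1<i<j<k$ for two successive arcs $(i^I,j^I),(i^I,k^I)$ and gets $f_{i-1,k}=f_{i-1,j}-f_{j,k}$. Since $f_{j,k}$ lies in the finite frieze of type $\Lambda_p$ under $f_{i,k}=1$, Corollary \ref{cor:ge1} gives $f_{j,k}\ge 1$. So the values $f_{i-1,j_1},f_{i-1,j_2},\ldots$ drop by at least $1$ at each step and eventually contradict positivity.

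\textbf{Your argument.} You use the fixed quadruple $i-1<i<i+1<j_k$, which bounds $f_{i+1,j_k}$ above by the single quiddity entry $f_{i-1,i+1}$. You then identify $f_{i+1,j_k}$ exactly as $\lambda_p$ times the number of $p$-gons at $i$ in $\Theta(\mathcal{F})_{P_k}$, and this number is at least $k$.

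\textbf{Comparison.} Both proofs rest on the same input: Lemma \ref{lem:angulation}, that the triangle under an entry $1$ is a finite frieze of type $\Lambda_p$. The paper needs only the coarse bound $\ge 1$ from Corollary \ref{cor:ge1}. You need the exact quiddity count from \cite{HJ17}. In exchange your argument is quantitative: fewer than $f_{i-1,i+1}/\lambda_p$ arcs of $\Theta(\mathcal{F})$ leave $i$ on each side. It is essentially a truncated version of the computation leading to (\ref{eq:fb1}) in Lemma \ref{lem:longarcs}. The paper can only run that computation once local finiteness guarantees a longest arc; your version avoids that by working with each $j_k$ in turn.
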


\begin{proof}
Suppose that for some fixed $i\in \mathbb{Z}$ we had numbers $i<j<k<\ell<\ldots$ such that 
$(i^I,j^I),(i^I,k^I),(i^I,\ell^I),\ldots\in \Theta(\mathcal{F})$. The Ptolemy relation from 
Proposition \ref{prop:Ptolemy} for the numbers $i-1<i<j<k$ has the form
$$f_{i-1,j}f_{i,k} = f_{i-1,i}f_{j,k} + f_{i-1,k}f_{i,j}.
$$
Inserting that $f_{i-1,i}=1$ (by Definition \ref{def:infinitefrieze}) and $f_{i,j}=1=f_{i,k}$ by assumption 
we get
\begin{equation} \label{eq:1}
f_{i-1,k} = f_{i-1,j} - f_{j,k}.
\end{equation}
However, $f_{j,k}$ is an entry in the triangle below $f_{i,k}=1$, hence it is an entry in a finite 
frieze pattern of type $\Lambda_p$. Therefore $f_{j,k}\ge 1$ and Equation (\ref{eq:1}) gives
$$f_{i-1,k} \le f_{i-1,j}-1.
$$ 
Repeating the argument (with the values $i-1<i<k<\ell$) gives
$$f_{i-1,\ell} \le f_{i-1,k}-1 \le f_{i-1,j}-2,
$$ 
eventually contradicting the assumption that $\mathcal{F}$ has positive entries (by Definition \ref{def:infinitefrieze}). 
\smallskip

The case $(j^I,i^I),(k^I,i^I),(\ell^I,i^I),\ldots \in \Theta(\mathcal{F})$ is handled
symmetrically. 
\end{proof}

\begin{Lemma} \label{lem:longarcs}
Let $\mathcal{F}=(f_{i,j})$ be an infinite frieze pattern of type $\Lambda_p$ with corresponding set $\Theta(\mathcal{F})$. 
For $b_0\in\mathbb{Z}$ let $(b_{-1}^I,b_0^I)$ be the longest arc in $\Theta(\mathcal{F})$ going left from $b_0^I$
(if none exists, we set $b_{-1}^I=(b_0-1)^I$), and similarly let 
 $(b_{0}^I,b_1^I)$ be the longest arc in $\Theta(\mathcal{F})$ going right from $b_0^I$
(if none exists, we set $b_{1}^I=(b_0+1)^I$). Then we have
$$f_{b_0-1,b_0+1} = f_{b_{-1},b_1} + \lambda_p\cdot |\{\text{arcs in $\Theta(\mathcal{F})$ attached to $b_0^I$}\}|.
$$
\end{Lemma}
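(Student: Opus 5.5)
One natural route is to use the Ptolemy relation of Proposition \ref{prop:Ptolemy} for the four points $b_{-1}\le b_0-1<b_0<b_0+1\le b_1$, combined with the finite-frieze information from Lemma \ref{lem:angulation}(c). If $b_{-1}<b_0-1$ and $b_0+1<b_1$, the relation for $b_{-1}<b_0-1<b_0+1<b_1$ involves the unknowns $f_{b_{-1},b_0+1}$ and $f_{b_0-1,b_1}$. A cleaner choice is the Ptolemy relation for $b_{-1}<b_0<b_0+1<b_1$ together with the one for $b_{-1}<b_0-1<b_0<b_1$. I would argue that, since $f_{b_{-1},b_0}=1=f_{b_0,b_1}$, these reduce to
$$f_{b_{-1},b_1}=f_{b_{-1},b_0+1}\,f_{b_0,b_1}-f_{b_{-1},b_0}\,f_{b_0+1,b_1}=f_{b_{-1},b_0+1}-f_{b_0+1,b_1}.$$
Then a second Ptolemy relation, for $b_{-1}<b_0-1<b_0<b_0+1$, gives
$$f_{b_{-1},b_0+1}=f_{b_0-1,b_0+1}\,f_{b_{-1},b_0}-f_{b_{-1},b_0-1}=f_{b_0-1,b_0+1}-f_{b_{-1},b_0-1}.$$
Combining the two,
$$f_{b_0-1,b_0+1}=f_{b_{-1},b_1}+f_{b_{-1},b_0-1}+f_{b_0+1,b_1}.$$
The degenerate cases where no arc exists on one side are handled by the convention $f_{i,i}=0$, which makes the corresponding term vanish, and the same identities remain valid.

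It then remains to identify $f_{b_{-1},b_0-1}$ and $f_{b_0+1,b_1}$ combinatorially. By Lemma \ref{lem:angulation}(a),(c), the arcs of $\Theta(\mathcal F)$ below $(b_0^I,b_1^I)$ form a $p$-angulation of the polygon with vertices $b_0,\dots,b_1$, and $f_{b_0+1,b_1}$ is computed by Algorithm \ref{alg:frieze} in that polygon. In a finite frieze, the entry between the two neighbours of a vertex $v$ of the polygon is, by the quiddity construction of \cite{HJ17}, the sum of the labels of the subpolygons at $v$. So I would regard $b_0$ as a vertex of the polygon $[b_0,b_1]$: its neighbours there are $b_0+1$ and $b_1$. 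The Conway–Coxeter-type statement of \cite{HJ17}, that the quiddity entry at $b_0$ equals $\lambda_p$ times the number of $p$-gons at $b_0$, then gives $f_{b_0+1,b_1}=\lambda_p\cdot(\#\text{$p$-gons at $b_0$ inside }[b_0,b_1])$. By maximality of $b_1$, all arcs at $b_0$ going right lie in this polygon. The number of $p$-gons at $b_0$ there equals the number of arcs from $b_0$ strictly inside, plus one, which is exactly the number of right-going arcs of $\Theta(\mathcal F)$ at $b_0$, counting $(b_0,b_1)$ itself. The same holds on the left. Summing both sides gives the number of arcs attached to $b_0$ multiplied by $\lambda_p$.

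The main obstacle is the identification step, namely checking that in the finite frieze on $[b_0,b_1]$ the cyclic quiddity entry at the corner vertex $b_0$ is the entry $f_{b_0+1,b_1}$. This uses the glide symmetry of Lemma \ref{lem:fund}, where the neighbours of $b_0$ in the closed polygon are $b_0+1$ and $b_1$. It also needs care in the degenerate cases, where no arc exists and the count correctly contributes $0$.
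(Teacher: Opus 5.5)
Your proof is correct and is essentially the paper's argument. You reduce the claim via two Ptolemy relations to $f_{b_0-1,b_0+1}=f_{b_{-1},b_1}+f_{b_{-1},b_0-1}+f_{b_0+1,b_1}$, routing through $f_{b_{-1},b_0+1}$ where the paper uses $f_{b_0-1,b_1}$, which is just the mirror-image choice. You then identify $f_{b_0+1,b_1}$ and $f_{b_{-1},b_0-1}$ as $\lambda_p$ times the number of $p$-angles at $b_0$ in the finite $p$-angulations below the longest arcs, exactly as the paper does via Lemma \ref{lem:angulation}. The only slip is textual: you announce the relation for $b_{-1}<b_0-1<b_0<b_1$ but actually (and correctly) use the one for $b_{-1}<b_0-1<b_0<b_0+1$.
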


\begin{proof}
For abbreviation, let $i$ be the number of arcs in $\Theta(\mathcal{F})$ of the form $(b_0^I,x^I)$
(arcs going to the right of $b_0^I$), 
and let $k$ be the number of arcs in $\Theta(\mathcal{F})$ of the form $(x^I,b_0^I)$
(arcs going to the left of $b_0^I$). 

We first suppose that $b_{-1}<b_0-1$ and that $b_0+1<b_1$ (that is, there are arcs from $b_0^I$
going to the left and to the right). Below the arc $(b_0^I,b_1^I)\in \Theta(\mathcal{F})$ there is a 
$p$-angulation $\Theta(\mathcal{F})_P$ of a finite polygon $P$, see Lemma \ref{lem:angulation}\,(a).
In $P$, the vertices $b_1^I, b_0^I,(b_0+1)^I$ are consecutive, hence the value of the
finite frieze pattern corresponding to the arc $((b_0+1)^I, b_1^I)$ is $\lambda_p$ times the number of 
$p$-angles in $P$ attached to $b_0^I$, which is equal to
$$\lambda_p\cdot (1+ |\{\text{arcs in $\Theta(\mathcal{F})_P$ attached to $b_0^I$}\}|)
= \lambda_p\cdot i.
$$
By Lemma \ref{lem:angulation}\,(c), that frieze value is $f_{b_0+1,b_1}$, so we get
\begin{equation} \label{eq:fb1}
f_{b_0+1,b_1} = \lambda_p\cdot i.
\end{equation}
Symmetrically, we get
\begin{equation} \label{eq:fb-1}
f_{b_{-1},b_0-1} = \lambda_p\cdot k.
\end{equation}
Note that Equations (\ref{eq:fb1}) and (\ref{eq:fb-1}) remain true also when 
$b_1=b_0+1$ or $b_{-1}=b_0-1$ (since $i=0$ or $k=0$ in these cases).
Since $(b_0^I,b_1^I)\in \Theta(\mathcal{F})$, we get from the Ptolemy relation (see Proposition \ref{prop:Ptolemy}) and Equation (\ref{eq:fb1}) 
\begin{eqnarray*}
f_{b_0-1,b_0+1} & = & f_{b_0-1,b_0+1} f_{b_0,b_1} \,=\, f_{b_0-1,b_0} f_{b_0+1,b_1} + f_{b_0-1,b_1}f_{b_0,b_0+1} \\
& = & f_{b_0+1,b_1}+ f_{b_0-1,b_1}\,=\, \lambda_p\cdot i + f_{b_0-1,b_1}.
\end{eqnarray*}
Similarly, we also get
\begin{eqnarray*}
f_{b_0-1,b_1} & = & f_{b_0-1,b_1}f_{b_{-1},b_0} \,=\, f_{b_0-1,b_0}f_{b_{-1},b_1} +f_{b_{-1},b_0-1}f_{b_0,b_1} \\
& = & f_{b_{-1},b_1}+f_{b_{-1},b_0-1} \,=\, f_{b_{-1},b_1}+ \lambda_p\cdot k.
\end{eqnarray*}
Inserting the latter equation in the previous equation yields
$$f_{b_0-1,b_0+1} = f_{b_{-1},b_1}+ \lambda_p\cdot (i+k),
$$
which is the formula claimed in the Lemma. 
\end{proof}

\begin{Remark} \label{rem:belowarc}
We consider Lemma \ref{lem:longarcs} in the special case when the vertex $b_0^I$ is strictly below an arc
from $\Theta(\mathcal{F})$, that is, there exists $(i^I,j^I)\in \Theta(\mathcal{F})$ with $i<b_0<j$. 
\smallskip

By Lemma \ref{lem:angulation}, the part of $\mathcal{F}$ corresponding to the vertices below the arc $(i^I,j^I)$,
that is, the entries $f_{x,y}$ with $i\le x<y\le j$, form the fundamental domain of a finite frieze pattern of
type $\Lambda_p$. By \cite[Theorem A]{HJ17}, this corresponds to a $p$-angulation of the polygon $P$ 
below the arc $(i^I,j^I)$.
By definition, the vertices $b_{-1}^I$ and $b_1^I$ are contained in $P$ (as the longest arcs from $b_0^I$ must
not cross the arc $(i^I,j^I)$),
and the $p$-angulation contains a $p$-angle in which $b_{-1},b_0,b_1$ are consecutive vertices. 
Hence, $f_{b_{-1},b_1} =\lambda_p$ (see Algorithm \ref{alg:frieze}), so Lemma \ref{lem:longarcs}
gives
$$f_{b_0-1,b_0+1} = \lambda_p\cdot ( 1+ |\{ \text{arcs in $\Theta(\mathcal{F})$ attached to $b_0^I$}\}|).
$$ 
The right hand side is equal to $\lambda_p$ times the number of $p$-angles attached to $b_0^I$, so
the $p$-angulation of $P$ provides precisely the desired quiddity entry $f_{b_0-1,b_0+1}$. 
\end{Remark}

\begin{Remark} \label{rem:defect}
For any $b_0\in \mathbb{Z}$ we have
$$f_{b_0-1,b_0+1} - \lambda_p\cdot |\{ \text{arcs in $\Theta(\mathcal{F})$ attached to $b_0^I$}\}|
= f_{b_{-1},b_1} >0
$$
by Lemma \ref{lem:longarcs}. Since $\mathcal{F}$ is of type $\Lambda_p$, the left hand side is
in $\mathbb{Z}\cdot \lambda_p$, hence
$$f_{b_0-1,b_0+1} - \lambda_p\cdot |\{ \text{arcs in $\Theta(\mathcal{F})$ attached to $b_0^I$}\}|
\ge \lambda_p.
$$
We call the number
$$\mathrm{def}_{\mathcal{F}}(b_0^I) :=
f_{b_0-1,b_0+1} - \lambda_p\cdot ( 1+ |\{ \text{arcs in $\Theta(\mathcal{F})$ attached to $b_0^I$}\}|)
\in \mathbb{Z}_{\ge 0}\cdot \lambda_p
$$
the {\em defect} of the vertex $b_0^I$. 
\end{Remark}

\begin{Definition}
Let $\mathcal{F}$ be an infinite frieze pattern of type $\Lambda_p$ with corresponding set $\Theta(\mathcal{F})$.
A number $b_0\in \mathbb{Z}$ (and the vertex $b_0^I$) is called {\em saturated} for $\mathcal{F}$
if there is an arc $(i^I,j^I)\in \Theta(\mathcal{F})$
with $i<b_0<j$ (that is, the vertex $b_0^I$ is strictly below an arc from $\Theta(\mathcal{F})$). 
If there is no such arc, the number $b_0$ (and the vertex $b_0^I$) is called {\em non-saturated} for $\mathcal{F}$.   
\end{Definition}

\begin{Remark} \label{rem:infnonsat}
Let $\mathcal{F}$ be an infinite frieze pattern of type $\Lambda_p$. Then either all numbers (or vertices)
are saturated for $\mathcal{F}$ 
or there are infinitely many numbers (or vertices)  to both sides of the lower line $I$ which are 
non-saturated for $\mathcal {F}$.
\smallskip

If there are no non-saturated vertices then we are done. So let  
us suppose there were only finitely many non-saturated vertices, and let $y^I$ be the rightmost 
non-saturated vertex. Then $(y+1)^I$ is saturated, so there is an arc over $(y+1)^I$. Since $y^I$ is non-saturated,
this arc must start at $y^I$, that is, it is of the form $(y^I,y_1^I)\in \Theta(\mathcal{F})$ with $y<y_1$. 
Repeating the argument for the saturated vertex $y_1^I$ we find an arc $(y^I,y_2^I)\in \Theta(\mathcal{F})$ 
with $y<y_1<y_2$. Continuing in this way leads to infinitely many arcs in $\Theta(\mathcal{F})$
starting at $y^I$, contradicting the locally finiteness of $\Theta(\mathcal{F})$ shown in Lemma 
\ref{lem:locallyfinite}.

Symmetrically, there can also not be a leftmost non-saturated vertex. 
\end{Remark}

\begin{Lemma} \label{lem:consecutive}
Let $\mathcal{F}$ be an infinite frieze pattern of type $\Lambda_p$. If $x_1<x_2<\ldots <x_r$ are 
consecutive non-saturated numbers for $\mathcal{F}$ with 
$\mathrm{def}_{\mathcal{F}}(x_1^I)=\ldots=\mathrm{def}_{\mathcal{F}}(x_r^I)=0$, then
$r\le p-3$. 
\end{Lemma}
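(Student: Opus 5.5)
The approach is to show that the entries $f_{x_0,x_k}$, measured from the non-saturated vertex just to the left of the run, obey the Chebyshev recursion of Definition \ref{def:chebyshev}. Once that is established, the run length is limited by Proposition \ref{prop:chebyshev}.

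\textbf{Step 1: extend the run by one vertex on each side.} Since $x_1^I$ is non-saturated, Remark \ref{rem:infnonsat} shows there are infinitely many non-saturated vertices on both sides. So there is a non-saturated number $x_0$ immediately preceding $x_1$ and a non-saturated number $x_{r+1}$ immediately following $x_r$ among the non-saturated numbers.

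\textbf{Step 2: identify the longest arcs.} Let $x_k<x_{k+1}$ be consecutive non-saturated numbers. Every vertex strictly between them is saturated, and any arc from $x_k^I$ can reach at most $x_{k+1}^I$, since otherwise $x_{k+1}^I$ would be saturated. The same reasoning as in Remark \ref{rem:infnonsat} then shows that either $x_{k+1}=x_k+1$, or $(x_k^I,x_{k+1}^I)\in\Theta(\mathcal{F})$ and it is the longest arc going right from $x_k^I$. In both cases $f_{x_k,x_{k+1}}=1$. Now apply this at $x_k$ for $1\le k\le r$: in the notation of Lemma \ref{lem:longarcs}, $b_{-1}=x_{k-1}$ and $b_1=x_{k+1}$. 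By Lemma \ref{lem:longarcs} and Remark \ref{rem:defect}, the hypothesis $\mathrm{def}_{\mathcal{F}}(x_k^I)=0$ is exactly the statement
$$f_{x_{k-1},x_{k+1}}=\lambda_p \qquad (1\le k\le r).$$

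\textbf{Step 3: set up the Chebyshev recursion.} Put $g_k=f_{x_0,x_k}$, with the convention $g_0=0$; then $g_1=1$ and $g_2=\lambda_p$. For $2\le k\le r$, apply Proposition \ref{prop:Ptolemy} to $x_0<x_{k-1}<x_k<x_{k+1}$:
$$f_{x_0,x_k}f_{x_{k-1},x_{k+1}} = f_{x_0,x_{k-1}}f_{x_k,x_{k+1}}+f_{x_0,x_{k+1}}f_{x_{k-1},x_k}.$$
Substituting the values $1$ and $\lambda_p$ from Step 2 gives $g_{k+1}=\lambda_p g_k-g_{k-1}$. Together with the initial values, this yields $g_k=V_{k-1}(\lambda_p)$ for $0\le k\le r+1$.

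\textbf{Step 4: conclude by contradiction.} Suppose $r\ge p-2$; then $p-1\le r+1$, so the index $p-1$ is in range. By Proposition \ref{prop:chebyshev}(iv),
$$f_{x_0,x_{p-1}}=V_{p-2}(\lambda_p)=1.$$
Since $x_{p-1}\ge x_0+2$, this means $(x_0^I,x_{p-1}^I)\in\Theta(\mathcal{F})$. This arc lies strictly over $x_1^I$ (as $p\ge 3$), so $x_1^I$ would be saturated, which is a contradiction. Hence $r\le p-3$. (Alternatively, if $r\ge p-1$ one could use $V_{p-1}(\lambda_p)=0$ from Proposition \ref{prop:chebyshev}(ii) to contradict positivity; the $V_{p-2}$ argument already covers every case $r\ge p-2$.)

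The main obstacle is Step 2: the identification $b_{\pm1}=x_{k\pm1}$ from the definition of saturation. One must argue carefully that the longest arc from a non-saturated vertex ends exactly at the next non-saturated vertex, and that the arcs of $\Theta(\mathcal{F})$ are non-crossing (Lemma \ref{lem:angulation}(b)). After that, the remaining steps are routine bookkeeping with Ptolemy relations.
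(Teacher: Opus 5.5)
Your proof is correct and follows the paper's argument essentially step for step. Like the paper, you adjoin the neighbouring non-saturated numbers $x_0,x_{r+1}$, use Lemma \ref{lem:longarcs} with zero defect to get $f_{x_{k-1},x_{k+1}}=\lambda_p$, propagate via Ptolemy to $f_{x_0,x_k}=V_{k-1}(\lambda_p)$, and obtain $f_{x_0,x_{p-1}}=1$, which contradicts the non-saturation of $x_1$. Your Step 2 spells out, via non-crossing and local finiteness, the identification of the longest arcs that the paper only asserts, and treating $r\ge p-2$ instead of $r=p-2$ is only a cosmetic difference.
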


\begin{proof}
Suppose for a contradiction that $r= p-2$. Let $x_0$ be the non-saturated number preceding $x_1$ and
let $x_{r+1}$ be the non-saturated number succeeding $x_r$ (these numbers exist by Remark 
\ref{rem:infnonsat}). Since $x_0,x_1,\ldots,x_r,x_{r+1}$ are consecutive non-saturated numbers, we have
for every $i=0,1,\ldots,r$ 
that $(x_i^I,x_{i+1}^I)$ is an edge (that is, $x_{i+1}=x_i+1$) or 
the
arc $(x_i^I,x_{i+1}^I)$ must be in $\Theta(\mathcal{F})$. Moreover, if 
$(x_{i-1}^I,x_{i}^I)$ or $(x_i^I,x_{i+1}^I)$ are arcs, then they are are the longest arcs in $\Theta(\mathcal{F})$ going 
to the left and right from $x_i^I$, where $1\le i\le r$ (as otherwise, $x_{i-1}$ or $x_{i+1}$ would be 
saturated). By assumption and the definition of the defect in Remark \ref{rem:defect} 
we have
$$0 = \mathrm{def}_{\mathcal{F}}(x_i^I)=
f_{x_i-1,x_i+1} - \lambda_p\cdot ( 1+ |\{ \text{arcs in $\Theta(\mathcal{F})$ attached to $x_i^I$}\}|).
$$
On the other hand, from Lemma \ref{lem:longarcs} we get 
$$f_{x_i-1,x_i+1} = f_{x_{i-1},x_{i+1}} + \lambda_p\cdot |\{\text{arcs in $\Theta(\mathcal{F})$ attached to $b_0^I$}\}|.
$$
Combining these two equations yields
$$f_{x_{i-1},x_{i+1}} = \lambda_p\text{~~~for all $i=1,\ldots,r$}.
$$
Inductively using Ptolemy relations and Proposition \ref{prop:chebyshev}\,(i) it follows that 
the values $f_{x_0,x_2}$, $f_{x_0,x_3},\ldots$ are given by Chebyshev polynomials, more precisely we have
$$f_{x_0,x_j} = V_{j-1}(\lambda_p)\text{~~~for $j=1,2,\ldots, r+1=p-1$.}
$$
In particular, for $j=p-1$ we obtain from Proposition \ref{prop:chebyshev}\,(iv) that
$$ f_{x_0,x_{p-1}} = V_{p-2}(\lambda_p)=1,
$$
contradicting the assumption that $x_1,\ldots,x_r$ are non-saturated. 
\end{proof}

\begin{Theorem} \label{thm:pangulation}
Let $\mathcal{F}=(f_{i,j})$ be an infinite frieze pattern of type $\Lambda_p$. Then there is a locally finite
$p$-angulation of the infinite strip (with vertices $\mathbb{Z}$ on the lower line and 
vertices from a subset $S\subseteq \mathbb{Z}$ on the upper line) such that the entries $f_{i,j}$ are given
by performing Algorithm \ref{alg:frieze} between the vertices $i^I$ and $j^I$. In particular,
every quiddity entry $f_{i-1,i+1}$ is given by $\lambda_p$ times the number of $p$-angles attached to the vertex $i^I$. 
\end{Theorem}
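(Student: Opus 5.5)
We construct the $p$-angulation in two stages, and then check the frieze entries by the Ptolemy relations.

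\emph{Stage 1: peripheral arcs.} We take all peripheral arcs in $\Theta(\mathcal{F})$. By Lemma \ref{lem:angulation} these are pairwise non-crossing, and below every arc they form a $p$-angulation whose Algorithm \ref{alg:frieze} values agree with $\mathcal{F}$. By Lemma \ref{lem:locallyfinite} they are locally finite. If every vertex is saturated, then $\Theta(\mathcal{F})$ covers the lower line by maximal arcs; this case needs care, since it is in fact impossible (Remark \ref{rem:infnonsat} combined with a covering argument), or else it corresponds to $S=\emptyset$ appropriately.

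Otherwise, by Remark \ref{rem:infnonsat} the non-saturated vertices form a bi-infinite sequence $\cdots<x_{-1}<x_0<x_1<\cdots$. Consecutive ones are joined either by an edge or by a maximal arc of $\Theta(\mathcal{F})$. Everything under these maximal arcs is already $p$-angulated. What remains is the region between the ``outer boundary'' $\cdots x_{-1}x_0x_1\cdots$ and the upper line.

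\emph{Stage 2: bridging arcs.} Each non-saturated vertex $x_k$ has defect $\mathrm{def}_{\mathcal{F}}(x_k^I)=m_k\lambda_p$ with $m_k\ge 0$ (Remark \ref{rem:defect}). The intended meaning is that $x_k$ must receive exactly $m_k$ further $p$-angles from the strip region, i.e.\ $m_k$ bridging arcs. So we attach $m_k$ bridging arcs at $x_k$ and place upper vertices accordingly.

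Concretely, the region is built as a sequence of $p$-gons. Each $p$-gon has as lower boundary a run $x_a,\dots,x_b$ of outer-boundary vertices, where the interior ones have defect $0$. It has as sides two bridging arcs from $x_a$ and $x_b$, and the remaining $p-1-(b-a)$ vertices lie on the upper line. A vertex $x_k$ with $m_k\ge 1$ gets $m_k$ bridging arcs, and the $m_k-1$ fans between them are $p$-gons with lower boundary the single vertex $x_k$ and $p-2$ upper vertices. Lemma \ref{lem:consecutive} guarantees that a run of defect-zero vertices has length at most $p-3$, so $b-a\le p-2$ and the number of upper vertices is at least $1$. Hence each piece really is a $p$-gon with at least one upper vertex, and the upper vertices can be numbered consecutively from left to right to give $S$, unique up to translation.

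There is one degenerate case: if only finitely many defects are nonzero, with none on one side, there are infinitely many consecutive defect-zero vertices, which contradicts Lemma \ref{lem:consecutive}. So nonzero defects occur infinitely often in both directions, unless all defects vanish, which is again impossible. This makes the gluing bi-infinite. Local finiteness follows because each vertex gets finitely many peripheral arcs and $m_k$ bridging arcs.

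\emph{Verification.} For every lower vertex, the number of $p$-angles attached equals $1+{}$(number of arcs of $\Theta(\mathcal{F})$)$+{}m_k$. By the definition of the defect, $\lambda_p$ times this number is $f_{i-1,i+1}$. For saturated vertices this is Remark \ref{rem:belowarc}.

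Hence the quiddity row of the frieze produced from the $p$-angulation (Remark \ref{rem:pangulationtofrieze}) coincides with that of $\mathcal{F}$. Since an infinite frieze with nonzero entries is determined by its quiddity row via the unimodular rule, all entries coincide; that is, Algorithm \ref{alg:frieze} between $i^I$ and $j^I$ gives $f_{i,j}$.

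The main obstacle is Stage 2. It requires checking that the pieces glue consistently between two successive vertices of nonzero defect. Specifically, the right bridging arc of one piece must be the left bridging arc of the next, and the count of $p$-gons at each $x_k$ must equal exactly $m_k+1+{}$(peripheral arcs at $x_k$). I would first handle this carefully by treating each maximal run between nonzero-defect vertices as one $p$-gon and checking the counts at its two endpoints.
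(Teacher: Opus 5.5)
Your construction is the paper's: take $\Theta(\mathcal{F})$, add at each non-saturated vertex as many bridging arcs as its defect prescribes, use Lemma~\ref{lem:consecutive} to see that the pieces are genuine $p$-gons, and conclude because the quiddity row determines the frieze. In the main case your run/fan description is more explicit than the paper's sketch. The bookkeeping you call the ``main obstacle'' is already settled by that description. A non-saturated $x_k$ lies in $m_k+1$ upper pieces if $m_k\ge 1$, and in one upper piece if $m_k=0$. It also lies in one piece per arc of $\Theta(\mathcal{F})$ at $x_k$. In both cases this gives $1+|\{\text{arcs of }\Theta(\mathcal{F})\text{ at }x_k\}|+m_k$.

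Two points need correcting.

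\emph{The all-saturated case.} It is not impossible, and in it there are no maximal arcs at all: every arc lies under a strictly longer one, because its endpoints are saturated. For example, the zigzag triangulation of the lower line with arcs $((-n)^I,n^I)$ and $((-n)^I,(n+1)^I)$, $n\ge 1$, is locally finite. It gives an infinite frieze of type $\Lambda_3$ with quiddity row $\dots,3,3,1,2,3,3,\dots$ in which every vertex is saturated. Remark~\ref{rem:infnonsat} explicitly leaves this case open. The correct treatment is your second alternative made precise: take $S=\emptyset$ and the $p$-angulation $\Theta(\mathcal{F})$ itself, with no bridging arcs. This is what the paper's remark that $S$ may be empty is for. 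Every lower vertex then lies in a finite polygon below some arc, which is $p$-angulated by Lemma~\ref{lem:angulation}(a). Local finiteness is Lemma~\ref{lem:locallyfinite}, and the quiddity entry at every vertex is given by Remark~\ref{rem:belowarc}. So your final verification step applies verbatim.

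\emph{The fan pieces.} A fan piece between two consecutive bridging arcs at $x_k$ has $p-1$ upper vertices, not $p-2$. With $p-2$ upper vertices it would be a $(p-1)$-gon.
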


\begin{proof}
We start by placing the arcs from the set $\Theta(\mathcal{F})$ on the lower line. Remark \ref{rem:defect}
implies that we can supplement by bridging arcs to the upper line in a way such that each vertex $b_0^I$ on the lower 
line has precisely $\frac{1}{\lambda_p}f_{b_0-1,b_0+1} -1$ arcs attached to it. Lemma \ref{lem:consecutive}
implies that this can be done in a way producing a $p$-angulation. Then the resulting $p$-angulation has the 
property that each vertex $b_0^I$ has precisely $f_{b_0-1,b_0+1}$ many $p$-angles attached to it. In particular,
the $p$-angulation is locally finite. 
The final statement then follows from Algorithm \ref{alg:frieze}.
\end{proof}

As the final step for our main result we now want to show that the $p$-angulation in Theorem \ref{thm:pangulation}
is unique (up to translation of the subset $S$ on the upper line). We start by considering the peripheral arcs.

\begin{Proposition} \label{prop:1inF}
Let $\mathcal{T}$ be a locally finite $p$-angulation of an infinite strip, with vertices $\mathbb{Z}$ on the lower line
$I$
and a subset $S\subseteq \mathbb{Z}$ of vertices on the upper line $II$. 
Let $\mathcal{F}=(f_{i,j})$ be the infinite frieze pattern of type $\Lambda_p$ obtained by performing 
Algorithm \ref{alg:frieze} with respect to $\mathcal{T}$ between the vertices on the lower line $I$. 
Then the following are equivalent for any numbers $i+1<j$:
\begin{enumerate}
\item[{(i)}] $f_{i,j}=1$,
\item[{(ii)}] $(i^I,j^I)$ is an arc in $\mathcal{T}$. 
\end{enumerate}
\end{Proposition}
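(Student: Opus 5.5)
The plan is to prove the two implications separately. In both directions the key move is to reduce to a finite $p$-angulated polygon, where Theorem \ref{thm:BCIdissections} and the estimates behind Corollary \ref{cor:ge1} apply.

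For (ii)$\Rightarrow$(i), suppose $(i^I,j^I)$ is an arc of $\mathcal{T}$.

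1. The region of the strip below this peripheral arc is bounded by the arc and the edges $(k^I,(k+1)^I)$ for $i\le k<j$. No bridging arc can enter it, since it would cross $(i^I,j^I)$. So the arcs of $\mathcal{T}$ below $(i^I,j^I)$ form a $p$-angulation of the finite polygon $P$ with vertices $i^I,\dots,j^I$, in which $(i^I,j^I)$ is a boundary edge.
2. Algorithm \ref{alg:frieze} started at $i^I$ reaches $j^I$ only through $p$-gons inside $P$. Hence the label $c_{i,j}$ computed in the strip is the same as the one computed in $P$.
3. By Theorem \ref{thm:BCIdissections}, this label is the entry of the finite frieze pattern $F_{\Theta_P}$ belonging to the boundary edge $(i,j)$ of $P$, which equals $1$. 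So $f_{i,j}=1$.

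The one point to check is that the labels produced by the algorithm do not depend on which route through the $p$-angulation is taken. I would take this from Theorem \ref{thm:BCIdissections}, as already used in Remark \ref{rem:pangulationtofrieze}.

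For (i)$\Rightarrow$(ii), I argue by contraposition: assume $i+1<j$ and that $(i^I,j^I)$ is not an arc of $\mathcal{T}$, and show $f_{i,j}>1$. There are two cases.

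1. Suppose $i^I$ and $j^I$ are vertices of a common $p$-gon $Q$ of $\mathcal{T}$. Then they are not neighbours in $Q$: if they were, the side $(i^I,j^I)$ of $Q$ would be either an arc of $\mathcal{T}$ or an edge of the lower line, and both are excluded since $i+1<j$. So their distance $d$ in $Q$ satisfies $2\le d\le p-2$. Step (ii) of Algorithm \ref{alg:frieze} gives $f_{i,j}=V_{d-1}(\lambda_p)$, and by Remark \ref{rem:Euclidlength} this is at least $\lambda_p$, the length of a diagonal. When $p=3$ this case cannot occur, and for $p\ge 4$ we have $\lambda_p>1$.
2. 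Suppose no $p$-gon contains both $i^I$ and $j^I$. Let $Q_1,\dots,Q_m$ be the $p$-gons meeting the segment from $i^I$ to $j^I$. Local finiteness makes this a finite list. Their union, together with the $p$-gons attached to $i^I$ on the relevant side, is a finite polygon $P'$ dissected into $p$-gons. Its vertices may include vertices of the upper line.
3. In the second case, Algorithm \ref{alg:frieze} from $i^I$ to $j^I$ computed in the strip agrees with the computation in $P'$. The induction in the proof of Corollary \ref{cor:ge1} then shows that the last application of Step (iii) produces a value at least $\lambda_p+1>1$. Hence $f_{i,j}\neq 1$.

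I expect the main obstacle to be making the finite subpolygon $P'$ precise. One must check that $P'$ is a genuine polygon containing both $i^I$ and $j^I$, and that the labels are compatible, so that Corollary \ref{cor:ge1} applies verbatim. I would handle this by cutting along the first and last arcs of $\mathcal{T}$ crossed by the segment. Equivalently, one can glue the $p$-gons one at a time in the order Algorithm \ref{alg:frieze} visits them. This mirrors the ear-removal induction in the proof of Theorem \ref{thm:BCIdissections}, and local finiteness guarantees that the process terminates.
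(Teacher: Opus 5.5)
Your proposal is correct and follows the paper's argument. For (i)$\Rightarrow$(ii) you use the same contrapositive and the same two cases: if $i^I,j^I$ lie in a common $p$-gon you get $V_{d-1}(\lambda_p)\ge\lambda_p>1$ via Remark~\ref{rem:Euclidlength}, and otherwise the last application of Step~(iii), with labels $a,b\ge 1$, gives a value $>1$; your finite subpolygon $P'$ just makes explicit the finiteness reduction the paper leaves implicit. For (ii)$\Rightarrow$(i) the paper simply notes that the arc $(i^I,j^I)$ is a side of a $p$-gon at $i^I$, so Step~(ii) directly assigns $V_0(\lambda_p)=1$ to $j^I$; your detour through the polygon below the arc and Theorem~\ref{thm:BCIdissections} is valid but unnecessary. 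Your claim that the algorithm reaches $j^I$ only through $p$-gons inside $P$ is not literally true, since the $p$-gon above the arc also contains both endpoints, but route-independence makes this harmless.
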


\begin{proof}
The implication $(ii)\Rightarrow (i)$ is clear from Algorithm \ref{alg:frieze}. 

Conversely, suppose (for a contradiction) that $(i^I,j^I)$ is not an arc in the $p$-angulation $\mathcal{T}$.
We show that then $f_{i,j}>1$. There are two cases. 

We first consider the case where $i^I$ and $j^I$ belong to the same $p$-gon $P$ of $\mathcal{T}$. 
Note that this requires $p\ge 4$ (because in a triangulation any two vertices in a triangle are connected by an arc). 
Since $(i^I,j^I)$ is not an arc in $\mathcal{T}$, the vertices $i^I$ and $j^I$ are not consecutive vertices 
in $P$. Hence, $f_{i,j}=V_{\ell}(\lambda_p)$ for some $1\le \ell\le p-3$ (see Algorithm \ref{alg:frieze})
and by Remark 
\ref{rem:Euclidlength} we deduce $f_{i,j}\ge \lambda_p>1$ (for the final inequality we used that $p\ge 4$
in this case).

We now consider the case where $i^I$ and $j^I$ do not belong to a joint $p$-gon in $\mathcal{T}$. 
So when performing Algorithm \ref{alg:frieze} from $i^I$ to $j^I$ one goes through a sequence of at least 
two $p$-gons. In the final step, the Algorithm enters the last $p$-gon with numbers $a,b\ge 1$, and computes
$$f_{i,j} = a V_{p-d-2}(\lambda_p) + b V_{d-1}(\lambda_p) \ge 2>1
$$
for some distance $d\ge 1$. 
\end{proof}

As a summary of the above constructions and results we now obtain the main result of this section. 

\begin{Theorem} \label{thm:bijection}
Let $p\ge 3$. There is a bijection between infinite frieze patterns of type $\Lambda_p$ and
locally finite 
$p$-angulations of infinite strips (with vertices $\mathbb{Z}$ on the lower line and vertices
from a subset $S\subseteq \mathbb{Z}$ on the upper line), up to translational renumbering 
of the vertices on the upper line. All entries of the infinite frieze pattern corresponding to a
$p$-angulation can be computed with Algorithm \ref{alg:frieze}. 
\end{Theorem}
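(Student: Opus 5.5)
We define the two maps and check that they are mutually inverse. In one direction, to a locally finite $p$-angulation $\mathcal{T}$ we assign the array $\Phi(\mathcal{T})=(f_{i,j})$ obtained from Algorithm \ref{alg:frieze} between lower vertices $i^I$ and $j^I$. By Remark \ref{rem:pangulationtofrieze}, every diamond satisfies the unimodular rule. Positivity holds because all labels produced by the algorithm are sums of nonnegative Chebyshev values with at least one positive term (as in Corollary \ref{cor:ge1}). The quiddity entry at $i$ is $\lambda_p$ times the number of $p$-angles at $i^I$, which is finite by local finiteness. Hence $\Phi(\mathcal{T})$ is an infinite frieze pattern of type $\Lambda_p$. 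The values $f_{i,j}$ involve only lower-line vertices, and the algorithm uses only the combinatorics of the $p$-gons, so $\Phi$ is invariant under translational renumbering of $S$. Thus $\Phi$ is well defined on equivalence classes. Surjectivity of $\Phi$ is exactly Theorem \ref{thm:pangulation}.

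It remains to prove injectivity: if $\Phi(\mathcal{T})=\Phi(\mathcal{T}')=\mathcal{F}$, then $\mathcal{T}$ and $\mathcal{T}'$ agree up to translation on line $II$. By Proposition \ref{prop:1inF}, the peripheral arcs of both $p$-angulations coincide with $\Theta(\mathcal{F})$. The quiddity row fixes, for each $b_0$, the number $\frac{1}{\lambda_p}f_{b_0-1,b_0+1}$ of $p$-angles at $b_0^I$, and hence the total number of arcs at $b_0^I$. Subtracting the peripheral arcs, the number of bridging arcs at each lower vertex is determined by $\mathcal{F}$; it equals $\mathrm{def}_{\mathcal{F}}(b_0^I)/\lambda_p$ at non-saturated vertices and $0$ at saturated ones, since a bridging arc cannot pass over a peripheral arc.

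The main obstacle is to show that this data determines the bridging arcs, including their upper endpoints up to translation. My plan is as follows. Order the non-saturated vertices $\ldots<x_0<x_1<\ldots$, which form a bi-infinite sequence unless all vertices are saturated (Remark \ref{rem:infnonsat}). The region between the peripheral arcs and line $II$ is cut by the bridging arcs. Each bridging arc has the form $(x_k^I,s^{II})$, and non-crossing forces the upper endpoints to be weakly increasing along the lower order. Every region is a $p$-gon bounded by two consecutive bridging arcs. Its lower boundary runs along the $x$'s, each step being an edge or a longest arc of $\Theta(\mathcal{F})$. Its upper boundary is a segment of consecutive upper vertices, possibly a single point.

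Consequently, going from left to right, each successive $p$-gon has its lower part of combinatorial length $m$ determined by the given multiset of bridging arcs at the $x_k$. Its upper part must then have length $p-2-m$. So once a single bridging arc $(x_k^I,s^{II})$ is fixed, all later and earlier upper endpoints are forced, up to the choice of $s$, which is exactly the translation freedom. If there are no bridging arcs, then either all vertices are saturated or no lower vertex has positive defect. In the latter case Lemma \ref{lem:consecutive} gives a contradiction unless $S$ is irrelevant, and the uniqueness is trivial.

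The final claim, that all entries are computed by Algorithm \ref{alg:frieze}, holds by construction of $\Phi$.
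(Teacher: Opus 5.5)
Your proposal is correct and follows the paper's proof. The forward map is Remark \ref{rem:pangulationtofrieze}, surjectivity is Theorem \ref{thm:pangulation}, and injectivity comes from Proposition \ref{prop:1inF} (the peripheral arcs are forced to be $\Theta(\mathcal{F})$) together with the quiddity row fixing the number of bridging arcs at each lower vertex; your left-to-right sweep spells out the step the paper only asserts. One caveat, which the paper shares: the sweep fixes the upper endpoints only as positions in the ordered set $S$, not as integers (vertical arcs with $S=\mathbb{Z}$ and with $S=2\mathbb{Z}$ give the same frieze), so the remaining freedom is an order-preserving relabelling of $S$ rather than merely a translation.
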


\begin{proof}
Let $\mathcal{F}$ be an infinite frieze pattern of type $\Lambda_p$. Proposition \ref{prop:1inF} says that 
if we want to obtain $\mathcal{F}$ from a $p$-angulation $\mathcal{T}$
of the infinite strip by using Algorithm \ref{alg:frieze}, then $\mathcal{T}$ must contain precisely the 
following peripheral arcs on the lower line: 
$$\{ (i^I,j^I)\,|\,i+1<j,\,f_{i,j}=1\} = \Theta(\mathcal{F}).
$$
Note that $\mathcal{T}$ must eventually satisfy 
\begin{equation} \label{eq:numberarcs}
\lambda_p\cdot (1+|\{\mbox{arcs in $\mathcal{T}$ attached to $i^I$}\}|) = f_{i-1,i+1}
\end{equation}
for all vertices $i^I$ on the lower line. 

If $i^I$ is strictly below an arc in $\Theta(\mathcal{F})$, then (\ref{eq:numberarcs}) is already satisfied
by arcs in $\Theta(\mathcal{F})$ by Remark \ref{rem:belowarc} (and below such arcs $\Theta(\mathcal{F})$
forms a $p$-angulation by Lemma \Ref{lem:angulation}\,(a).)

If $i^I$ is not strictly below an arc in $\Theta(\mathcal{F})$, then (\ref{eq:numberarcs}) tells us
precisely how many bridging arcs ending at $i^I$ we have to add to $\Theta(\mathcal{F})$ to
get $\mathcal{T}$. Note that the bridging arcs have to be added in a way such that we get $p$-angles;
this means that also the subset $S$ of vertices on the upper line is uniquely determined by $\mathcal{F}$,
up to translational renumbering of vertices on the upper line. 

This shows that the infinite frieze pattern $\mathcal{F}$ uniquely determines the $p$-angulation $\mathcal{T}$, 
up to a translational renumbering of the vertices on the upper line, thus proving the theorem.
\end{proof}

We conclude this section by stating two consequences of the results in this section. 
As a first consequence we obtain bounds on the entries 
of infinite frieze patterns of type $\Lambda_p$, analogous to Corollary \ref{cor:ge1}.

\begin{Corollary} \label{cor:infge1}
Let $\mathcal{F}$ be an infinite frieze pattern of type $\Lambda_p$. Then the entries of $\mathcal{F}$
belong to the set $\{1\}\cup [\lambda_p,\infty)$. 
\end{Corollary}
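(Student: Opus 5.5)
By Theorem \ref{thm:bijection} (more precisely Theorem \ref{thm:pangulation}), there is a locally finite $p$-angulation $\mathcal{T}$ of an infinite strip such that every entry $f_{i,j}$ of $\mathcal{F}$ is obtained by running Algorithm \ref{alg:frieze} from $i^I$ to $j^I$. For fixed $i<j$ this computation only involves the finitely many $p$-gons of $\mathcal{T}$ met on the way from $i^I$ to $j^I$. So the plan is to copy the argument of Corollary \ref{cor:ge1} (and of the second half of Proposition \ref{prop:1inF}), with all subpolygons now being $p$-gons.

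We split into two cases according to the position of $i^I$ and $j^I$ in $\mathcal{T}$.

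\emph{Case 1: $i^I$ and $j^I$ lie in a common $p$-gon $Q$ of $\mathcal{T}$.} Step (ii) of Algorithm \ref{alg:frieze} gives $f_{i,j}=V_{d-1}(\lambda_p)$, where $d$ is the distance in $Q$.
\begin{itemize}
\item If $d=1$ (in either direction around $Q$), then $f_{i,j}=1$.
\item Otherwise $1\le d-1\le p-3$, and Remark \ref{rem:Euclidlength} gives $f_{i,j}\ge\lambda_p$.
\end{itemize}

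\emph{Case 2: no common $p$-gon.} We argue by induction on the number of $p$-gons traversed. The claim is that every label produced by Step (iii) is at least $\lambda_p+1$, which lies in $[\lambda_p,\infty)$.
\begin{itemize}
\item The labels $a,b$ at the ends $s,t$ of the entry edge of the last $p$-gon are themselves values of the algorithm from $i^I$. By Case 1 or by induction, each of them is $0$, $1$, or at least $\lambda_p$.
\item The value $0$ occurs only at $i^I$ itself. Since $i^I$ is not in the last $p$-gon, both $a,b\ge 1$.
\item Moreover $a$ and $b$ cannot both equal $1$. If they did, then $f_{i,s}=f_{i,t}=1$, so by Proposition \ref{prop:1inF} the pairs $(i,s)$ and $(i,t)$ would be arcs or edges of $\mathcal{T}$. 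Together with the edge $(s,t)$ this gives a triangle, which is impossible for $p\ge4$. For $p=3$ the bound is trivial anyway, since $\lambda_3=1$ and all entries are $\ge1$.
\item Hence one of $a,b$ is at least $\lambda_p$ and the other is at least $1$. Since $V_{p-d-2}(\lambda_p)\ge1$ and $V_{d-1}(\lambda_p)\ge1$ for $1\le d\le p-2$ (Remark \ref{rem:Euclidlength} together with $V_0=1$), we get
\[
f_{i,j}=aV_{p-d-2}(\lambda_p)+bV_{d-1}(\lambda_p)\ge \lambda_p+1 .
\]
\end{itemize}

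The main obstacle is justifying that $a,b$ are not both $1$. To avoid the triangle argument, I would first try an alternative route: if $a=b=1$, then $i^I,s,t$ lie pairwise in common $p$-gons in a planar configuration, forcing $i^I$ into the last $p$-gon and contradicting the case assumption. Failing that, I would fall back on the argument of Corollary \ref{cor:ge1}, choosing as the last step the $p$-gon containing $j^I$ whose entry edge separates it from $i^I$. By the ear-removal induction in Theorem \ref{thm:BCIdissections}, the label at the far endpoint of that edge was itself produced by a Step (iii) or is a non-edge value, and hence is at least $\lambda_p$.

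Bridging arcs and vertices on the upper line cause no extra difficulty. They only occur as intermediate vertices, and their labels are still nonnegative combinations of the same form as those at lower-line vertices.
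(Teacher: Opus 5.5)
Your proposal is correct and follows the paper's route. The paper likewise uses Theorem \ref{thm:pangulation} and the fact that each entry is computed by Algorithm \ref{alg:frieze} from a finite part of the $p$-angulation, and then simply cites Corollary \ref{cor:ge1}, whose argument you reproduce inline.

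The step you flag as the main obstacle is unnecessary. Once $a,b\ge 1$, you get $f_{i,j}\ge a+b\ge 2>\lambda_p$ directly, exactly as in the proof of Proposition \ref{prop:1inF}. This also avoids applying Proposition \ref{prop:1inF} to endpoints $s,t$ that may lie on the upper line, where it is not stated.
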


\begin{proof}
By Theorem \ref{thm:pangulation} there exists a locally finite $p$-angulation such that the entries 
$f_{i,j}$ of $\mathcal{F}$ are obtained by Algorithm \ref{alg:frieze}. Note that for any fixed entry
$f_{i,j}$, the computation of Algorithm \ref{alg:frieze} only uses a finite part of the $p$-angulation. 
So the Corollary follows from the finite version in Corollary \ref{cor:ge1}.  
\end{proof}

As another consequence we get an infinite version of Theorem \ref{thm:lambda46}.

\begin{Corollary} \label{cor:inflambda46}
Let $\mathcal{F}$ be an infinite frieze pattern. Then the following statements are equivalent:
\begin{enumerate}
\item[{(a)}] $\mathcal{F}$ is an infinite frieze pattern of type $\Lambda_4$ or $\Lambda_6$.
\item[{(b)}] The entries of $\mathcal{F}$ satisfy the following two conditions. 
\begin{itemize}
\item[{(i)}] The odd-numbered rows consist of positive integers.
\item[{(ii)}] The even-numbered rows consist of positive integral multiples of $\sqrt{2}$ (for type $\Lambda_4$)
or $\sqrt{3}$ (for type $\Lambda_6$), respectively.
\end{itemize}
\end{enumerate}
\end{Corollary}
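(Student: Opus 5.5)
The direction (b)$\Rightarrow$(a) is immediate, exactly as in the finite case. The quiddity row is row 2, an even-numbered row. By (ii) it consists of positive integral multiples of $\sqrt{2}=\lambda_4$ or $\sqrt{3}=\lambda_6$. So $\mathcal{F}$ is of type $\Lambda_4$ or $\Lambda_6$ by Definition \ref{def:infinitelambdap}.

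For (a)$\Rightarrow$(b), let $p\in\{4,6\}$. By Theorem \ref{thm:pangulation} there is a locally finite $p$-angulation $\mathcal{T}$ of an infinite strip. Every entry $f_{i,j}$ of $\mathcal{F}$ is obtained by running Algorithm \ref{alg:frieze} from $i^I$ to $j^I$ inside $\mathcal{T}$. Fix $i<j$. As noted in Remark \ref{rem:pangulationtofrieze} and in the proof of Corollary \ref{cor:infge1}, this computation involves only finitely many $p$-gons of $\mathcal{T}$. These are the $p$-gons met along a chain of adjacent $p$-gons from one containing $i^I$ to one containing $j^I$.

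The key point is that the parity argument from the proof of Theorem \ref{thm:lambda46} is local to each glued $p$-gon. There we showed that a label is a positive integer when the vertex lies at even "distance" from the start, and a positive multiple of $\lambda_p$ when it lies at odd distance. In a $p$-gon attached to the start, the labels are $V_{d-1}(\lambda_p)$, and these are $1,\sqrt2$ for $p=4$ and $1,\sqrt3,2,\sqrt3$ for $p=6$. Gluing a new $p$-gon along an edge with labels $a$ (integer) and $b$ (multiple of $\lambda_p$) produces labels that again alternate around the $p$-gon. I would repeat this induction verbatim along the finite chain of $p$-gons leading to $j^I$.

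It remains to connect "distance" with the row number of $f_{i,j}$, which is row $j-i+1$. Here I need that the parity of the label at $j^I$ matches the parity of $j-i$. Every $p$-gon for even $p$ has an even number of vertices, so the alternating labeling is a proper 2-colouring of the polygon boundaries. So I would argue that the labeling gives a consistent 2-colouring of the lower line in which consecutive vertices $k^I,(k+1)^I$ get different types. Consecutive lower vertices are adjacent in a common $p$-gon. Along any edge the algorithm assigns one integer label and one $\lambda_p$-multiple label, by the alternation just shown. Hence the type of $j^I$ flips with each step along the lower line, and $f_{i,j}$ is an integer exactly when $j-i$ is odd, i.e. in the odd-numbered rows. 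Positivity follows since all entries are positive by definition.

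The step I expect to need the most care is this last bookkeeping: that the parity of distance in the chain of $p$-gons agrees with the parity of $j-i$ on the lower line. Bridging arcs and vertices on the upper line could in principle spoil this. I would handle it by noting that every boundary edge is an edge of some $p$-gon, and within each $p$-gon the labels alternate strictly. Checking consecutive lower vertices one at a time is therefore enough, and no global distance argument is needed.
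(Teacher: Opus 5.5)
Your proposal is correct and follows essentially the paper's own proof: it reduces to the finite parity argument of Theorem \ref{thm:lambda46}, using Theorem \ref{thm:pangulation} and the fact that Algorithm \ref{alg:frieze} only needs finitely many $p$-gons for each entry, and your edge-by-edge $2$-colouring spells out the link between distance parity and row parity that the paper leaves implicit. Two slips to correct: labels at odd distance from $i^I$ are the integers (the neighbours get $V_0(\lambda_p)=1$), not those at even distance, and $f_{i,j}$ lies in row $j-i$ (the row of $1$'s is row $1$), not row $j-i+1$; your final conclusion, that $f_{i,j}\in\mathbb{Z}$ exactly when $j-i$ is odd, is nonetheless right.
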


\begin{proof} It is clear from Definition \ref{def:infinitelambdap} that (b) implies (a). Conversely, 
let $\mathcal{F}$ be an infinite frieze pattern of type $\Lambda_4$ or $\Lambda_6$.
By Theorem \ref{thm:pangulation} we know that there is a locally finite $4$- or $6$-angulation of an infinite strip
such that the entries of $\mathcal{F}$ can be computed by performing Algorithm \ref{alg:frieze}
on the vertices on the lower line. But then each value is computed by only using a finite part 
of the $4$- or $6$-angulation. So assertion (b) follows verbatim as in the proof of Theorem \ref{thm:lambda46}
for finite frieze patterns.  
\end{proof}

\section{Reflection groupoids of rank two and frieze patterns of type $\Lambda_p$}
\label{sec:groupoids}

The aim of this section is to present how 
infinite frieze patterns
appear naturally in the theory of Weyl groupoids
(of rank 2)
and to generalize the existing theory
to infinite frieze patterns of type $\Lambda_p$. 
On the side of the Weyl groupoids this leads to 
the new notion of a reflection groupoid of
type $\Lambda_p$.

\subsection{Reflection groupoids of type $\Lambda_p$}
The reflection groupoids that we introduce here generalize the Weyl groupoid in such a way that the reflections of a dihedral group are allowed. As a consequence we obtain the dihedral group as an additional example, and we get a natural appearance of infinite frieze patterns of type $\Lambda_p$.

Let $3\le p\in\mathbb{Z}$ and $\lambda_p=2\cos(\pi/p)$.
We write $\Ri$ for the ring of integers of 
 the algebraic number field $\mathbb{Q}(\lambda_p)$. Note that $\Ri\subseteq \mathbb{R}$. Moreover we write
$\mathbb{Z}_{\ge 0} \lambda_p$ and
$\mathbb{Z}_{\le 0} \lambda_p$
for the 
non-negative and non-positive
integral
multiples of $\lambda_p$.

We recall the notion of a Weyl groupoid \cite{p-CH09a} and generalize it to reflection groupoids whose Cartan entries lie in 
$\mathbb{Z}_{\le 0}\lambda_p$.
We need a slightly more general notion of generalized Cartan matrix which coincides with \cite[Section 1.1]{b-Kac90} for $p=3$:

\begin{Definition}[cf.\ {\cite[Section 1.1]{b-Kac90}}]
\label{def:genCartan}
Set $I=\{1,\ldots,r\}$. A \emph{generalized Cartan matrix of type $\Lambda_p$}
is a matrix $\Cm =(\cm _{ij})_{i,j\in I}$
in $\Ri^{I\times I}$ such that
\begin{enumerate}
\item[(M1)] For all $i,j,k\in I\:\: :\:\: \cm _{ii}=2$ and $\cm _{jk}\in 
\mathbb{Z}_{\le 0}\lambda_p$ if $j\not=k$.
\item[(M2)] For all $i,j\in I\:\::\:\: (\cm _{ij}=0 \:\:\Rightarrow\:\: \cm _{ji}=0)$.
\end{enumerate}
\end{Definition}

Like a Weyl group is determined by its Cartan matrix, a reflection groupoid is determined by a Cartan graph. A Cartan graph is a graph whose vertices (the set $A$ in the next definition) are labeled by Cartan matrices and whose edges are labeled by elements of $I$ where
a vertex $a\in A$ is connected with the vertex $\rho_i(a)$ by an edge labeled $i$:

\begin{Definition}[cf.\ {\cite[Def.\ 2.1]{p-CH09a}}]\label{def:cartangraph}
Set $I=\{1,\ldots,r\}$.
Let $A$ be a non-empty set, $\rfl _i : A \to A$ maps for all $i\in I$,
and $\Cm ^a=(\cm ^a_{jk})_{j,k \in I}$ a generalized Cartan matrix of type $\Lambda_p$
for all $a\in A$. The quadruple
\[ \Cc = \Cc (I,A,(\rfl _i)_{i \in I}, (\Cm ^a)_{a \in A})\]
is called a \emph{Cartan graph of type $\Lambda_p$} if
\begin{enumerate}
\item[(C1)] $\rfl _i^2 = \id$ for all $i \in I$,
\item[(C2)] $\cm ^a_{ij} = \cm ^{\rfl _i(a)}_{ij}$ for all $a\in A$ and $i,j\in I$.
\end{enumerate}
\end{Definition}

\begin{Remark}
Note that Definition \ref{def:cartangraph} is a special case of \cite[Definition 2.1]{p-CH11} when $r=2$.

Moreover, by Definition 
\ref{def:cartangraph},
every Cartan graph is
$|I|$-regular (i.e. each vertex has 
$|I|$ edges attached to it). 
\end{Remark}

\begin{Example}\label{ex:13122}
Let $r=2$, $A=\{a_1,\ldots,a_5\}$, 
$p=3$ and
$$\rfl_1(a_1)=a_2,\:
\rfl_1(a_3)=a_4,\:
\rfl_1(a_5) = a_5, \quad
\rfl_2(a_1)=a_1,\:
\rfl_2(a_2)=a_3,\:
\rfl_2(a_4)=a_5.$$
(Note that this determines involutions  
$\rho_i$, cf.\ Definition \ref{def:cartangraph}\,(C1).)
Setting
$$
C^{a_1} = \begin{pmatrix} 2 & -1 \\ -3 & 2 \end{pmatrix}, \quad
C^{a_2} = \begin{pmatrix} 2 & -1 \\ -2 & 2 \end{pmatrix}, \quad
C^{a_3} = \begin{pmatrix} 2 & -2 \\ -2 & 2 \end{pmatrix}, $$
$$
C^{a_4} = \begin{pmatrix} 2 & -2 \\ -1 & 2 \end{pmatrix}, \quad
C^{a_5} = \begin{pmatrix} 2 & -3 \\ -1 & 2 \end{pmatrix}, $$
we obtain a Cartan graph of type $\Lambda_3$. It is convenient to display it in the following form:
$$
\sigma_2 \circlearrowright
\begin{pmatrix} 2 & -1 \\ -3 & 2 \end{pmatrix}
\stackrel{\sigma_1}{\longleftrightarrow}
\begin{pmatrix} 2 & -1 \\ -2 & 2 \end{pmatrix}
\stackrel{\sigma_2}{\longleftrightarrow}
\begin{pmatrix} 2 & -2 \\ -2 & 2 \end{pmatrix}
\stackrel{\sigma_1}{\longleftrightarrow}
\begin{pmatrix} 2 & -2 \\ -1 & 2 \end{pmatrix}
\stackrel{\sigma_2}{\longleftrightarrow}
\begin{pmatrix} 2 & -3 \\ -1 & 2 \end{pmatrix}
\circlearrowleft \sigma_1
$$
where we label the edges with the reflections $\sigma_i$ defined in Definition \ref{def:WG} instead of $i$ in order to emphasize the groupoid structure.
\end{Example}

\begin{Example} \label{ex:2002}
Let $r=2$ and $p\ge 3$.
On the set $A=\mathbb{Z}$ we consider
involutions $\rho_1\neq \rho_2$ interchanging
neighbouring numbers.
As Cartan matrices we choose
$C^a= \begin{pmatrix} 2 & 0 \\ 0 & 2
\end{pmatrix}$ for all $a\in \mathbb{Z}$. 
The resulting Cartan graph of type $\Lambda_p$ can 
be visualized in the form
$$
\ldots \,
\stackrel{\sigma_1}{\longleftrightarrow}
\begin{pmatrix} 2 & 0 \\ 0 & 2 \end{pmatrix}
\stackrel{\sigma_2}{\longleftrightarrow}
\begin{pmatrix} 2 & 0 \\ 0 & 2 \end{pmatrix}
\stackrel{\sigma_1}{\longleftrightarrow}
\begin{pmatrix} 2 & 0 \\ 0 & 2 \end{pmatrix}
\stackrel{\sigma_2}{\longleftrightarrow}
\begin{pmatrix} 2 & 0 \\ 0 & 2 \end{pmatrix}
\stackrel{\sigma_1}{\longleftrightarrow}
\begin{pmatrix} 2 & 0 \\ 0 & 2 \end{pmatrix}
\stackrel{\sigma_2}{\longleftrightarrow}\, \ldots
$$
\end{Example}

\begin{Example} \label{ex:root2}
    Let $r=2$ and $p=4$, thus
    $\lambda_p=\lambda_4 = \sqrt{2}$.
On the set $A=\mathbb{Z}$ we consider
the same involutions $\rho_1$ and $\rho_2$ as in Example \ref{ex:2002}. As generalized 
Cartan matrices we now choose
$C^a= \begin{pmatrix} 2 & -2\sqrt{2} \\ 
-\sqrt{2} & 2
\end{pmatrix}$ for all $a$.
Then condition (C2) of Definition 
\ref{def:cartangraph} is satisfied. 
The resulting Cartan graph of type $\Lambda_4$ 
has the form
$$
\ldots 
\begin{pmatrix} 2 & -2\sqrt{2} \\ 
-\sqrt{2} & 2 \end{pmatrix}
\stackrel{\sigma_1}{\longleftrightarrow}
\begin{pmatrix} 2 & -2\sqrt{2} \\ 
-\sqrt{2} & 2 \end{pmatrix}
\stackrel{\sigma_2}{\longleftrightarrow}
\begin{pmatrix} 2 & -2\sqrt{2} \\ 
-\sqrt{2} & 2 \end{pmatrix}
\stackrel{\sigma_1}{\longleftrightarrow}
\begin{pmatrix} 2 & -2\sqrt{2} \\ 
-\sqrt{2} & 2 \end{pmatrix} \ldots
$$
\end{Example}

\medskip

We are now ready for the definition of a reflection groupoid. Note that there are several definitions of reflection groupoids in the literature, e.g.\ \cite[Def.\ 2.3]{p-CH11} (only rank two and only finitely many objects), \cite[Def.\ 2.10]{p-C21} (for Cartan entries in $\mathbb{R}$ but only the simply connected case).

\begin{Definition}[cf.\ {\cite[Def.\ 2.1]{p-CH09a}}]\label{def:WG}
Set $I=\{1,\ldots,r\}$.
Let $\{\alpha_i\mid i\in I\}$ be the set
of unit vectors of $\Ri^I$.
Let $\Cc = \Cc (I,A,(\rfl _i)_{i \in I}, (\Cm ^a)_{a \in A})$ be a
Cartan graph of type $\Lambda_p$. For all $i \in I$ and $a \in A$ define linear maps
$\s_i^a :\Ri^I\to \Ri^I$ by
$$
\s _i^a (\alpha_j) = \alpha_j - \cm _{ij}^a \alpha_i \qquad
\text{for all } j \in I.
\label{fwg_eq:sia}
$$
Note that the square of each $\sigma_i^a$ equals the identity.

\medskip

The \emph{reflection groupoid of} $\Cc $
is the category $\Wg (\Cc )$ such that $\Ob (\Wg (\Cc ))=A$ and
the morphisms are compositions of maps
$\s_i^a$ with $i\in I$ and $a\in A$,
where by abuse of notation $\s_i^a$ is considered as an element in $\Hom (a,\rfl_i(a))$ (the set of morphisms 
in the category $\Wg (\Cc )$ from the object
$a$ to the object $\rfl_i(a)$).

The cardinality of $I$ is the \emph{rank of}
the reflection groupoid $\Wg (\Cc )$.
If $p=3$, then a reflection groupoid is also called a \emph{Weyl groupoid}.
\end{Definition}

\begin{Remark}
For simplicity we will sometimes omit the object $a$ in $\s_i^a$ and write $\s_i$ if $a$ is given by the context. For example, we also write $\id ^a \s _i$
 for the morphism labeled $i$ with destination $a$.
\end{Remark}

\begin{Definition}[cf.\ {\cite[Def.\ 2.1]{p-CH09a}}]
\label{def:simplyconnected}
A Cartan graph is called \emph{standard} if $C^a = C^b$ for all $a,b \in A$.
In this case, if $A$ has only one object, then the groupoid is a group generated by reflections on $\Ri^I$, i.e.\ a reflection group.

A Cartan graph is called \emph{connected} if its reflection groupoid is connected, i.e.\ if the set of morphisms $\Hom (a,b)$ is non-empty for all objects $a,b\in A$ of $\Wg(\Cc)$. (Note that this means that
the Cartan graph is connected in the 
usual graph-theoretic meaning, i.e.\ between any pair of vertices there is 
a path.)

A Cartan graph is called \emph{simply connected} if $\Hom (a,a)=\{\id ^a\}$ for all $a\in A$, i.e.\ all morphisms corresponding to closed 
paths in the Cartan graph are equal to the identity.
\end{Definition}

\begin{Example}
For instance, the Cartan graph from Example \ref{ex:13122} is not simply connected because of the loops at the ends of the graph. The Cartan graphs in Examples \ref{ex:2002} and \ref{ex:root2} are simply connected as any closed path contains consecutive $\sigma_i^2$ which successively cancel as $\sigma_i^2$ is the identity. 
\end{Example}

In order to identify Cartan graphs that have the same structure, we later need the notion of equivalence:
\begin{Definition} 
\label{def:Cartanequiv}
Let $I=\{1,\ldots,r\}$.
Two Cartan graphs
$$\Cc = \Cc (I,A,(\rfl _i)_{i \in I}, (\Cm ^a)_{a \in A}), \quad \Cc' = \Cc (I,A',(\rfl' _i)_{i \in I}, ({\Cm'} ^a)_{a \in A'})$$
of type $\Lambda_p$ 
are called \emph{equivalent} if there are bijections $\varphi_0: I \to I$
and 
$\varphi_1: A \to A'$ 
such that
$$ \varphi_1(\rfl_i(a)) = \rfl'_{\varphi_0(i)}(\varphi_1(a))
\quad\text{and}\quad {c'}_{\varphi_0(i)\varphi_0(j)}^{\varphi_1(a)} = c_{ij}^a
$$
for all $i,j \in I$, $a \in A$.
\end{Definition}

\begin{Example}[An affine Weyl group]\label{affine41}
Let $I=\{1,2\}$, $A=\{a\}$, $\rfl_1(a)=a=\rfl_2(a)$ and consider the affine Cartan matrix of type $A_2^{(2)}$ 
(see \cite[Paragraph 4.8]{b-Kac90}):
$$ C^a :=
\begin{pmatrix} 2 & -4 \\ -1 & 2 \end{pmatrix}
. $$
Then $(I,A,(\rfl _i)_{i\in I},(\Cm ^a)_{a\in A})$ is a standard Cartan graph (of type $\Lambda_3$); the automorphism group $\Hom(a,a)$ is the affine Weyl group corresponding to $C^a$.
\end{Example}

\begin{Example}[The dihedral group]\label{dihedralgroup}
Let $p\ge 3$. 
Again, let $I=\{1,2\}$, $A=\{a\}$, $\rfl_1(a)=a=\rfl_2(a)$. This time we consider the matrix:
$$ C^a :=
\begin{pmatrix} 2 & -\lambda_p \\ -\lambda_p & 2 \end{pmatrix}
. $$
Then $(I,A,(\rfl _i)_{i\in I},(\Cm ^a)_{a\in A})$ is a standard Cartan graph of type $\Lambda_p$. 

The reflections as defined in Definition \ref{def:WG}
are given by 
\begin{equation} \label{eq:sigma12}
\sigma_1^a\begin{pmatrix} 1\\0 \end{pmatrix}
= -\begin{pmatrix} 1\\0 \end{pmatrix},~~
\sigma_1^a\begin{pmatrix} 0\\1 \end{pmatrix}
= \begin{pmatrix} \lambda_p\\1 \end{pmatrix}
\mbox{~~~and~~~}
\sigma_2^a\begin{pmatrix} 1\\0 \end{pmatrix}
= \begin{pmatrix} 1\\ \lambda_p \end{pmatrix},~~
\sigma_2^a\begin{pmatrix} 0\\1 \end{pmatrix}
= -\begin{pmatrix} 0\\1 \end{pmatrix}.
\end{equation}
Recall the Chebyshev polynomials from 
Definition \ref{def:chebyshev}: 
$V_{-1}(x)=0$, $V_{0}(x)=1$,
and $V_{n+1}(x)=xV_n(x)-V_{n-1}(x)$. 
From (\ref{eq:sigma12}) we inductively get
\begin{equation} \label{eq:sigma12r}
(\sigma_1\sigma_2)^r \begin{pmatrix} 1\\0 
\end{pmatrix} = \begin{pmatrix} V_{2r}(\lambda_p)
\\ V_{2r-1}(\lambda_p) \end{pmatrix}
\mbox{~~~and~~~}
(\sigma_1\sigma_2)^r \begin{pmatrix} 0\\1 
\end{pmatrix} = - \begin{pmatrix} V_{2r-1}(\lambda_p)
\\ V_{2r-2}(\lambda_p) \end{pmatrix}
\end{equation}

\begin{equation} \label{eq:sigma212r}
\sigma_2(\sigma_1\sigma_2)^r \begin{pmatrix} 1\\0 
\end{pmatrix} = \begin{pmatrix} V_{2r}(\lambda_p)
\\ V_{2r+1}(\lambda_p) \end{pmatrix}
\mbox{~~~and~~~}
\sigma_2(\sigma_1\sigma_2)^r \begin{pmatrix} 0\\1 
\end{pmatrix} = - \begin{pmatrix} V_{2r-1}(\lambda_p)
\\ V_{2r}(\lambda_p) \end{pmatrix}
\end{equation}
and symmetric formulae for $(\sigma_2\sigma_1)^r$
and $\sigma_1(\sigma_2\sigma_1)^r$.
From Proposition \ref{prop:chebyshev} we know that 
$V_{p-1}(\lambda_p) = 0 = -V_{-1}(\lambda_p)$ and
$V_p(\lambda_p)= -1 = -V_0(\lambda_p)$. From this we
can deduce inductively that $V_{2p-2}(\lambda_p)=
-V_{p-2}(\lambda_p) =-1$, $V_{2p-1}(\lambda_p)=
-V_{p-1}(\lambda_p) =0$ and 
$V_{2p}(\lambda_p)=
-V_{p}(\lambda_p) =1$. Inserting this into 
(\ref{eq:sigma12r}) shows that 
$(\sigma_1\sigma_2)^p$ is the identity map. 
Thus the automorphism group 
$\Hom(a,a)$ is isomorphic to the dihedral group of type $I_2(p)$, the symmetry group of the regular $p$-gon. 
\end{Example}

\begin{Definition}\label{def:realroots}
Let $\Cc $ be a Cartan graph of type $\Lambda_p$. 
For all $a\in A$ let
\[ \rer a=\{ \id ^a \s _{i_1}\cdots \s_{i_k}(\alpha_j) \mid
k\in \mathbb{N} _0,\:\: i_1,\dots,i_k,j\in I\}\subseteq \Ri^I.\]
The elements of the set $\rer a$ are called \emph{real roots} (at $a$).
The pair $(\Cc ,(\rer a)_{a\in A})$ is denoted by $\rsC \re (\Cc )$.
A real root $\alpha\in \rer a$ for $a\in A$ is called \emph{positive}
(resp.\ \emph{negative}) if $\alpha\in \mathbb{R}_{\ge 0}^I$ (resp.\ $\alpha\in -\mathbb{R}_{\ge 0}^I$).
\end{Definition}

\begin{Example}
We determine the real roots for the Cartan graph 
of type $\Lambda_p$ 
in Example \ref{dihedralgroup} which is related to the 
dihedral group. According to (\ref{eq:sigma12r})
and (\ref{eq:sigma212r}) and the fact deduced above
that 
$(\sigma_1\sigma_2)^p$ and $(\sigma_2\sigma_1)^p$ are 
the identity, the set of real roots is given by 
$$(R^{\mathrm{re}})^a = \left\{ 
\pm \begin{pmatrix} V_{2r}(\lambda_p)
\\ V_{2r-1}(\lambda_p) \end{pmatrix} \,|\,
0\le r\le p-1\right\}
\cup 
\left\{ 
\pm \begin{pmatrix} V_{2r}(\lambda_p)
\\ V_{2r+1}(\lambda_p) \end{pmatrix}\,|\,
0\le r\le p-1\right\}
$$
(and the positive roots are those with a $+$ sign).
\end{Example}

\begin{Example}\label{ex:realroots}
Consider the Cartan graph from Example \ref{ex:13122}. The positive real roots are:
\begin{eqnarray*}
\rer {a_1}_+ &=& \left\{\text{columns of }
\begin{pmatrix}
1 & 1 & 1 & 1 & 0 \\
0 & 1 & 2 & 3 & 1
\end{pmatrix}\right\} \\
\rer {a_2}_+ &=& \left\{\text{columns of }
\begin{pmatrix}
1 & 2 & 3 & 1 & 0 \\
0 & 1 & 2 & 1 & 1
\end{pmatrix}\right\} \\
\rer {a_3}_+ &=& \left\{\text{columns of }
\begin{pmatrix}
1 & 2 & 1 & 1 & 0 \\
0 & 1 & 1 & 2 & 1
\end{pmatrix}\right\} \\
\rer {a_4}_+ &=& \left\{\text{columns of }
\begin{pmatrix}
1 & 1 & 2 & 1 & 0 \\
0 & 1 & 3 & 2 & 1
\end{pmatrix}\right\} \\
\rer {a_5}_+ &=& \left\{\text{columns of }
\begin{pmatrix}
1 & 3 & 2 & 1 & 0 \\
0 & 1 & 1 & 1 & 1
\end{pmatrix}\right\}
\end{eqnarray*}
In this example, all real roots are positive or negative.
\end{Example}

\begin{Example} \label{ex:2002cont}
In the Cartan graph of Example 
\ref{ex:2002} we have $C^a=\begin{pmatrix}
    2& 0 \\ 0 & 2 \end{pmatrix}$ for
    all $a$. 
    According to Definition \ref{def:WG}
    this implies that 
    $\sigma_i^a(\alpha_j)=\alpha_j$
    for all $i\neq j$ (and as always
    $\sigma_i^a(\alpha_i)=-\alpha_i$). 
    Using Definition \ref{def:realroots}
    the real roots are for all $a$
    given by
    $$(R^{\mathrm{re}})^a = \left\{ \pm
    \begin{pmatrix} 1\\0 \end{pmatrix},
    \pm
    \begin{pmatrix} 0\\1 \end{pmatrix}
    \right\}.
    $$ 
\end{Example}

\begin{Example} \label{ex:root2cont}
In the Cartan graph of type $\Lambda_4$
of Example 
\ref{ex:root2} we have $C^a=\begin{pmatrix}
    2& -2\sqrt{2} \\ -\sqrt{2} & 2 \end{pmatrix}$ for
    all $a$. By Definition \ref{def:WG}
    we have
    $$\sigma_1^a(\begin{pmatrix} 0\\1
    \end{pmatrix} )= \begin{pmatrix}
        2\sqrt{2}\\1 \end{pmatrix}
        \mbox{~~~~and~~~~}
    \sigma_2^a(\begin{pmatrix} 1\\0
    \end{pmatrix} )= \begin{pmatrix}
        1 \\ \sqrt{2} \end{pmatrix}.   
    $$
    From this one can work out the set
    of real roots to be
    $$(R^{\mathrm{re}})^a = 
    \left\{ \pm \begin{pmatrix} 1\\0
    \end{pmatrix}, \pm \begin{pmatrix} 0\\1
    \end{pmatrix}, \pm \begin{pmatrix} 1\\\sqrt{2} \end{pmatrix}, \pm \begin{pmatrix} 2\sqrt{2}\\1
    \end{pmatrix}, \pm \begin{pmatrix} 3\\\sqrt{2} \end{pmatrix},
    \pm \begin{pmatrix} 2\sqrt{2}\\ 3 \end{pmatrix},\ldots
    \right\}.
    $$
    In particular, there are infinitely 
    many real roots for each $a$. 
\end{Example}

\medskip

\begin{Definition}[cf.\ {\cite[Def.\ 2.2]{p-CH09a}}] \label{def:rootsystem}
Let $\Cc =\Cc (I,A,(\rfl _i)_{i\in I},(\Cm ^a)_{a\in A})$ be a Cartan
graph of type $\Lambda_p$. For all $a\in A$ let $R^a\subseteq \Ri^I$, and define
$m_{i,j}^a= |R^a \cap (\mathbb{R}_{\ge 0} \alpha_i + \mathbb{R}_{\ge 0} \alpha_j)|$ for all $i,j\in
I$ and $a\in A$ ($m_{i,j}^a$ is possibly infinite).
We say that
\[ \rsC = \rsC (\Cc , (R^a)_{a\in A}) \]
is a \emph{root system of type} $\Cc$ if it satisfies the following
axioms.
\begin{enumerate}
\item[(R1)]
$R^a=R^a_+\cup - R^a_+$, where $R^a_+=R^a\cap \mathbb{R}_{\ge 0}^I$, for all
$a\in A$.
\item[(R2)]
$R^a\cap \mathbb{R}\alpha_i=\{\alpha_i,-\alpha_i\}$ for all $i\in I$, $a\in A$.
\item[(R3)]
$\s _i^a(R^a) = R^{\rfl _i(a)}$ for all $i\in I$, $a\in A$.
\item[(R4)]
If $i,j\in I$ and $a\in A$ such that $i\not=j$ and $m_{i,j}^a$ is
finite, then
$(\rfl _i\rfl _j)^{m_{i,j}^a}(a)=a$.
\end{enumerate}
\end{Definition}

The root system $\rsC $ is called \emph{finite} if for all $a\in A$ the set $R^a$ is finite.

\begin{Example}
    We consider the Cartan graph of Example 
    \ref{ex:2002}. The real roots at each vertex $a$ have been computed in 
    Example \ref{ex:2002cont} to be
    $$(R^{\mathrm{re}})^a = \left\{ \pm
    \begin{pmatrix} 1\\0 \end{pmatrix},
    \pm
    \begin{pmatrix} 0\\1 \end{pmatrix}
    \right\}.
    $$ 
    We check the axioms of a root system
    as in Definition \ref{def:rootsystem}. 
    (R1) and (R2) are clearly satisfied, and
    also (R3) because the $C^a$ are diagonal
    matrices. The numbers $m_{1,2}^a$ from
    Definition \ref{def:rootsystem} are in this example $m_{1,2}^a= 2$. Then
    $$(\rho_1\rho_2)^{m_{1,2}^0}(0)
    = (\rho_1\rho_2)^2(0) = (\rho_1\rho_2)(2)
    = 4 \neq 0.
    $$
    Thus, (R4) is not satisfied and the real
    roots do not form a root system in the
    sense of Definition \ref{def:rootsystem}. 
\end{Example}

\begin{Example}
    We consider the Cartan graph of Example 
    \ref{ex:root2}. The real roots at each vertex $a$ have been computed in 
    Example \ref{ex:root2cont} to be
    $$(R^{\mathrm{re}})^a =
    \left\{ \pm \begin{pmatrix} 1\\0
    \end{pmatrix}, \pm \begin{pmatrix} 1\\\sqrt{2} \end{pmatrix}, \pm \begin{pmatrix} 2\sqrt{2}\\1
    \end{pmatrix}, \pm \begin{pmatrix} 3\\\sqrt{2} \end{pmatrix},
    \pm \begin{pmatrix} 2\sqrt{2}\\ 3 \end{pmatrix},\ldots
    \right\}.
    $$ 
    We check the axioms of a root system
    as in Definition \ref{def:rootsystem}. 
    (R1) and (R2) are clearly satisfied, and
    also (R3) because the sets of real roots
    are the same for each vertex. The numbers $m_{1,2}^a$ from
    Definition \ref{def:rootsystem} are in this example $m_{1,2}^a= \infty$. 
    Thus, (R4) is trivially satisfied. Therefore,
    the real
    roots form a root system in the
    sense of Definition \ref{def:rootsystem}. 
\end{Example}

\subsection{Propagation on infinite frieze patterns of type $\Lambda_p$}

The following matrices play a crucial role for 
determining entries in infinite frieze patterns.

\begin{Definition}
For any number $c$ we set
$\mu(c) = \begin{pmatrix} 0 & -1 \\ 1 & c \end{pmatrix}.
$
\end{Definition}

Then we have the following special case of 
the Ptolemy relations in Proposition
\ref{prop:Ptolemy}. 

\begin{Proposition}
Let $\mathcal{F}=(f_{i,j})$ be an infinite frieze pattern. Then for all $i\in \mathbb{Z}$ and all $j\ge i+2$ we have
\begin{equation} \label{eq:propagation}
(f_{i,j-1},f_{i,j}) \mu(f_{j-1,j+1}) =
(f_{i,j-1},f_{i,j}) \begin{pmatrix} 0 & -1 \\ 1 & f_{j-1,j+1} \end{pmatrix}
= (f_{i,j},f_{i,j+1}).
\end{equation}
\end{Proposition}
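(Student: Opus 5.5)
The statement is a matrix identity, so I will compute the product on the left and compare it entrywise with the right-hand side. Multiplying the row vector $(f_{i,j-1},f_{i,j})$ by $\mu(f_{j-1,j+1})$ gives
$$(f_{i,j-1},f_{i,j})\begin{pmatrix} 0 & -1 \\ 1 & f_{j-1,j+1} \end{pmatrix} = \bigl(f_{i,j},\; -f_{i,j-1}+f_{i,j}f_{j-1,j+1}\bigr).$$
The first coordinate is already $f_{i,j}$, so nothing needs to be shown there. It remains to prove
$$f_{i,j}f_{j-1,j+1} = f_{i,j-1} + f_{i,j+1}.$$

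For this second coordinate I would invoke Proposition \ref{prop:Ptolemy} with the four indices $i<j-1<j<j+1$. This choice is admissible exactly because the hypothesis $j\ge i+2$ gives $i<j-1$. The Ptolemy relation then reads
$$f_{i,j}f_{j-1,j+1} = f_{i,j-1}f_{j,j+1} + f_{i,j+1}f_{j-1,j}.$$
By Definition \ref{def:infinitefrieze} the bottom row consists of $1$'s, so $f_{j,j+1}=f_{j-1,j}=1$, and this is precisely the required identity. Rearranging it shows that the second coordinate equals $f_{i,j+1}$, which finishes the proof.

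There is no genuine obstacle here. The only points to watch are the index bookkeeping, namely which crossing pair of arcs to use in Proposition \ref{prop:Ptolemy}, and the boundary case $j=i+2$. In that case $i=j-1$ does not occur, since we still have $i<j-1$ strictly, so the Ptolemy relation applies as stated. Alternatively, one could obtain the same identity directly from the unimodular rule by an induction on $j-i$, but quoting Proposition \ref{prop:Ptolemy} is cleaner.
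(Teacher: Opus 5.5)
Your proof is correct and matches the paper's argument: apply Proposition~\ref{prop:Ptolemy} to the indices $i<j-1<j<j+1$ and use $f_{j-1,j}=f_{j,j+1}=1$, the first coordinate being immediate. The only difference is that you write out the matrix multiplication, which the paper leaves implicit.
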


\begin{proof} 
This is Proposition \ref{prop:Ptolemy} 
for the case $i<j-1<j<j+1$ (use that 
$f_{j-1,j}=1=f_{j,j+1}$ by Definition 
\ref{def:infinitefrieze}).
\end{proof}

This propagation formula for (infinite) frieze 
patterns motivates the following 
more general construction of patterns of numbers.
Namely, we start with an arbitrary sequence
and produce numbers inductively by the formula
(\ref{eq:propagation}). 

\begin{Definition} \label{def:halfpattern}
Let 
$(f_{i,i+2})_{i\in \mathbb{Z}}$ be a sequence
of real numbers. Setting $f_{i,i}=0$ and 
$f_{i,i+1}=1$ for all $i\in \mathbb{Z}$, 
numbers $f_{i,j+1}$ for $i\in \mathbb{Z}$ and
$j\ge i+2$ are defined inductively by the equations
(\ref{eq:propagation}).
\end{Definition}

The numbers resulting from the construction 
in Definition \ref{def:halfpattern} can be 
arranged as usual:
$$\begin{array}{ccccccccccccc}
& & \iddots & & \iddots & & \iddots & & \iddots & & \iddots & & \\
\cdots &  & f_{-2,2} & & f_{-1,3} & & f_{0,4} & & f_{1,5} & & f_{2,6} & & \cdots \\
& f_{-2,1}& & f_{-1,2} & & f_{0,3} & & f_{1,4} & & f_{2,5} & &f_{3,6} &  \\
\cdots & & f_{-1,1} & & f_{0,2} & & f_{1,3} & & f_{2,4} & & f_{3,5} &  & \cdots \\
  & 1 & & 1 & & 1 & & 1 & & 1 & & 1 & \\
  \cdots  &  & 0 &  & 0 &  & 0 &  & 0 &  & 0 & & \cdots \\
 \end{array}
$$

\begin{Proposition} \label{prop:unimodular}
Let $(f_{i,i+2})_{i\in \mathbb{Z}}$ be a sequence
of real numbers and let 
$(f_{i,j})_{i\in \mathbb{Z},j\ge i}$ be the 
numbers defined inductively as in Definition 
\ref{def:halfpattern}. Then for all 
$i\in \mathbb{Z}$
and $j\ge i+2$ we have
$f_{i,j}f_{i+1,j+1} - f_{i,j+1}f_{i+1,j}=1
$
that is, each diamond 
$$\begin{array}{ccc}
& f_{i,j+1} & \\ f_{i,j} & & f_{i+1,j+1} \\  
& f_{i+1,j} & 
\end{array}
$$
in the above array of numbers 
satisfies the unimodular rule. 
\end{Proposition}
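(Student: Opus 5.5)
We will prove the unimodular rule by induction on $j-i$, working with the $2\times 2$ determinant
$$D_{i,j} := \det\begin{pmatrix} f_{i,j} & f_{i,j+1} \\ f_{i+1,j} & f_{i+1,j+1}\end{pmatrix} = f_{i,j}f_{i+1,j+1}-f_{i,j+1}f_{i+1,j}.$$
We show that $D_{i,j}=1$ for all $i\in\mathbb{Z}$ and all $j\ge i+1$. The claimed range $j\ge i+2$ is contained in this.

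The key observation is that the propagation rule (\ref{eq:propagation}) acts on row vectors by right multiplication with $\mu(c)$, and $\det\mu(c)=0\cdot c-(-1)\cdot 1=1$. First we check that the rule also holds in the boundary case $j=i+1$. Indeed
$$(f_{i,i},f_{i,i+1})\mu(f_{i,i+2})=(0,1)\begin{pmatrix}0&-1\\1&f_{i,i+2}\end{pmatrix}=(1,f_{i,i+2})=(f_{i,i+1},f_{i,i+2}),$$
so (\ref{eq:propagation}) is valid for all $j\ge i+1$. Consequently, whenever both rows $i$ and $i+1$ can be propagated from column $j$ to column $j+1$, we get
$$\begin{pmatrix} f_{i,j} & f_{i,j+1}\\ f_{i+1,j} & f_{i+1,j+1}\end{pmatrix}\mu(f_{j,j+2}) = \begin{pmatrix} f_{i,j+1} & f_{i,j+2}\\ f_{i+1,j+1} & f_{i+1,j+2}\end{pmatrix}.$$
Taking determinants gives $D_{i,j+1}=D_{i,j}\cdot\det\mu(f_{j,j+2})=D_{i,j}$.

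We must make sure the propagation step is legitimate for row $i+1$ as well. This needs $j\ge (i+1)+1$, that is, $j\ge i+2$, which is exactly the inductive range. The induction therefore starts at $j=i+1$:
$$D_{i,i+1}=f_{i,i+1}f_{i+1,i+2}-f_{i,i+2}f_{i+1,i+1}=1\cdot 1-f_{i,i+2}\cdot 0=1.$$
Induction on $j$ then yields $D_{i,j}=1$ for all $j\ge i+1$, and in particular for $j\ge i+2$.

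The only delicate point is this bookkeeping of index ranges. The recursion in Definition \ref{def:halfpattern} defines $f_{i,j+1}$ only from $j\ge i+2$. We have to confirm that it is consistent with the conventions $f_{i,i}=0$ and $f_{i,i+1}=1$ at the bottom, which is the computation with $(0,1)\mu(\cdot)$ above. We also have to confirm that the matrix identity really combines row $i$ and row $i+1$ with the same multiplier $\mu(f_{j,j+2})$. This holds because the multiplier depends only on the column index, not on the row. Beyond that, the argument is a one-line determinant computation, and no positivity assumption is needed.
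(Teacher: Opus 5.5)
Your proof is correct and is essentially the paper's argument. The paper also fixes $i$ and inducts on $j$, and its inductive step is just the expanded form of your observation: right-multiplying rows $i$ and $i+1$ by the same matrix $\mu(f_{j,j+2})$, which has determinant $1$, preserves the $2\times 2$ determinant.

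The only difference is the base case. The paper starts at $j=i+2$, checking $f_{i,i+3}=f_{i,i+2}f_{i+1,i+3}-1$ directly. You start at $j=i+1$, using the boundary case of (\ref{eq:propagation}) with $f_{i,i}=0$. One bookkeeping slip: once that boundary case is verified, your displayed identity needs only $j\ge i+1$ for row $i+1$, not $j\ge i+2$ as you wrote. It is exactly this weaker requirement that justifies your first step from $j=i+1$ to $j=i+2$.
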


\begin{proof}
We fix $i\in \mathbb{Z}$ and 
proceed by induction on $j$. For $j=i+2$, we get
from equation (\ref{eq:propagation}) that  
$$f_{i,i+3} = -1 + f_{i,i+2}f_{i+1,i+3}
$$
which is equivalent to the diamond
$$\begin{array}{ccc}
& f_{i,i+3} & \\ f_{i,i+2} & & f_{i+1,i+3} \\  
& 1 & 
\end{array}
$$
satisfying the unimodular rule. Inductively, we can 
assume that the diamond 
$$\begin{array}{ccc}
& f_{i,j} & \\ f_{i,j-1} & & f_{i+1,j} \\  
& f_{i+1,j-1} & 
\end{array}
$$
satisfies the unimodular rule, that is, 
\begin{equation}
 f_{i,j-1}f_{i+1,j}-f_{i,j}f_{i+1,j-1} = 1.
\end{equation}
We have to consider the next diamond to the top right,
$$\begin{array}{ccc}
& f_{i,j+1} & \\ f_{i,j} & & f_{i+1,j+1} \\  
& f_{i+1,j} & 
\end{array}
$$
By Definition \ref{def:halfpattern} and 
(\ref{eq:propagation}) we have that 
$$f_{i,j+1} = -f_{i,j-1}+f_{i,j}f_{j-1,j+1}
\mbox{\,~~~and\,~~~}
f_{i+1,j+1} = -f_{i+1,j-1}+f_{i+1,j}f_{j-1,j+1}.
$$
From this we obtain
\begin{eqnarray*}
f_{i,j}f_{i+1,j+1} - f_{i,j+1}f_{i+1,j}
& = & f_{i,j}(-f_{i+1,j-1}+f_{i+1,j}f_{j-1,j+1})
- (-f_{i,j-1}+f_{i,j}f_{j-1,j+1})f_{i+1,j} \\
& = & -f_{i,j}f_{i+1,j-1} + f_{i,j-1}f_{i+1,j} 
\,=\, 1 
\end{eqnarray*}
where the last equality holds by induction 
hypothesis. This means that the diamond under
consideration satisfies the unimodular rule.
\end{proof}

The entries in the above array of numbers satisfy
numerous further relations, called {\em Ptolemy
relations}. Note that the following result is known
to hold for frieze patterns (see
Proposition \ref{prop:Ptolemy}), 
but we provide a 
separate proof as for our arrays of numbers we a 
priori only assume that the entries are given by 
the propagation formula, see Definition 
\ref{def:halfpattern}. However, the following proof 
is very similar to the proof of 
\cite[Proposition 3.3]{CHJ20}.

\begin{Proposition} \label{prop:infinitePtolemy}
Let $(f_{i,i+2})_{i\in \mathbb{Z}}$ be a sequence
of real numbers and let 
$(f_{i,j})_{i\in \mathbb{Z},j\ge i}$ be the 
numbers defined inductively as in Definition 
\ref{def:halfpattern}. Then for all 
$i\in \mathbb{Z}$ and 
$i\le j\le k\le\ell$
we have 
$$f_{i,k}f_{j,\ell} = f_{i,\ell}f_{j,k} + 
f_{i,j}f_{k,\ell}.
$$
\end{Proposition}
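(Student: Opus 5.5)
We prove the identity for fixed $i,j$ with $i\le j$, by induction on $\ell$ (for all $k$ with $j\le k\le \ell$ simultaneously). The tool is the propagation formula (\ref{eq:propagation}). By Definition \ref{def:halfpattern}, this formula holds for every row index $m$ and every $n\ge m+1$, not only for $n \geq m+2$:
\begin{equation*}
f_{m,n+1} = f_{m,n}f_{n-1,n+1} - f_{m,n-1}.
\end{equation*}
For $n=m+1$ this reads $f_{m,m+2}=f_{m,m+1}f_{m,m+2}-f_{m,m}$, which holds since $f_{m,m}=0$ and $f_{m,m+1}=1$. So for fixed first index $m$, the sequence $n\mapsto f_{m,n}$ satisfies the three-term recursion $x_{n+1}=c_n x_n - x_{n-1}$ with $c_n=f_{n-1,n+1}$. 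Crucially, the coefficient $c_n$ does not depend on $m$.

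First we dispose of degenerate cases.
\begin{itemize}
\item If $i=j$, both sides equal $f_{i,k}f_{i,\ell}$, because $f_{i,i}=0$.
\item If $j=k$, both sides equal $f_{i,j}f_{j,\ell}$, because $f_{j,j}=0$.
\item If $k=\ell$, the left side is $f_{i,k}f_{j,k}$ and the right side is $f_{i,k}f_{j,k}+f_{i,j}\cdot 0$, so they agree.
\end{itemize}
This settles the base cases $\ell=k$.

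Next, fix $i\le j\le k$ and regard both sides as functions of $\ell\ge k$:
\begin{equation*}
L(\ell)=f_{i,k}f_{j,\ell}-f_{i,\ell}f_{j,k}-f_{i,j}f_{k,\ell}.
\end{equation*}
Each of $f_{j,\ell}$, $f_{i,\ell}$ and $f_{k,\ell}$ satisfies the same linear recursion in $\ell$, namely $x_{\ell+1}=f_{\ell-1,\ell+1}x_\ell-x_{\ell-1}$, valid for $\ell$ at least one more than the respective first index. The other factors $f_{i,k}$, $f_{j,k}$, $f_{i,j}$ are constants in $\ell$. Hence $L$ satisfies this recursion for all $\ell\ge k+1$, so it suffices to check $L(k)=0$ and $L(k+1)=0$.
\begin{itemize}
\item $L(k)=0$ is the case $k=\ell$ above.
\item For $\ell=k+1$, using $f_{k,k+1}=1$ the claim becomes
\begin{equation*}
f_{i,k}f_{j,k+1}-f_{i,k+1}f_{j,k}=f_{i,j}.
\end{equation*}
\end{itemize}

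To prove this generalized unimodular identity, we fix $i\le j$ and induct on $k\ge j$. Write
\begin{equation*}
D(k)=f_{i,k}f_{j,k+1}-f_{i,k+1}f_{j,k}.
\end{equation*}
This is a $2\times 2$ determinant of consecutive terms of two solutions of the same recursion, $\det\begin{pmatrix} f_{i,k} & f_{i,k+1}\\ f_{j,k} & f_{j,k+1}\end{pmatrix}$. Passing from $k$ to $k+1$ multiplies the row vectors on the right by $\mu(f_{k,k+2})$, which has determinant $1$, so $D(k+1)=D(k)$ for $k\ge j$. This step needs the recursion for the $j$-row starting at $n=j+1$, which holds by the remark above. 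At the start, $D(j)=f_{i,j}\cdot 1-f_{i,j+1}\cdot 0=f_{i,j}$. Hence $D(k)=f_{i,j}$ for all $k\ge j$.

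The main point requiring care is that the propagation recursion is valid uniformly down to $n=m+1$ with the conventions $f_{m,m}=0$ and $f_{m,m+1}=1$. Without that, the linear-recursion argument in $\ell$ and the determinant argument in $k$ both break at their boundary indices. Once this is checked, the whole proposition reduces to the constancy of Wronskian-type determinants, and Proposition \ref{prop:unimodular} is recovered as the special case $j=i+1$.
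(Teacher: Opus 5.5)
Your proof is correct, but it takes a genuinely different route from the paper's.

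\emph{The paper's route.} It works with the transfer matrices $M_{r+1,s}=\prod_{t=r+1}^{s}\mu(f_{t-1,t+1})$, whose entries it identifies via (\ref{eq:Mformula}). It reads off the $(2,1)$-entries of the factorisations $M_{i+1,k}=M_{i+1,j}M_{j+1,k}$, $M_{j+1,\ell}=M_{j+1,k}M_{k+1,\ell}$ and $M_{i+1,\ell}=M_{i+1,j}M_{j+1,k}M_{k+1,\ell}$. This gives explicit expansions of $f_{i,k}$, $f_{j,\ell}$ and $f_{i,\ell}$. A direct algebraic elimination then shows that the Ptolemy defect equals $f_{i,j}f_{k,\ell}\bigl(1-(f_{j,k}f_{j+1,k+1}-f_{j,k+1}f_{j+1,k})\bigr)$, which vanishes by Proposition~\ref{prop:unimodular}.

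\emph{Your route.} You use linearity of the recursion in the last index to reduce to the two initial values $\ell=k$ and $\ell=k+1$. You then prove the $\ell=k+1$ case, $f_{i,k}f_{j,k+1}-f_{i,k+1}f_{j,k}=f_{i,j}$, by constancy of a Casoratian determinant, using $\det\mu(c)=1$.

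\emph{What each buys.} Your argument avoids the bulky expansion and is self-contained: it reproves Proposition~\ref{prop:unimodular} as the case $j=i+1$ rather than invoking it. Your observation that the recursion holds down to $n=m+1$ also treats all boundary indices uniformly. By contrast, the paper's last step silently uses the unimodular rule $f_{j,k}f_{j+1,k+1}-f_{j,k+1}f_{j+1,k}=1$ also for $k=j+1$. That case lies outside the stated hypothesis of Proposition~\ref{prop:unimodular}, though it is trivially true because $f_{j+1,j+1}=0$. The paper's route, in exchange, yields explicit product formulas in terms of the matrices $\mu$, which match the reflection-matrix viewpoint used later in that section.
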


\begin{proof}
The equation clearly holds if one of the inequalities 
in $i\le j\le k\le \ell$ is an equality (as $f_{r,r}=0$ for all $r\in \mathbb{Z}$). So we can assume that
$i<j<k<\ell$. For any $r\le s$ we set
$$M_{r+1,s}:= \prod_{t=r+1}^s \mu(f_{t-1,t+1}).
$$
From formula (\ref{eq:propagation}) defining the 
entries of our array of numbers, we can deduce that
\begin{equation} \label{eq:Mformula}
M_{r+1,s} = \begin{pmatrix} 
-f_{r+1,s} & & - f_{r+1,s+1} \\
f_{r,s} & & f_{r,s+1} 
\end{pmatrix}.
\end{equation}
By our assumption $i<j<k<\ell$ we have the following 
matrices and
factorisations: 
$M_{i+1,k} = M_{i+1,j}M_{j+1,k}$, $M_{j+1,\ell} = M_{j+1,k}M_{k+1,\ell}$, 
and $M_{i+1,\ell} = M_{i+1,j} M_{j+1,k}M_{k+1,\ell}$.
Using (\ref{eq:Mformula}) and  
considering the $(2,1)$-entries of these 
matrices we get that 
\begin{eqnarray}
\label{me1} f_{i,k} &=& f_{i,j+1} f_{j,k} - f_{j+1,k} f_{i,j} \\
\label{me2} f_{j,\ell} &=& f_{j,k+1} f_{k,l} - f_{k+1,\ell} f_{j,k} \\
\label{me3} f_{i,\ell} &=& f_{i,j+1} f_{j,k+1} f_{k,\ell} - f_{i,j+1} f_{k+1,\ell} f_{j,k} + f_{j+1,k} f_{k+1,\ell} f_{i,j} - f_{j+1,k+1} f_{i,j} f_{k,\ell}.
\end{eqnarray}
From these equations we can conclude
(use (\ref{me1}) and (\ref{me2}) for the first
equality and (\ref{me3}) for the second equality):
\begin{eqnarray*}
f_{i,j} f_{k,\ell} + f_{j,k} f_{i,\ell} - f_{i,k} f_{j,\ell}
&=& (f_{i,j} f_{k,\ell} + f_{j,k} f_{i,\ell}) - (f_{i,j+1} f_{j,k} - f_{j+1,k} f_{i,j})( f_{j,k+1} f_{k,l} - f_{k+1,\ell} f_{j,k})\\
&=& (f_{i,j} f_{k,\ell} + f_{j,k} f_{i,\ell}) - (f_{i,j+1} f_{j,k} - f_{j+1,k} f_{i,j})( f_{j,k+1} f_{k,l} - f_{k+1,\ell} f_{j,k})  \\
&& +f_{j,k} ( f_{i,j+1} f_{j,k+1} f_{k,\ell} - f_{i,j+1} f_{k+1,\ell} f_{j,k} + f_{j+1,k} f_{k+1,\ell} f_{i,j} \\ 
&& - f_{j+1,k+1} f_{i,j} f_{k,\ell}-  f_{i,\ell}) \\
&=& f_{i,j} f_{k,\ell} \left(1- \left(f_{j,k}f_{j+1,k+1} - f_{j,k+1} f_{j+1,k}\right)\right) = 0
\end{eqnarray*}
where the last equality holds by Proposition 
\ref{prop:unimodular}. 
\end{proof}

Note that the patterns of numbers defined
by Definition \ref{def:halfpattern} are in
general not infinite frieze patterns, as 
the latter require all entries to be 
positive real numbers, see Definition 
\ref{def:infinitefrieze}.

\subsection{Reflection groupoids of rank two and frieze patterns}

The aim of this section is to explain the 
connection between infinite frieze patterns 
of type $\Lambda_p$ and certain Cartan graphs 
of type $\Lambda_p$. 

\begin{Definition}
If $\Cc $ is a Cartan graph and there exists a root system of type $\Cc $, then we say that $\Cc$ \emph{permits} a root system.
\end{Definition}

\begin{Lemma}\label{lemC3}
If $\Cc =\Cc (I,A,(\rfl _i)_{i\in I},(\Cm ^a)_{a\in A})$ is a Cartan graph of type $\Lambda_p$ of rank $2$ permitting a root system, then
\begin{itemize}
  \item [(C3)] if $a,b\in A$ and the identity
  as linear map is in $\Hom (a,b)$, then $a=b$.
\end{itemize}
\end{Lemma}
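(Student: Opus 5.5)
In rank two every morphism in $\Hom(a,b)$ is a composition along a path in the Cartan graph, and since $\rho_1^2=\rho_2^2=\id$, every path from $a$ reduces to an alternating word. So I would take a morphism $w=\id^b\sigma_{i_1}\cdots\sigma_{i_k}\in\Hom(a,b)$ with alternating indices ($i_t\neq i_{t+1}$) that equals the identity linear map, with $k$ minimal among such representatives. The aim is to show that either $k=0$, whence $a=b$, or $k$ is a multiple of $2m$ with $m=m^a_{1,2}$ finite. In the latter case (R4) gives $(\rho_i\rho_j)^{m}(a)=a$, and then also $b=a$.

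The key tool is the standard length function argument for root systems. By (R3), $\sigma_i^a$ maps $R^a$ bijectively onto $R^{\rho_i(a)}$. By (R1) and (R2), it maps $\alpha_i$ to $-\alpha_i$. It also maps every other positive root to a positive root. The reason is that in rank two a positive root $\beta\neq\alpha_i$ has a positive $j$-coordinate, this coordinate is unchanged by $\sigma_i$, and (R1) then forces the image to be positive. Hence along an alternating word one tracks the set of positive roots at $a$ sent to negative roots. If $w=\id$, this set is empty.

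For a reduced alternating word $\sigma_{i_1}\cdots\sigma_{i_k}$ with $k\ge1$, I claim the word sends a positive root to a negative one. Concretely, the roots $\beta_t=\sigma_{i_k}\cdots\sigma_{i_{t+1}}(\alpha_{i_t})$, read in the appropriate object, are positive and pairwise distinct as long as $k\le m$. This is the usual inductive argument. In rank two the positive roots in the cone $\mathbb{R}_{\ge0}\alpha_1+\mathbb{R}_{\ge0}\alpha_2$ are linearly ordered by slope, and alternating reflections walk through them one at a time; since $R^a_+$ here is exactly the roots in that cone, it has $m$ elements. So if $k<2m$, or if $m=\infty$, some positive root becomes negative and $w\neq\id$.

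The main obstacle is handling the case $m<\infty$ and $k\ge m$ cleanly. My plan is to show that after exactly $m$ alternating steps every positive root has been negated, so that $(\sigma_i\sigma_j\cdots)_{m}$ equals $-\id$ up to swapping $\alpha_1,\alpha_2$. Applying (R4), the word of length $2m$ starting at $a$ returns to $a$. One then divides the word $w$ by $(\sigma_1\sigma_2)^{m}$-type blocks, which returns to the same object and keeps the endpoint $b$ unchanged modulo the object reached. This reduces $k$ and contradicts minimality unless $k=0$. Care is needed with the parity and with which index the word starts on; I would try first to check that the length-$m$ words starting with $\sigma_1$ and with $\sigma_2$ coincide as maps $a\to(\rho\cdots)(a)$, so that (R4) applies regardless of the starting letter.
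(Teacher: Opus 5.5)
Your outline is sound, but it is organized differently from the paper's proof. The paper uses no length function and no minimal counterexample. From a single closed alternating word with $\varphi_0=\varphi_\ell=\id$ and $\ell>0$, it looks at the root-free cones $C^k_1,C^k_2$, which are the images under $\varphi_k$ of the second and fourth quadrants. It checks that consecutive cones are adjacent and concludes that they sweep out all of $\mathbb{R}^2$. This gives at once that $R^a$ is finite and that $\ell$ is a multiple of $|R^a|=2m$; the paper writes $\ell=|R^a|$, which tacitly assumes a single winding. Then (R4) yields $a=b$.

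Your route is the standard Coxeter-theoretic argument: simple reflections permute $R_+\setminus\{\alpha_i\}$, inversion sets grow by one along alternating words, the length-$m$ word is $-\id$ up to the swap, and you strip $2m$-blocks using (R4). It takes more steps, but it makes the winding issue explicit and proves the Coxeter relation along the way. The geometric core is the same in both: your ``alternating reflections walk through the roots one at a time'' is the paper's adjacency of cones. You should prove it rather than invoke it. For example, $\sigma_i^a$ fixes the $j$-coordinate and has determinant $-1$. Hence it maps $R^a_+\setminus\{\alpha_i\}$ onto $R^{\rho_i(a)}_+\setminus\{\alpha_i\}$ reversing the slope order, so $\sigma_1^a(\alpha_2)$ is the positive root next to $\alpha_1$.

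Two points you only flag follow from the same tools. First, the alternating word of length $2m$ is the identity and not the swap $\alpha_1\leftrightarrow\alpha_2$. It sends positive roots to positive roots, so it permutes $\{\alpha_1,\alpha_2\}$ by (R2), and it has determinant $+1$. This, not the starting letter, is the real issue: (R4) is stated for both orders $i\neq j$. Second, the case $m<k<2m$ is handled because the number of inverted positive roots at $a$ runs $0,1,\dots,m,m-1,\dots,0$ and then repeats. That observation in fact lets you skip the stripping and the minimality entirely. If $w=\id$, this count vanishes, hence $k\in 2m\mathbb{Z}$, which is exactly the aim stated in your first paragraph.
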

\begin{proof}
Assume $\id\in\Hom(a,b)$. We show that $a=b$ or $|R^a|$ is finite (as we will see, this then also implies $a=b$).
Without loss of generality, let $\id = \id^b \s_i \cdots \s_1 \s_2 \s^a_1$ be a product of $\ell$ reflections starting with a reflection labeled $1$ and ending with label $i=1$ or $i=2$ depending on whether $\ell$ is odd or even (in fact it will turn out that $\ell$ is always even).
We assume $\ell>0$ because otherwise $a=b$ anyway.

Denote $a_0:=a,a_1,\ldots,a_\ell:=b$ the objects on the way from $a$ to $b$,
and let $\varphi_k:=\id^b \s_i \cdots \s_j^{a_k}$ be the product from $a_k$ to $b$ (for the correct $j\in\{1,2\}$). In particular,
$\varphi_0$ and 
$\varphi_{\ell}$ are identity maps.
We claim that
\begin{equation} \label{eq:Rb}
R^b = \{\pm\varphi_k(\alpha_\nu) \mid \nu=1,2,\:\: k=0,\ldots,\ell\}. 
\end{equation}
The inclusion $\supseteq$ follows from (R2) and (R3). 
Let $\beta\in R^b$.
For each $k\in \{0,\ldots,\ell\}$ we have two simplicial cones
$$
C^k_1:=\langle\varphi_k(-\alpha_1),\varphi_k(\alpha_2)\rangle_{\ge 0},
\quad
C^k_2:=\langle\varphi_k(\alpha_1),\varphi_k(-\alpha_2)\rangle_{\ge 0}
$$
(the notation $\langle .,.\rangle_{\ge 0}$
denotes linear combinations with non-negative
coefficients). 

By (R2) and (R3), the boundaries of the
cones $C_1^k$ and $C_2^k$ are roots. 
Moreover, the interior of such a cone cannot 
contain any roots. (In fact, applying the 
sequence $\varphi_k^{-1}$ of reflections 
to a linear combination
in the interior of the cone yields by (R3)
a root having coordinates with different
signs, contradicting (R1).) As a consequence, 
for the roots in $R^b$ we have 
$|C^k_1\cap R^b|=|C^k_2\cap R^b|=2$. 
As another consequence, we also have 
that
$$ (C^k_1\cup C^k_2) \cap (C^{k+1}_1\cup C^{k+1}_2) = \{ \pm \varphi_k(\alpha_\nu)\}$$
for the correct choice of $\nu\in \{1,2\}$.

Note that by definition, $C_1^k=-C_2^k$, i.e.
the cones 
$C_1^k$ and $C_2^k$ are opposite to each other. 
Since $\varphi_k \sigma_{\nu}^{a_k}=
\varphi_{k+1}$ it also follows from the 
definition of the cones that 
one of $\pm \varphi_k(\alpha_{\nu})$
is a common boundary of $C_1^k$ and $C_2^{k+1}$
(and the other is a common boundary of
$C_2^k$ and $C_1^{k+1}$), that is, 
the cones $C_1^k$ and $C_2^{k+1}$ are
adjacent (and also $C_2^k$ and $C_1^{k+1}$
are adjacent).
Thus each object on the way between $a$ and $b$ yields a pair of cones which contain only simple roots; for two adjacent objects, these cones are adjacent.
But $\varphi_0=\varphi_\ell=\id$, thus we obtain (note that $\ell>0$ and that $C_1^0$ 
and $C_2^0$ cover the second and fourth 
quadrant)
\begin{equation}\label{eq:CR2}
\bigcup_{k=0}^\ell C_1^k\cup C_2^k = \mathbb{R}^2.
\end{equation}
As a consequence, $\beta$ is a multiple of $\varphi_k(\alpha_\nu)$ for some $\nu,k$; by (R2) 
and (R3) we get $\beta=\pm\varphi_k(\alpha_\nu)$, thus proving
the remaining inclusion in the above claim
(\ref{eq:Rb}).

Equation (\ref{eq:CR2}) shows that the whole
space $\mathbb{R}^2$ is a union of finitely
many cones and the roots appear on the 
walls of these cones. Counting the number 
of cones we get that 
$|R^b|=|R^a|$ is finite and 
$$\ell=|R^a|=2|R^a \cap (\mathbb{R}_{\ge 0} \alpha_i + \mathbb{R}_{\ge 0} \alpha_j)|$$ 
so by (R4), $a=b$.
\end{proof}

\begin{Remark}
For $p=3$, Lemma \ref{lemC3} also follows from \cite[Theorem 1]{p-HY08}.
The statement in Lemma \ref{lemC3} is also true for arbitrary Cartan graphs of arbitrary rank, but rank $2$ is sufficient for our purposes.
\end{Remark}

As a graph, a connected Cartan graph of rank two is a connected 2-regular graph.
Thus it
can only be one of the following:
an infinite string, an infinite string with one end (with a loop attached), 
a cycle, or a chain (a finite string with two ends with loops attached, as in 
Example \ref{ex:13122}). In each case, we obtain a sequence of Cartan entries labeled by $\mathbb{Z}$:

\begin{Definition} \label{def:quiddity}
Let $\Cc=\Cc (I=\{1,2\},A,(\rfl _i)_{i\in I},(C^a)_{a\in A})$ be a connected Cartan graph of type $\Lambda_p$ of rank two and $a_0\in A$ an object.
For $i>0$ let
$$ a_i := \begin{cases}
\rfl_1 (a_{i-1}) & i \text{ odd},\\
\rfl_2 (a_{i-1}) & i \text{ even},
\end{cases}
\quad
a_{-i} := \begin{cases}
\rfl_2 (a_{1-i}) & i \text{ odd},\\
\rfl_1 (a_{1-i}) & i \text{ even}.
\end{cases}
$$
Note that the $a_i$ are not necessarily distinct.
Define
$$
\qs_i :=
\begin{cases}
-\cm^{a_i}_{12} & i \text{ odd}, \\
-\cm^{a_i}_{21} & i \text{ even}.
\end{cases}
$$
We call $(q_i)_{i\in\mathbb{Z}}$ the \emph{quiddity sequence} of $\Cc$ at $a_0$.
\end{Definition}

\begin{Example}
For the Cartan graph from Example \ref{ex:13122}, we obtain the quiddity sequence
$$
\ldots,1,3,1,2,2,1,3,1,2,2,1,3,1,2,2,\ldots
$$
The Cartan graph of type $\Lambda_p$
from Example \ref{ex:2002} yields the 
quiddity sequence
$$\ldots,0,0,0,0,0,0,0,\ldots.
$$
The Cartan graph of type $\Lambda_4$
from Example \ref{ex:root2} yields the 
quiddity sequence
$$\ldots,\sqrt{2},2\sqrt{2},\sqrt{2},
2\sqrt{2},\sqrt{2},2\sqrt{2},\ldots.
$$
The Cartan graph from Example \ref{affine41} gives
$$
\ldots,1,4,1,4,1,4,\ldots
$$
The Cartan graph of type $\Lambda_p$ 
from Example \ref{dihedralgroup} yields 
the quiddity sequence
$$\ldots, \lambda_p,\lambda_p,
\lambda_p,\lambda_p,\lambda_p,\lambda_p,
\ldots
$$
\end{Example}

\begin{Remark} \label{rem:a0a1}
Starting with an object $a_0\in A$ (in blue below), the sequence 
$(a_i)_{i\in \mathbb{Z}}$ is by Definition
\ref{def:quiddity}
given by
\begin{equation} \label{eq:a0}
(\ldots, \rho_2\rho_1\rho_2(a_0),\rho_1\rho_2(a_0),\rho_2(a_0),{\color{blue} a_0},\rho_1(a_0), \rho_2\rho_1(a_0), 
\rho_1\rho_2\rho_1(a_0),\ldots).
\end{equation}
The corresponding quiddity sequence 
$(q_i)_{i\in \mathbb{Z}}$ at $a_0$ is then 
given by 
\begin{equation} \label{eq:quidditya0}
    (\ldots, -c_{12}^{\rho_2\rho_1\rho_2(a_0)},-c_{21}^{\rho_1\rho_2(a_0)},-c_{12}^{\rho_2(a_0)},
    {\color{blue} -c_{21}^{a_0}}, -c_{12}^{\rho_1(a_0)}, -c_{21}^{\rho_2\rho_1(a_0)}, 
-c_{12}^{\rho_1\rho_2\rho_1(a_0)},\ldots).
\end{equation}
If we instead start with the object $\rho_1(a_0)$
(again in blue), then we get the sequence
$$(\ldots,\rho_1(a_2)=a_3,\rho_2\rho_1(a_0)=a_2,{\color{blue} \rho_1(a_0)=a_1},\rho_1^2(a_0)=a_0, \rho_2(a_0)=a_{-1},\ldots)
$$
which is obtained from the sequence (\ref{eq:a0})
by reversing and shifting.
The corresponding quiddity sequence has the form
$$(\ldots,-c_{21}^{a_3},-c_{12}^{a_2},
{\color{blue} -c_{21}^{a_1}},-c_{12}^{a_0}, 
-c_{21}^{a_{-1}},-c_{12}^{a_{-2}}\ldots). 
$$
If we compare this to the quiddity sequence
(\ref{eq:quidditya0}), this looks completely 
different (for instance, here $-c_{12}^{a_0}$
appears instead of $-c_{21}^{a_0}$ above). 
However, by the definition of a Cartan graph (see
Definition \ref{def:cartangraph}\,(C2)) this sequence 
is equal to
$$(\ldots,-c_{21}^{a_4},-c_{12}^{a_3},
{\color{blue} -c_{21}^{a_2}},-c_{12}^{a_1}, 
-c_{21}^{a_{0}},-c_{12}^{a_{-1}}\ldots). 
$$
And now we see that this sequence can be obtained
from (\ref{eq:quidditya0}) by reversing and shifting.
\end{Remark}

\begin{Definition} \label{def:infinitedihedral}
We write $\Dih_\infty$ 
for the infinite dihedral group acting on $\mathbb{Z}$ with generators $a\mapsto -a$, $a \mapsto a+1$. We say that two sequences
$(q_i)_{i\in\mathbb{Z}}$, $(q'_i)_{i\in\mathbb{Z}}$ are \emph{equivalent} if there exists
$\sigma\in\Dih_\infty$ such that $q_{\sigma(i)} = q'_{i}$ for all $i\in\mathbb{Z}$.
\end{Definition}

\begin{Proposition}\label{wg:quideq}
There is a 1-1 correspondence between connected simply connected Cartan graphs of type $\Lambda_p$ of rank two with infinitely many 
vertices up to equivalence and sequences $(q_i)_{i\in\mathbb{Z}}$ with
$q_i\in \mathbb{Z}_{\ge 0} \lambda_p$
up to equivalence.
\end{Proposition}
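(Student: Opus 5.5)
The plan is to write down both maps explicitly and check that they are mutually inverse on equivalence classes. From a Cartan graph to a sequence, take the quiddity sequence of Definition \ref{def:quiddity} at some object $a_0$. By (M1) every $q_i$ lies in $\mathbb{Z}_{\ge 0}\lambda_p$.

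\textbf{The map is well defined.} Changing the base point from $a_0$ to $a_1=\rho_1(a_0)$ reverses and shifts the sequence, by Remark \ref{rem:a0a1}. Similarly, changing to $a_{-1}=\rho_2(a_0)$ reverses and shifts. Hence any change of base point moves the sequence within its $\Dih_\infty$-class. An equivalence of Cartan graphs (Definition \ref{def:Cartanequiv}) with $\varphi_0=\id$ simply transports the base point. If $\varphi_0$ swaps $1$ and $2$, it interchanges $c_{12}$ with $c_{21}$ and $\rho_1$ with $\rho_2$. That amounts to reading the sequence in the opposite direction, which is again a reflection in $\Dih_\infty$.

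\textbf{The structure of the graph.} This is where connectedness, simple connectedness and infiniteness enter. A connected 2-regular graph is one of: an infinite string, a one-ended string with a loop, a cycle, or a chain. Infinitely many vertices excludes the cycle and the chain. A loop at $a$ means $\rho_i(a)=a$, so $\sigma_i^a\in\Hom(a,a)$. But $\sigma_i^a(\alpha_i)=-\alpha_i$, so $\sigma_i^a\neq\id$, and simple connectedness is violated. Hence the graph is a two-sided infinite string. The objects $a_i$, $i\in\mathbb{Z}$, are then pairwise distinct and exhaust $A$, and the edges alternate in label $1$ and $2$.

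Note the paper's convention: $\sigma_i^a$ is defined as a linear map on $\Ri^I$, so a loop does not give the identity. Conversely, on an infinite string every closed path reduces to a word containing consecutive $\sigma_i\sigma_i$. Since $\sigma_i^{\rho_i(a)}\sigma_i^a=\id$ by (C2) (both maps use the same $c_{ij}$), simple connectedness holds automatically, exactly as in the examples.

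\textbf{The inverse map.} Given $(q_i)$ with $q_i\in\mathbb{Z}_{\ge0}\lambda_p$, set $A=\mathbb{Z}$. Let $\rho_1$ swap $2k\leftrightarrow 2k+1$ and $\rho_2$ swap $2k-1\leftrightarrow 2k$, so that $\rho_1(a_{i-1})=a_i$ for odd $i$, and so on, matching Definition \ref{def:quiddity}. The entries are forced by (C2) together with the definition of $q$:
\begin{itemize}
\item $c_{12}^{a_i}=-q_i$ for $i$ odd and $c_{12}^{a_i}=-q_{i-1}$ for $i$ even, since the $\rho_1$-edge pairs $a_{i-1}$ with $a_i$ for $i$ odd;
\item symmetrically, $c_{21}^{a_i}=-q_i$ for $i$ even and $c_{21}^{a_i}=-q_{i+1}$ for $i$ odd;
\item all diagonal entries equal $2$.
\end{itemize}
One checks (C1) and (C2) directly. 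For (M2), note that $c_{12}^{a}$ and $c_{21}^{a}$ are generally different $q$'s, and either may be $0$ while the other is not.

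\textbf{The main obstacle.} I expect the main work to lie in the bookkeeping and in condition (M2). If (M2) genuinely fails for sequences with isolated zeros, I would reexamine whether the intended statement implicitly requires it. The fix I would try first is to interpret the correspondence via the quiddity labels, which are what (C2) constrains, and to verify consistency with the examples (e.g. the all-zero sequence of Example \ref{ex:2002}).

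\textbf{Mutual inverses.} The construction returns $(q_i)$ at base point $0$. Conversely, the uniqueness of the string structure, with the entries forced as above, shows that any graph of this kind is equivalent to the one rebuilt from its quiddity sequence. Finally, sequences equivalent under $\Dih_\infty$ give equivalent graphs. A shift by $2$ is a relabelling of $A$. A shift by $1$ or a reflection corresponds to changing base point to $a_{\pm1}$, or to swapping the labels $1$ and $2$ via $\varphi_0$, by Remark \ref{rem:a0a1}.
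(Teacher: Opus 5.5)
Your plan is the same as the paper's. In one direction you take the quiddity sequence. In the other you build the string $A=\mathbb{Z}$, with $\rho_1$ swapping $2k\leftrightarrow 2k+1$ and $\rho_2$ swapping $2k-1\leftrightarrow 2k$. Remark \ref{rem:a0a1} handles independence of the base point. Two of your arguments fill in steps the paper only asserts, and both are correct: a loop is excluded because $\sigma_i^a\in\Hom(a,a)$ sends $\alpha_i$ to $-\alpha_i$, and the rebuilt graph is simply connected because (C2) gives $\sigma_i^{\rho_i(a)}=\sigma_i^a$, whose square is $\id$.

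There is an index slip in your inverse construction. The $\rho_1$-edges are $\{a_{2k},a_{2k+1}\}$, so for $i$ even the $\rho_1$-partner of $a_i$ is $a_{i+1}$, not $a_{i-1}$. Hence (C2) forces $c_{12}^{a_i}=-q_{i+1}$. With your $-q_{i-1}$, (C2) fails on the edge $\{a_{2k},a_{2k+1}\}$ unless $q_{2k-1}=q_{2k+1}$. After this correction your matrices are exactly the paper's $C^a$.

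Your worry about (M2) is justified, and it is a defect of the statement itself, not something to reinterpret away. For every $i\in\mathbb{Z}$, (C2) shows that the two off-diagonal entries of $C^{a_i}$ are $-q_i$ and $-q_{i+1}$. So (M2) forces $q_i=0\Leftrightarrow q_{i+1}=0$. Hence the quiddity sequence of any Cartan graph of type $\Lambda_p$ is either identically zero or has all entries in $\mathbb{Z}_{>0}\lambda_p$.

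Consequently a sequence such as $(\ldots,0,\lambda_p,0,\lambda_p,\ldots)$ is not in the image of the quiddity map. The construction applied to it yields $\begin{pmatrix}2&0\\-\lambda_p&2\end{pmatrix}$, which is not a generalized Cartan matrix of type $\Lambda_p$. The paper's proof checks only (C1) and (C2) and never verifies (M2), so it has the same hole.

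The right fix is to restrict the sequences to these two kinds, or to drop (M2) from Definition \ref{def:genCartan}. With either change your argument goes through as written, with no appeal to quiddity labels needed. Theorem \ref{thm:cartanfriezep} is unaffected, since there all $q_i$ are positive.
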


\begin{proof}
Let $\Cc$ be a connected simply connected Cartan graph of type $\Lambda_p$ 
of rank two with infinitely many objects. Then its quiddity sequence $(q_i)_{i\in\mathbb{Z}}$ at any object $a_0$ satisfies $q_i\in 
\mathbb{Z}_{\ge 0}\lambda_p$ for all $i\in\mathbb{Z}$ by Definitions \ref{def:quiddity}
and \ref{def:genCartan}.
As a graph, $\Cc$ is an infinite string because it is simply connected.
Thus since $\Cc$ is connected, choosing another object than $a_0$ will give an equivalent sequence
(see Remark \ref{rem:a0a1}). 
Moreover, an equivalent Cartan graph will also produce an equivalent sequence: bijections $\varphi_0,\varphi_1$ as in the definition of equivalence of Cartan graphs (cf. Definition \ref{def:Cartanequiv})
would only shift or reverse the quiddity sequence. 
(The details are similar to Remark \ref{rem:a0a1}
and are left to the reader.) Thus, we have a 
well-defined map from equivalence classes of
connected simply connected Cartan graphs of
type $\Lambda_p$ of rank two to equivalence 
classes of sequences with entries in $Z_p$. 

For the converse, given a sequence $(q_i)_{i\in\mathbb{Z}}$ with 
$q_i\in \mathbb{Z}_{\ge 0}\lambda_p$, we construct a Cartan graph of rank two
$\Cc=\Cc (I=\{1,2\},A=\mathbb{Z},(\rfl _i)_{i\in I},(C^a)_{a\in A})$ where we set
$$
\rho_1(i) := \begin{cases}
i+1 & i \text{ even}, \\
i-1 & i \text{ odd},
\end{cases}
\quad
\rho_2(i) := \begin{cases}
i-1 & i \text{ even}, \\
i+1 & i \text{ odd}
\end{cases}
$$
and define the generalized Cartan matrices as
$$
C^a = \left\{ \begin{array}{ll}
\begin{pmatrix} 2 & -q_a \\ -q_{a+1} & 2 
\end{pmatrix} & \textrm{if $a$ odd} \\
\begin{pmatrix} 2 & -q_{a+1} \\ -q_{a} & 2 
\end{pmatrix} & \textrm{if $a$ even} 
\end{array}
\right.
$$
It is straightforward to check that conditions
(C1) and (C2) of Definition \ref{def:cartangraph}
are satisfied.
This Cartan graph is clearly connected, and has $(q_i)_{i\in\mathbb{Z}}$ as quiddity sequence
(cf. Definition \ref{def:quiddity}).
From the definition of the involutions $\rho_i$
it follows that $\Cc$ is simply connected
(cf. Definition \ref{def:simplyconnected}).
We also have to show that if we start with 
equivalent sequences $(q_i)_{i\in \mathbb{Z}}$
and $(q'_i)_{i\in \mathbb{Z}}$, that the 
corresponding Cartan graphs are equivalent (in the
sense of Definition \ref{def:Cartanequiv}).
For this it suffices to consider equivalent 
sequences resulting from the generators 
of the infinite dihedral group as in Definition 
\ref{def:infinitedihedral}. For the shift 
$q_i'=q_{i+1}$, 
an equivalence of the corresponding
Cartan graphs is given by the maps $\varphi_0$
interchanging 1 and 2, and $\varphi_1:a\mapsto 
a-1$. For the reversion $q_i'=q_{-i}$,
an equivalence of the corresponding
Cartan graphs is given by the maps $\varphi_0=
\mathrm{id}$, and $\varphi_1:a\mapsto -(a+1)$.
We leave the details of the verification to
the reader. Thus, we now have a well-defined
map from equivalence classes of sequences 
over $Z_p$ to equivalence classes of 
connected simply connected Cartan graphs of
type $\Lambda_p$ of rank two. 
\smallskip

Finally, from the above constructions it is 
straightforward to check that the two maps 
are inverse to each other (actually, they are
just inverses, not only up to equivalence). 
This completes the proof of the bijection
stated in the proposition. 
\end{proof}
\medskip

\begin{Remark} \label{rem:rootsinfrieze}
Let $\mathcal{C}$ be a connected 
Cartan graph of type
$\Lambda_p$ of rank 2 and let $a_0\in A$. 
According to Definition \ref{def:quiddity}
the Cartan graph around $a_0$ can be visualized
as 
\begin{equation} \label{fig:Cartan}
\ldots C^{a_{-2}} \stackrel{\rho_1}{\longleftrightarrow}
C^{a_{-1}} \stackrel{\rho_2}{\longleftrightarrow}
C^{a_0} \stackrel{\rho_1}{\longleftrightarrow}
C^{a_1} 
\stackrel{\rho_2}{\longleftrightarrow}
C^{a_2} \ldots
\end{equation}
(where the $a_i$ are not necessarily distinct). 
Moreover, also by Definition \ref{def:quiddity}
we have the quiddity cycle 
$(q_i)_{i\in \mathbb{Z}}$ with 
$$q_i= \left\{ \begin{array}{ll}
-c_{12}^{a_i} & i~\mathrm{odd}, \\
-c_{21}^{a_i} & i~\mathrm{even}.
\end{array} \right.
$$
Up to translation, 
we can set $q_i=f_{i,i+2}$ for $i\in \mathbb{Z}$. 
By Definition \ref{def:halfpattern}
and Proposition \ref{prop:unimodular}
this quiddity cycle defines an array of numbers
$(f_{i,j})$ such that each diamond satisfies 
the unimodular rule. 
On the other hand, we
have by Definition \ref{def:realroots}
the set of real roots at the object $a_0$ given 
by 
\[ \rer {a_0}=\{ \id ^{a_0} \s _{i_1}\cdots \s_{i_k}(\alpha_j) \mid
k\in \mathbb{N} _0,\:\: i_1,\dots,i_k,j\in I\}\subseteq \Ri^I.\]
We want to explain that the (positive) 
real roots appear in the array of numbers
and where precisely they are located.
Note that the real roots at $a_0$ 
are obtained by 
applying to the unit vectors 
all products of reflections 
corresponding to paths in (\ref{fig:Cartan})
ending at $a_0$. 

$$\begin{array}{ccccccccccccc}
&& & & \iddots & & \iddots & & \iddots & & \iddots & & \\
&& & {\color{red} f_{-3,2}}& & f_{-2,3} & & f_{-1,4} & & f_{0,5} & & {\color{green} f_{1,6}} &  \\
 \cdots & & {\color{red}f_{-3,1}} &  & 
 {\color{red} f_{-2,2}} & & f_{-1,3} & & f_{0,4} & & {\color{green} f_{1,5}} & &  \cdots \\
&& & {\color{red} f_{-2,1}} & & {\color{red} f_{-1,2}} & & f_{0,3} & & 
{\color{green} f_{1,4}} & & {\color{green} f_{2,5}} &  \\
 \cdots & & q_{-2} & & {\color{red} q_{-1}} & & {\color{red} q_0} & & 
 {\color{green} q_1} & & {\color{green} q_2} & & \cdots \\
&&   & 1 & & {\color{red} 1} & & {\color{olive} 1} & & 
{\color{green} 1} & & 1 & \\
\cdots && 0 &  & 0 &  & {\color{red} 0} &  & {\color{green} 0} &  & 0 &  & 0  \\
 \end{array}
$$
The vectors $\begin{pmatrix} 1\\0 \end{pmatrix}$,
$\begin{pmatrix} q_1\\1 \end{pmatrix}$,
$\begin{pmatrix} f_{1,4}\\q_2 \end{pmatrix}$,
$\begin{pmatrix} f_{1,5}\\f_{2,5}\end{pmatrix}$,
$\ldots$ highlighted in green are the
real roots of the 
form $\mathrm{id}\begin{pmatrix} 1\\0
\end{pmatrix}$, $\sigma_1^{a_1}\begin{pmatrix} 0\\1
\end{pmatrix}$, $\sigma_1^{a_1}\sigma_2^{a_2} \begin{pmatrix} 1\\0\end{pmatrix}$,
$\sigma_1^{a_1}\sigma_2^{a_2}\sigma_1^{a_3} \begin{pmatrix} 0\\1\end{pmatrix}$, $\ldots$
(corresponding to paths in \ref{fig:Cartan}
coming from the right). 

The vectors $\begin{pmatrix} 0\\1 \end{pmatrix}$,
$\begin{pmatrix} 1\\q_0 \end{pmatrix}$,
$\begin{pmatrix} q_{-1}\\f_{-1,2} \end{pmatrix}$,
$\begin{pmatrix} f_{-2,1}\\f_{-2,2}\end{pmatrix}$,
$\ldots$ highlighted in red are the real roots of the 
form $\mathrm{id}\begin{pmatrix} 0\\1
\end{pmatrix}$, $\sigma_2^{a_{-1}}\begin{pmatrix} 1\\0
\end{pmatrix}$, $\sigma_2^{a_{-1}}\sigma_1^{a_{-2}} \begin{pmatrix} 0\\1\end{pmatrix}$,
$\sigma_2^{a_{-1}}\sigma_1^{a_{-2}}\sigma_2^{a_{-3}} \begin{pmatrix} 1\\0\end{pmatrix}$, $\ldots$
(corresponding to paths in \ref{fig:Cartan}
coming from the left). 

However, these are not all real roots at
the object $a_0$, the remaining ones are 
the negative vectors of the vectors above. 
All real roots can conveniently be 
visualized if one extends the above array
of numbers with a copy with minus signs 
by flipping the array at the line with zeros: 
$$\begin{array}{cccccccccccc}
& & & \iddots & & \iddots & & \iddots & & \iddots & & \\
& & {\color{red} f_{-3,2}}& & f_{-2,3} & & f_{-1,4} & & f_{0,5} & & {\color{green} f_{1,6}} &  \\
 \cdots  & {\color{red}f_{-3,1}} &  & 
 {\color{red} f_{-2,2}} & & f_{-1,3} & & f_{0,4} & & {\color{green} f_{1,5}} & &  \cdots \\
& & {\color{red} f_{-2,1}} & & {\color{red} f_{-1,2}} & & f_{0,3} & & 
{\color{green} f_{1,4}} & & {\color{green} f_{2,5}} &  \\
 \cdots  & q_{-2} & & {\color{red} q_{-1}} & & {\color{red} q_0} & & 
 {\color{green} q_1} & & {\color{green} q_2} & & \cdots \\
&   & 1 & & {\color{red} 1} & & {\color{olive} 1} & & 
{\color{green} 1} & & 1 & \\
\cdots & 0 &  & 0 &  & {\color{red} 0} &  & {\color{green} 0} &  & 0 &  & \cdots  \\
&  & -1 & & {\color{red} -1} & & {\color{olive} -1} & & 
{\color{green} -1} & & -1 & \\
 \cdots  & -q_{-2} & & {\color{red} -q_{-1}} & & {\color{red} -q_0} & & 
 {\color{green} -q_1} & & {\color{green} -q_2} & & \cdots \\
& & {\color{red} -f_{-2,1}} & & 
{\color{red} -f_{-1,2}} & & -f_{0,3} & & 
{\color{green} -f_{1,4}} & & 
{\color{green} -f_{2,5}} &  \\
 \cdots  & {\color{red} -f_{-3,1}} &  & 
 {\color{red} -f_{-2,2}} & & -f_{-1,3} & & -f_{0,4} & & {\color{green} -f_{1,5}} & &  \cdots \\
& & {\color{red} -f_{-3,2}}& & -f_{-2,3} & & 
-f_{-1,4} & & -f_{0,5} & & 
{\color{green} -f_{1,6}} &  \\
& \iddots & & \iddots & & \iddots & & \iddots & & & & \\
 \end{array}
$$
Then all real roots at the object $a_0$ 
are given by the vectors in the coloured
area (with entries ordered suitably, as
described above). 

Note that up to signs, all real roots
occur as columns in the infinite matrix
given by the two coloured diagonals 
from bottom left to top right. 
\end{Remark}

Note that for an arbitrary reflection groupoid, the real roots are not necessarily a root system.
We now clarify how this relates to infinite frieze patterns.

\begin{Theorem} \label{thm:cartanfriezep}
There is a 1-1 correspondence between connected simply connected Cartan graphs of type $\Lambda_p$ of rank two with infinitely many 
vertices permitting a root system
up to equivalence and (tame) infinite frieze patterns of type $\Lambda_p$ up to equivalence.
\end{Theorem}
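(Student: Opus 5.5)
By Proposition \ref{wg:quideq}, connected simply connected Cartan graphs of type $\Lambda_p$ of rank two with infinitely many vertices correspond, up to equivalence, to sequences $(q_i)_{i\in\mathbb{Z}}$ with $q_i\in\mathbb{Z}_{\ge 0}\lambda_p$, up to the action of $\Dih_\infty$. An infinite frieze pattern of type $\Lambda_p$ is determined by its quiddity row. The propagation formula (\ref{eq:propagation}) defines the array from any sequence, and reversing or shifting the quiddity row reflects or translates the whole array. We therefore take equivalence of infinite frieze patterns to mean exactly this $\Dih_\infty$-action. The theorem then reduces to the following claim: for a sequence $(q_i)$ in $\mathbb{Z}_{\ge 0}\lambda_p$, the associated Cartan graph $\Cc$ permits a root system if and only if all entries $f_{i,j}$ with $j\ge i+1$ of the array from Definition \ref{def:halfpattern} built on $f_{i,i+2}=q_i$ are positive. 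In the positive case the array is, by Proposition \ref{prop:unimodular}, an infinite frieze pattern of type $\Lambda_p$, and it is automatically tame because it has no zero entries.

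\emph{Positive entries $\Rightarrow$ root system.} Assume all $f_{i,j}>0$. Define $R^a:=\rer a$, the set of real roots. By Remark \ref{rem:rootsinfrieze}, the real roots at $a_0$ are the vectors $\pm(f_{1,j},f_{2,j})^T$ and $\pm(f_{i,0},f_{i,1})^T$, suitably indexed and including the unit vectors. This uses the identity $M_{r+1,s}$ from (\ref{eq:Mformula}) to identify products of reflections with frieze entries; I would verify that identification first. Positivity of the entries gives (R1). (R3) holds by construction of real roots. For (R2), note that no other root is a multiple of $\alpha_1$ or $\alpha_2$: such a root would need a zero coordinate, i.e.\ some $f_{i,j}=0$ with $j>i$, which is excluded. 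Next I compute $m^a_{1,2}$. Positivity forces all real roots to be positive or negative, and the roots coming from the right and from the left are pairwise distinct. This follows from unimodularity: consecutive columns have determinant $1$, so the slopes are strictly monotone along each diagonal, and the two families lie on opposite sides. Hence $m^a_{1,2}=\infty$ and (R4) is vacuous. The same argument applies at every object, since every object plays the role of $a_0$ after a shift.

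\emph{Root system $\Rightarrow$ positive entries.} Suppose $\Cc$ permits a root system $(R^a)$. By (R3) and (R2), $\rer a\subseteq R^a$, so by (R1) every real root has coordinates of a single sign. The roots $(f_{1,j},f_{2,j})$ are obtained from $\alpha_1$ by reflections. Suppose some entry is non-positive and take the first such entry along a diagonal. The recursion $f_{i,j+1}=q_{j-1}f_{i,j}-f_{i,j-1}$ together with the sign condition on each root vector should then force a zero entry $f_{i,j}=0$ with $j\ge i+2$. I expect this step to be the main obstacle. A zero entry makes the root a multiple of a unit vector, which gives a closed path in the sense of Lemma \ref{lemC3}: the composed reflections map the simple roots back to $\pm$ the simple roots. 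I would try to show that this composition is the identity, or close it up by one more reflection, so that (C3) yields $a=b$ for two distinct objects of the infinite string, a contradiction. Alternatively, the covering argument in the proof of Lemma \ref{lemC3} shows that $R^a$ is finite and $m^a_{1,2}$ is finite, so (R4) would force $(\rho_1\rho_2)^m(a)=a$, which is impossible on an infinite string. I would carefully handle the sign bookkeeping at the first non-positive entry, including the case of negative entries, where one must exclude a sign change of coordinates without passing through zero; this uses (R1) applied to the root vector itself.

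Finally, I would check that the two constructions are compatible with the equivalences on both sides, which is immediate from Proposition \ref{wg:quideq} since frieze equivalence is the $\Dih_\infty$-action on quiddity rows.
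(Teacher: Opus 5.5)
The forward direction is fine and is essentially the paper's. You leave the claim that the left and right families lie on opposite sides unproved, but you do not need it. With positive entries, $\det(g_k,g_{k+1})=1$ for $g_k=(f_{i,k},f_{i+1,k})$ already makes the slopes within one family strictly increasing, so $m^a_{1,2}=\infty$. The paper instead notes that the determinant of any two such columns is a frieze entry, by Proposition~\ref{prop:infinitePtolemy}.

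The converse has a genuine gap, and it is not at the step you flag. Getting from a non-positive entry to a zero is routine. The pairs $(f_{i,k},f_{i+1,k})$ and $(f_{k,j},f_{k,j+1})$ are roots, so by (R1) neighbours weakly share a sign, and a negative entry is cut off from the row of $1$'s by a zero.

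The problem is what a zero $f_{i,j}=0$ with $j\ge i+2$ actually gives you. The morphism along the path of length $j-i$ has columns $\pm(f_{i,j},f_{i+1,j})=\pm(0,\mp1)$ and $\pm(f_{i,j+1},f_{i+1,j+1})=\pm(\pm1,x)$, with $x$ unknown. So it sends only one simple root to $\pm$ a simple root. Your assertion that the composed reflections send the simple roots to $\pm$ the simple roots is therefore unsupported. Neither (C3) nor the covering argument of Lemma~\ref{lemC3} (which uses $\varphi_0=\varphi_\ell=\id$) applies as stated, and appending one more reflection does not help.

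The missing idea, which is the core of the paper's proof, is to look at the block $\begin{pmatrix} f_{i,j}&f_{i,j+1}\\ f_{i+1,j}&f_{i+1,j+1}\end{pmatrix}$. Unimodularity and (R2) make it $\begin{pmatrix}0&\pm1\\ \mp1&x\end{pmatrix}$. Its second column and second row are roots at two different objects, and their first coordinates have opposite signs, so (R1) forces $x=0$. By (\ref{eq:Mformula}) you then get $M_{i+1,j}=\pm I$. So a nontrivial path gives a morphism between distinct objects that is the identity up to sign, and up to swapping $\alpha_1,\alpha_2$ when $j-i$ is odd.

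The paper concludes from here with (C3). In all four sign/swap cases, the gallery of chambers along this path sweeps at least a half-turn. Hence the covering argument of Lemma~\ref{lemC3} gives $|R^a|<\infty$, and (R4) contradicts the infinite string exactly as in your last sentence.

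Alternatively, you can make your covering idea work directly. Since $\det(g_k,g_{k+1})=1$, the roots $g_k$ rotate counterclockwise, starting at $\alpha_1$, through cones $\langle g_k,g_{k+1}\rangle_{\ge0}$. These cones are images of the second or fourth quadrant, so by (R1) and (R3) they contain no roots in their interiors. A zero $f_{i,k}=0$ with $k\ge i+2$ means the $g_k$ have already swept at least a quarter-turn. Together with the fourth quadrant and $R^a=-R^a$, this covers $\mathbb{R}^2$. Either way, some such argument has to be supplied; it is not in your proposal.
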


\begin{proof}
Assume that we have an infinite frieze pattern 
$\mathcal{F}$ of type $\Lambda_p$.
The quiddity sequence of $\mathcal{F}$
corresponds by 
Proposition \ref{wg:quideq}
to a connected simply connected Cartan graph $\Cc$.
By Remark \ref{rem:rootsinfrieze}, the real 
roots at an object are given by the columns of 
the infinite matrices given by two 
consecutive diagonals of $\mathcal{F}$
(up to signs, to be precise
the two diagonals give half
of the real roots). We have to show that 
the real roots form a root system, i.e. 
that they satisfy the axioms (R1)-(R4)
of Definition \ref{def:rootsystem}.
\smallskip

All entries in the infinite frieze pattern
$\mathcal{F}$ are positive (by 
Definition \ref{def:infinitefrieze}), so (R1) is satisfied (cf. Remark {\ref{rem:rootsinfrieze}).
Moreover, we have $0$'s only on the bottom line
of $\mathcal{F}$, thus also (R2) is satisfied.

From the description of real roots in
Remark \ref{rem:rootsinfrieze} it is 
apparent that the reflections $\sigma_i$ 
map real roots to real roots, so that
axiom (R3) is satisfied. 

It remains to consider (R4). We consider
some $m_{1,2}^a$, that is, the number of 
real roots at the object $a$ with 
non-negative coordinates. 
By Remark \ref{rem:rootsinfrieze}, 
these real roots appear as columns in 
infinite matrices given by two consecutive 
diagonals. The 
determinant of the matrix consisting of 
any of these two columns is an entry in the
infinite frieze pattern $\mathcal{F}$
(this is true by Proposition 
\ref{prop:infinitePtolemy}, it is a Ptolemy relation
where two of the indices are consecutive),
hence is non-zero. In particular, 
the different columns of the infinite 
matrices must be different. Therefore,
$m_{1,2}^a=\infty$ for
all $a$ and (R4) holds 
trivially. 
\smallskip

For the converse, let $\Cc$ be a Cartan graph with all the assumptions given in the theorem.
We consider its quiddity sequence 
as in Definition \ref{def:quiddity}. 
The quiddity sequence defines an infinite 
array $\mathcal{F}$ of numbers by Definition 
\ref{def:halfpattern}. For this to become
an infinite frieze pattern, all entries
must be positive. 
Assume that $\mathcal{F}_{i,j}=0$ for some $i<j$. Since adjacent $2\times 2$-determinants are equal to $1$, the $2\times 2$-submatrix with upward diagonals labeled $i,i+1$ and 
downward diagonals labeled $j,j+1$ 
is of the form
$$
\begin{matrix}
0 & \pm 1 \\
\mp 1 & x
\end{matrix}
$$
for some $x$. The second row and the second column of this matrix are roots of the root system of $\Cc$ (at certain objects). By (R1) this implies $x=0$.
But then propagation along rows starting at $(i,i)$ gives a morphism between two different objects which is $\id$ as a linear map. 
With (C3) (Lemma \ref{lemC3}) this is impossible since $\Cc$ has infinitely many objects by assumption.

Since any two neighboring entries in $\mathcal{F}$ are roots, applying (R1) again yields that all entries $\mathcal{F}_{i,j}$, $i<j$ are positive.
}
\end{proof}

\begin{Corollary}
For fixed $p$, the set of Cartan graphs of type $\Lambda_p$ of rank two with infinitely many 
vertices permitting a root system is parametrized by locally finite $p$-angulations of infinite strips.
\end{Corollary}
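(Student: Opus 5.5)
The corollary is obtained by composing the two bijections already established: the one of Theorem \ref{thm:cartanfriezep} between Cartan graphs and infinite frieze patterns, and the one of Theorem \ref{thm:bijection} between infinite frieze patterns and $p$-angulations of strips. Concretely, we read the statement as concerning connected, simply connected Cartan graphs of type $\Lambda_p$ of rank two with infinitely many vertices that permit a root system, taken up to equivalence. These are the hypotheses under which Theorem \ref{thm:cartanfriezep} applies. A non-simply-connected connected Cartan graph of rank two with infinitely many vertices is an infinite string with one end carrying a loop. Such graphs can be handled, or excluded, by the same reasoning as in the proof of Lemma \ref{lemC3}, and we will note this restriction explicitly.

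The steps are as follows. First, given such a Cartan graph $\Cc$, take its quiddity sequence at an object $a_0$ (Definition \ref{def:quiddity}). By Theorem \ref{thm:cartanfriezep}, this sequence is the quiddity row of an infinite frieze pattern $\mathcal{F}$ of type $\Lambda_p$, and $\mathcal{F}$ is well defined up to equivalence, i.e. up to shifting and reversing the quiddity sequence (Definition \ref{def:infinitedihedral}). Second, Theorem \ref{thm:pangulation} together with Theorem \ref{thm:bijection} yields a locally finite $p$-angulation $\mathcal{T}$ of the infinite strip. The quiddity entries of $\mathcal{F}$ are $\lambda_p$ times the number of $p$-angles at the lower vertices $i^I$, and $\mathcal{T}$ is unique up to translational renumbering of the upper vertices. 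Third, we check that the equivalences correspond. A shift $i\mapsto i+1$ of the quiddity sequence corresponds to translating the lower line. A reversal corresponds to reflecting the strip, which means reading the lower line backwards while keeping the upper line above. Accordingly, the parametrizing set is the set of locally finite $p$-angulations modulo translations and reflection of the strip, or, equivalently, labelled $p$-angulations together with a choice of base object $a_0$.

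The main obstacle is this bookkeeping of equivalences. Theorem \ref{thm:bijection} is stated for frieze patterns themselves, whereas Theorem \ref{thm:cartanfriezep} works with equivalence classes of Cartan graphs. We need to verify that the $\Dih_\infty$-action on quiddity sequences matches exactly the action of shifting and reflecting the strip. A shift moves the lower vertex labels, and Algorithm \ref{alg:frieze} is clearly equivariant under this. Under reversal, Algorithm \ref{alg:frieze} is symmetric in $i$ and $j$, so reflecting the strip sends the frieze with entries $f_{i,j}$ to the one with entries $f_{-j,-i}$, whose quiddity sequence is the reversed one. With these two checks the composite correspondence is a well-defined bijection, and this is the parametrization claimed.
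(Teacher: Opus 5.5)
Your proposal is correct and follows the paper's route. The paper states this corollary without separate proof, as the composite of Theorem \ref{thm:cartanfriezep} and Theorem \ref{thm:bijection}, tacitly for connected, simply connected Cartan graphs up to equivalence; your matching of the $\Dih_\infty$-action with translations and reflections of the strip only makes explicit what the paper leaves implicit.

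One thing to fix is your aside that half-infinite strings with a loop might be excluded by the reasoning of Lemma \ref{lemC3}. They cannot: such a string carrying the constant matrices of Example \ref{ex:root2} permits a root system by the same check as the one following Example \ref{ex:root2cont}, and it has the same quiddity sequence as its simply connected cover. So simple connectivity is a genuine hypothesis needed for injectivity, to be imposed as you end up doing, not something derivable from Lemma \ref{lemC3}.
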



\end{document}